\let\csname equation*\endcsname\relax
\let\csname endequation*\endcsname\relax
\newcommand{\R}{{\mathbb R}}
\newcommand{\D}{{\cal D}}
\newcommand{\Id}{\mbox{\it Id}}
\renewcommand{\d}{\partial}
\newcommand{\eps}{\varepsilon}
\def\hat{\widehat}
\def\tilde{\widetilde}
\def\bfo{\begin {eqnarray*} }
\def\efo{\end {eqnarray*} }
\def\ba{\begin {eqnarray*} }
\def\ea{\end {eqnarray*} }
\def\beq{\begin {eqnarray}}
\def\eeq{\end {eqnarray}}
\def\supp{\hbox{supp}\,}
\def\dist{\hbox{dist}}
\newcommand{\pa}{\partial}
\def\RR{{\mathbb R}}
\def\RR{{\mathbb R}}
\renewcommand{\r}[1]{(\ref{#1})}
\newcommand{\be}[1]{\begin{equation}\label{#1}}
\newcommand{\ee}{\end{equation}}
\renewcommand{\d}{\mathrm{d}}
\newcommand{\bo}{\partial M}
\newcommand{\Mint}{M^\text{\rm int}}
\newcommand{\CI}{C^\infty}
\newcommand{\ep}{\epsilon}
\newcommand{\scl}{{\mathrm{sc}}}
\newcommand{\Psisc}{\Psi_\scl}
\newcommand{\cM}{\mathcal M}
\newcommand{\loc}{{\mathrm{loc}}}
\newtheorem{lemma}{Lemma}[section]
\newtheorem{prop}[lemma]{Proposition}
\newtheorem{thm}[lemma]{Theorem}
\newtheorem{cor}[lemma]{Corollary}
\newtheorem*{thm*}{Theorem}
\newtheorem*{prop*}{Proposition}
\newtheorem*{cor*}{Corollary}
\newtheorem*{conj*}{Conjecture}
\numberwithin{equation}{section}
\theoremstyle{remark}
\newtheorem{rem}[lemma]{Remark}
\newtheorem*{rem*}{Remark}
\theoremstyle{definition}
\newtheorem{Def}[lemma]{Definition}
\newtheorem*{Def*}{Definition}
\newcommand{\Tsc}{{}^{\scl}T}
\newcommand{\Sym}{\mathrm{Sym}}
\renewcommand{\r}[1]{(\ref{#1})} 
\newcommand\Cx{\mathbb{C}}
\newcommand\foliation{\mathsf{x}}
\newcommand\level{\mathsf{c}}
\newcommand\Omegaext{\hat\Omega}
\renewcommand{\d}{\mathrm{d}}
 \newcommand{\zero}{^{(0)}}
\renewcommand{\phi}{\varphi}
\title{Journey to the Center of the Earth}
\author{Gunther Uhlmann\footnote{Department of Mathematics, University of Washington, Seattle, WA 98195-4350, USA; Department of Mathematics, University of Helsinki, Helsinki, Finland; HKUST Jockey Club Institute for Advanced Study, HKUST, Clear Water Bay, Kowloon, Hong Kong, China. {\em Email:} gunther@math.washington.edu} and Hanming Zhou\footnote{Department of Pure Mathematics and Mathematical Statistics, University of Cambridge, Cambridge CB3 0WB, United Kingdom. {\em Email:} hz318@dpmms.cam.ac.uk}}
\date{\vspace{-1cm}}
\begin{document}

\maketitle

\begin{abstract}
We survey some results on travel time tomography. The question is whether
we can  determine the anisotropic index of refraction of a medium by measuring the travel times of waves going through the medium. This can be recast as  geometry problems, the boundary rigidity problem and the lens rigidity problem. The
boundary rigidity problem is whether we can  determine a Riemannian metric of
a compact Riemannian manifold with boundary by measuring the distance function between boundary points. The lens rigidity problem problem is to determine a Riemannian
metric of a Riemannian manifold with boundary by measuring for every point and direction of entrance of a geodesic the point of exit and direction of exit and its length. The linearization of these two problems is tensor tomography. The question is whether one can determine a symmetric two-tensor from its integrals along geodesics. We emphasize recent results on boundary and lens rigidity
and in tensor tomography in the partial data case.
\end{abstract}

\section{Introduction}

The question of determining the sound speed or index of refraction
of a medium by measuring the first arrival times of waves arose in
geophysics in an attempt to determine the substructure of the
Earth by measuring at the surface of the Earth the travel times of
seismic waves. An early success of this inverse method was the
estimate by Herglotz \cite{Her} and Wiechert and Zoeppritz \cite{WZ} of the
diameter of the Earth and the location of the mantle, crust and
core. The assumption used in those papers is that the index of
refraction (which is inverse proportional to the speed) depends only on the radius. A more
realistic model is to assume that it depends on position, the case of an heterogeneous medium. The
travel time tomography problem can be formulated mathematically as
determining a Riemannian metric on a bounded domain (the Earth)
given by $ds^2= \frac{1}{c^2(x)}dx^2$, where $c$ is a positive
function, from the length of geodesics (travel times) joining
points in the boundary.

More recently it has been realized,  by measuring the travel times
of seismic waves, that the inner core of the Earth exhibits
anisotropic behavior, that is the speed of waves depends also on
direction there with the fast direction parallel to the Earth's
spin axis \cite{Cre}. 
Given the complications presented by
modeling the Earth as an anisotropic elastic medium we consider a
simpler model of anisotropy, namely that the wave speed is given
by a symmetric, positive definite matrix $g=(g_{ij})(x),$ that is,
a Riemannian metric in mathematical terms. The problem is to
determine the metric from the lengths of geodesics joining points
in the boundary (the surface of the Earth in the motivating
example). Other applications of travel time tomography are to imaging the Sun's interior \cite{Ko}, medical imaging \cite{SW}  and to ocean acoustics \cite{MW}
to
name a few. 

A general and geometric formulation of the travel time tomography problem is 
the question of whether given a compact Riemannian
manifold with boundary one can determine the Riemannian metric
in the interior knowing the lengths of geodesics joining points on the
boundary, i.e. the boundary distance function. This is a problem
that also appears naturally in rigidity questions in Riemannian geometry and it is known as the 
{\em boundary rigidity problem}. 
Notice that the boundary distance function is unchanged under any isometry which fixes the
boundary, thus one can only expect to recover the metric up to this natural obstruction.

The boundary distance function takes into account only length minimizing geodesics, one
can consider the behavior of all geodesics going through the manifold. This induces another type of rigidity problems: the {\em lens rigidity problem} and {\em scattering rigidity problem}, which concerns the determination of a Riemannian metric up to the natural obstruction, from the scattering relation or lens data. The scattering relation, introduced by Guillemin \cite{G}, is a map which sends the point and direction of entrance of a geodesic to point and direction of exit. The scattering relation together with information of lengths of geodesics gives the lens data. Again, lens data is unchanged under an isometry fixing the boundary.

There is another closely related problem, the {\em geodesic X-ray transform}, where
one integrates a function or a tensor field along geodesics of a Riemannian metric. The integration of a function along geodesics is
the linearization of the boundary rigidity problem and lens rigidity problem in a fixed conformal class. The standard
X-ray transform \cite{Hel}, where one integrates a function along straight lines, corresponds to
the case of the Euclidean metric and is the basis of medical imaging techniques such as
CT and PET. The case of integration along a general geodesic arises in geophysical and
ultrasound imaging. The case of integrating tensors of order two along geodesics, also
known as {\em deformation boundary rigidity}, is the linearization of the general boundary rigidity problem and lens rigidity problem. One important inverse problem for the geodesic X-ray transform is whether one can recover a function or a tensor field from its integrals over geodesics, this is the {\sl tensor tomography} problem.. We review in this article some recent results on the boundary and lens rigidity problem as well as tensor tomography when one has data on part of the boundary, the partial data problem \cite{UV2}, \cite{SUV_localrigidity}, \cite{SUV}. The partial data results have led to new global results for the lens rigidity problem.  

In section 2 we review results on the boundary rigidity problem with data on the whole boundary. This is mostly based on \cite{PU} and \cite{StU4}. In section 3 we review results on tensor tomography and in section 4, lens rigidity with full data.
In section 5 we consider partial data for boundary and lens rigidity and tensor tomography, in particular we give new results on the lens rigidity problem with full data.

In this paper we only consider the case of {\sl transmission} tomography.  For the case of {\sl reflection} tomography see for instance \cite{KLU} and \cite{CQUZ}.
\medskip

\noindent{\bf Acknowledgement:}
GU was partly supported by NSF and HZ was supported by EPSRC grant EP/M023842/1.
GU takes this opportunity to thank his collaborators M. Lassas, G. Paternain, L. Pestov, M. Salo, V. Sharafutdinov, P. Stefanov and A. Vasy who have enriched tremendously his understanding of the subject of this paper. HZ thanks G. Paternain and M. Salo for many helpful discussions on related topics.





\section{Boundary rigidity in the full data case}

In this section we formulate precisely the boundary rigidity problem and survey some of the main results. 

Let $(M,g)$ be a compact Riemannian manifold with boundary
$\partial M$. Let $d_g(x,y)$ denote the geodesic distance between
$x$ and $y$, two points in the boundary. This is defined as the
infimum of the length of all sufficiently smooth curves joining
the two points. The function $d_g$ measures the first arrival time
of waves joining points of the boundary. One of the inverse problems we
discuss in this section is whether we can determine the Riemannian
metric $g$ knowing $d_g(x,y)$ for any $x\in\partial M$, $y\in
\partial M$.  This problem also
arose in rigidity questions in Riemannian geometry \cite{Mi},
\cite{Cro}, \cite{Gr}.
The metric $g$ cannot be determined from this information alone.
We have $d_{\psi^*g}=d_g$ for any diffeomorphism $\psi: M\to M$
that leaves the boundary pointwise fixed, i.e., $\psi|_{\partial
M}=\Id$, where $\Id$ denotes the identity map and $\psi^*g$ is the
pull-back of the metric $g$. The natural question is whether this
is the only obstruction to unique identifiability of the metric.
It is easy to see that this is not the case.  Namely one can
construct a metric $g$ and find a point $x_0$ in $M$ so that
$d_g(x_0, \partial M)> \hbox{ sup }_{x,y \in \partial M}
d_g(x,y)$. For such a metric, $ d_g $ is independent of a change
of $ g $ in a neighborhood of $ x_0 $.  The hemisphere of the
round sphere is another example.
Therefore it is necessary to impose some a-priori restrictions on
the metric. One such restriction is to assume that the Riemannian
manifold $(M,g)$ is {\em simple}, i.e., 
any geodesic has no conjugate points and $\partial M$ is strictly
convex. $\partial M$ is strictly convex if the second fundamental
form of the boundary is positive definite at every boundary point.
R.~Michel conjectured in \cite{Mi} that simple manifolds are
boundary distance rigid, that is $d_g$ determines $g$ uniquely up
to an isometry which is the identity on the boundary. This is
known for simple subspaces of Euclidean space (see \cite{Gr}),
simple subspaces of an open hemisphere in two dimensions (see
\cite{Mi1} ), simple subspaces of symmetric spaces of constant
negative curvature \cite{BCG}, simple two dimensional spaces of
negative curvature (see \cite{Cro1} or \cite{O}). If one metric is close
to the Euclidean metric boundary rigidity was proven in \cite{LSU} that was improved in \cite{BI}. We remark that
simplicity of a compact manifold with boundary can be determined
from the boundary distance function.
In the case that both $g_1$ and $g_2$ are conformal to the
Euclidean metric $e$ (i.e., $(g_k)_{ij}= \alpha_k\delta_{ij}$,
$k=1,2$ with $\delta_{ij}$ the Kronecker symbol), as mentioned
earlier,  the problem we are considering here is known in
seismology as the inverse kinematic problem. In this case, it has
been proven by Mukhometov in two dimensions \cite{Mu} that if
$(M,g_i), i=1,2$ is simple and $d_{g_1}=d_{g_2}$, then $g_1=g_2$.
More generally the same method of proof shows that if $(M, g_i),
i=1,2,$ are simple compact Riemannian manifolds with boundary and
they are in the same conformal class then the metrics are
determined by the boundary distance function. More precisely we
have:
\begin{thm}\label{mukhometov}
Let $(M,g_i), i=1,2$ be simple compact Riemannian manifolds with
boundary of dimension $n\ge 2$. Assume $g_1=\rho g_2$ for a positive, smooth function
$\rho , \rho|_{\partial M}=1$ and $d_{g_1}=d_{g_2}$ then
$g_1=g_2$.
\end{thm}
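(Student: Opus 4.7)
My plan. I would follow Mukhometov's strategy, which converts the distance-matching hypothesis into a pointwise bound on the conformal factor via an energy identity for the eikonal equation.

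\textbf{Step 1 (Eikonal equation and pointwise Cauchy--Schwarz).} For each fixed $y\in\partial M$, simplicity of $(M,g_i)$ guarantees that the function $u_i(x,y):=d_{g_i}(x,y)$ is smooth on $M\setminus\{y\}$ and satisfies the eikonal equation $|\nabla_x u_i(x,y)|_{g_i}^2=1$. Rewriting both eikonal equations in the common reference metric $g_2$, using $g_1=\rho g_2$:
\[
|\nabla_x u_1(x,y)|_{g_2}^2 = \rho(x), \qquad |\nabla_x u_2(x,y)|_{g_2}^2 = 1.
\]
Cauchy--Schwarz then yields the pointwise lower bound
\[
|\nabla_x u_1 - \nabla_x u_2|_{g_2}^2 \;=\; \rho + 1 - 2\langle \nabla u_1,\nabla u_2\rangle_{g_2} \;\geq\; \bigl(\sqrt{\rho(x)}-1\bigr)^2,
\]
with equality iff $\nabla u_1$ and $\nabla u_2$ are positively parallel at $x$.

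\textbf{Step 2 (Cauchy-data match on $\partial M$).} The hypothesis $d_{g_1}=d_{g_2}$ gives $u_1=u_2$ on $\partial M\times\partial M$, so the tangential gradients of $u_1$ and $u_2$ in $x$ agree there. Since $\rho|_{\partial M}=1$, we have $g_1|_{\partial M}=g_2|_{\partial M}$, and the eikonal equation restricted to the boundary,
\[
(\partial_\nu u_i)^2 \;=\; 1 - |\nabla^T u_i|^2_{g|_{\partial M}},
\]
forces $|\partial_\nu u_1|=|\partial_\nu u_2|$; the signs coincide by the strict convexity of $\partial M$ (the inward normal derivative of the boundary distance has a definite sign for $x\neq y$). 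Hence $\nabla_x(u_1-u_2)\equiv 0$ on $\partial M\times\partial M$.

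\textbf{Step 3 (Energy identity and conclusion).} Integrate the pointwise inequality of Step 1 over $(x,y)\in M\times\partial M$ against Mukhometov's weight $w(x,y)$, built from the Jacobian of geodesic polar coordinates on $(M,g_2)$ centered at $y$. An integration by parts in $x$, combined with the Cauchy-data match of Step 2 (which eliminates the boundary contribution), produces an upper bound of $0$ on the resulting energy integral, while the Cauchy--Schwarz lower bound of Step 1 gives a positive multiple of $\int_M(\sqrt\rho-1)^2\,dV_{g_2}$. Therefore
\[
\int_M \bigl(\sqrt{\rho(x)}-1\bigr)^2 \, dV_{g_2}(x) \;\leq\; 0,
\]
forcing $\rho\equiv 1$ and hence $g_1=g_2$.

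\textbf{Main obstacle.} The delicate technical point is Step 3: engineering the weight $w(x,y)$ so that the integration by parts yields only boundary terms (which vanish by Step 2) on one side and the Cauchy--Schwarz-generated expression on the other. This is Mukhometov's central contribution; it relies crucially on simplicity of $(M,g_2)$, which guarantees that $u_2(\cdot,y)$ is smooth on $M\setminus\{y\}$ and that geodesic polar coordinates centered at any boundary point cover $M$ without singularities, so that $w$ is globally well-defined.
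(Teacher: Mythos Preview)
The paper does not give its own proof of this theorem: immediately after the statement it simply says ``This result and a stability estimate were proven in \cite{Mu},'' and moves on. So there is no in-paper argument to compare your proposal against; the survey is content to cite Mukhometov's original work.

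As for your sketch itself: Steps~1 and~2 are standard and correct. Step~3 is where the actual content lies, and you are candid that this is the obstacle. What you wrote there is too schematic to count as a proof---in particular, the integration is really over $\partial M\times\partial M$ (parametrizing geodesics by their endpoints), not $M\times\partial M$, and the ``Mukhometov weight'' and the precise integration-by-parts identity that makes the argument close are not things one can wave at; they are the theorem. Your identification of the difficulty is accurate, but the proposal as written defers the entire non-trivial part to a black box. If you want to turn this into an actual proof you would need to write down the explicit energy identity (in modern language these are often presented via Santal\'o's formula and the Pestov identity) and check the sign of each term.
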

This result and a stability estimate were proven in \cite{Mu}. We
remark that in this case the diffeomorphism $\psi$ that is present
in the general case must be the identity if the metrics are
conformal to each other. For related results and generalizations
see \cite{B}, \cite{BG}, \cite{Cro}, \cite{GN}, \cite{MuR}.




In \cite{PU} it was proven Michel's conjecture in the two dimensional case:

\begin{thm}\label{pestovuhlmann}
Let $(M,g_i), i=1,2$ be two dimensional simple compact Riemannian
manifolds with boundary. Assume
$$d_{g_1}(x,y)=d_{g_2}(x,y)
\quad   \forall (x,y)\in\partial M\times\partial M.
$$
Then there exists a diffeomorphism $\psi:  M\to M$,
$\psi|_{\partial M}=Id$, so that
$$g_2=\psi^*g_1.$$
\end{thm}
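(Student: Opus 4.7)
The plan is to reduce the general two-dimensional case to the conformal case covered by Theorem~\ref{mukhometov} by producing a diffeomorphism $\psi: M \to M$ fixing the boundary such that $\psi^*g_2$ is conformal to $g_1$. I would accomplish this in three stages, extracting successively stronger information from $d_g$.

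First, I recover the scattering relation $\alpha_g$ and the restriction of $g$ to $\partial M$ from $d_g$. On a simple surface $d_g$ is smooth off the diagonal of $\partial M \times \partial M$; its tangential derivatives along $\partial M$ yield the tangential components of the entry and exit directions of the minimizing geodesic joining two boundary points, and strict convexity of $\partial M$ pins down the normal components. One therefore reads off the full scattering relation together with all geodesic lengths.

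Second, and this is the heart of the argument, I would upgrade the scattering relation to the Dirichlet-to-Neumann map $\Lambda_g$ of the Laplace-Beltrami operator $\Delta_g$. The key observation, special to two dimensions, is the Cauchy--Riemann relation $\partial_T v = \partial_\nu u_f$ linking a harmonic function $u_f$ with boundary value $f$ to its harmonic conjugate $v$; thus it suffices to recover $v|_{\partial M}$ from scattering data. I would work on the unit tangent bundle $SM$, lift $f$ to a function on $SM|_{\partial M}$, solve a transport equation along the geodesic flow with this boundary source, and use the scattering relation to express the value at the opposite endpoint of each geodesic as an integral of known quantities along that geodesic. The feasibility of this construction rests on a surjectivity statement for the geodesic X-ray transform acting on fiberwise Fourier modes on $SM$, proved via a Pestov-type energy identity together with holomorphic integrating factors, both of which are available because $(M,g)$ is a simple surface.

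Third, with $\Lambda_{g_1} = \Lambda_{g_2}$ in hand, the two-dimensional theorem of Lassas-Uhlmann on the D-to-N map furnishes a diffeomorphism $\psi: M \to M$ with $\psi|_{\partial M} = \Id$ and a positive smooth function $\rho$ with $\rho|_{\partial M} = 1$ such that $\psi^*g_2 = \rho g_1$. Pull-back by $\psi$ preserves boundary distances, so $d_{\rho g_1} = d_{g_1}$, and Theorem~\ref{mukhometov} then forces $\rho \equiv 1$, completing the proof. The main obstacle is the second stage: extracting an elliptic boundary operator from purely Lagrangian (geodesic) information. The Cauchy--Riemann bridge between harmonic conjugates is essentially two-dimensional, and the surjectivity statement for the ray transform that underlies the construction depends decisively on simplicity, so the method does not obviously extend to higher dimensions or to non-simple surfaces; stages one and three are comparatively routine given the scattering relation and the Lassas-Uhlmann theorem respectively.
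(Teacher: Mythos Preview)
Your outline is correct and is precisely the route the paper indicates: the paper does not reproduce the argument but states that the proof ``involves a connection between the scattering relation \ldots\ and the Dirichlet-to-Neumann map (DN) associated to the Laplace-Beltrami operator,'' referring to \cite{U1} for a sketch. Your three stages---recovering the scattering relation from $d_g$, passing from the scattering relation to $\Lambda_g$ via harmonic conjugates and the fiberwise analysis on $SM$, and then invoking the 2D Calder\'on-type result together with Theorem~\ref{mukhometov}---match the original Pestov--Uhlmann strategy faithfully.
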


The proof of Theorem \ref{pestovuhlmann} involves a connection between the
scattering relation defined in section 4 and the Dirichlet-to-Neumann map (DN)
associated to the Laplace-Beltrami operator \cite{U1}. 
define the scattering relation.
A sketch of the proof of \ref{pestovuhlmann} can be found in \cite{U1}



\section{\bf Boundary Rigidity and Tensor Tomography}

We review here the general results obtained in \cite{StU1} for boundary rigidity and tensor tomography.

It was shown in \cite{Sh} that the linearization of the boundary rigidity problem is given by the following integral geometry problem: recover a symmetric tensor of order 2, which in any coordinate system is given by
 $f=(f_{ij})$, by the geodesic X-ray transform
$$
I_g f(\gamma) = \int f_{ij}(\gamma(t))\dot \gamma^i(t)\dot\gamma^j(t)\,\d t,$$
using the Einstein summation convention,
known for all geodesics $\gamma$ in $M$. In this section we denote by $I_g$ the geodesic X-ray transform of tensors of order two. It can be easily seen that $I_gd v=0$ for any vector field $v$ with $v|_{\partial M}=0$, where $d v$ denotes the symmetric differential
\begin{equation}   \label{dv}
[d v]_{ij} = \frac12\left(\nabla_iv_j+ \nabla_jv_i  \right),
\end{equation}
and $\nabla_k v$ denote the covariant derivatives of the vector field $v$. This is the linear version of the fact that  $d_g$ does not change on  $(\partial M)^2:=\partial M\times \partial M$ under an action of a diffeomorphism as above. The natural formulation of the linearized problem is therefore that $I_gf=0$ implies $f=dv$ with $v$ vanishing on the boundary. 
We will refer to this property as {\em s-injectivity} of $I_g$. 
More precisely, we have.
\begin{Def}
We say that $I_g$ is {\em s-injective} in $M$, if $I_{g}f=0$ and $f\in {L}^2(M)$ imply $f=d v$ with some vector field $v\in {H}_0^1(M)$.
\end{Def}
Any symmetric tensor $f\in L^2(M)$ admits an orthogonal decomposition $f = f^s+dv$ into a {\it solenoidal} and {\it potential} parts with $v\in H_0^1(M)$, and $f^s$ divergence free, i.e., $\delta f^s=0$, where $\delta$  is the adjoint operator to $-d$ given by $[\delta f]_i = g^{jk} \nabla_k f_{ij}$ \cite{Sh}. Therefore, $I_g$ is s-injective, if it is injective on the space of solenoidal tensors. 
The inversion of $I_g$ is a problem of independent interest 
in integral geometry, also called {\sl tensor tomography}.   We first survey the recent results on this problem. S-injectivity, respectively injectivity for 1-tensors (1-forms) and functions is known, see \cite{Sh} for references. 
S-injectivity of $I_g$  was proved 
in \cite{PS} for metrics with negative curvature, in \cite{Sh} for metrics with small curvature and in \cite{ShU} for Riemannian surfaces with no focal points. A conditional and non-sharp stability estimate for metrics with small curvature is also established in \cite{Sh}. In 
\cite{StU2},  stability estimates for s-injective metrics  were shown 
and sharp estimates about the recovery of a 1-form $f=f_j dx^j$ and a 
function $f$ from the associated $I_gf$ which is defined by
$$
I_g f(\gamma) = \int f_{i}(\gamma(t))\dot\gamma^i(t)\,\ dt.
$$
The stability estimates proven in \cite{StU2}  were used
to prove local uniqueness for the boundary rigidity problem near any simple metric $g$ with s-injective $I_g$. 
Similarly to \cite{Tr}, we say that $f$ is analytic in the set $K$ (not 
necessarily open), if it is real analytic in some neighborhood of $K$. 
The results that follow in this section are based on \cite{StU4}.
The first main result we discuss is about s-injectivity for  simple analytic metrics. 
\begin{thm}  \label{thm_an}
Let $g$ be a  simple,  real analytic metric in $M$. Then $I_g$ is s-injective.
\end{thm}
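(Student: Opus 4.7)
The plan is to apply analytic microlocal analysis to the normal operator $N_g = I_g^* I_g$, showing that it is an analytic pseudodifferential operator that is elliptic on solenoidal tensors, and then to use analytic continuation from outside $M$ to conclude that $f^s = 0$.

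First, extend $(M,g)$ to a slightly larger manifold $(\tilde M, g)$ where $g$ is extended as a real analytic simple metric; this is possible because $g$ is real analytic up to the strictly convex boundary and hence extends uniquely across $\partial M$ in a small neighborhood. Given $f \in L^2(M)$ with $I_g f = 0$, extend $f$ by zero to $\tilde f \in L^2(\tilde M)$. Strict convexity of $\partial M$ ensures that every geodesic of $\tilde M$ between two points of $\partial \tilde M$ intersects $M$ in at most one subsegment, and that subsegment is itself a $g$-geodesic of $M$ joining two boundary points. The integral of $\tilde f$ along such a geodesic thus reduces to a value of $I_g f$, which is zero, so $N_g \tilde f = 0$ on the interior of $\tilde M$.

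Second, analyze $N_g$ microlocally. Using the exponential map one writes the Schwartz kernel of $N_g$ explicitly; real-analyticity of $g$ makes this kernel analytic off the diagonal, while near the diagonal it exhibits the classical conormal singularity of a $\Psi$DO of order $-1$. Hence $N_g$ is an analytic pseudodifferential operator on $\tilde M^{\mathrm{int}}$ in the Sjostrand--Treves sense. A direct computation shows that its principal symbol, acting on symmetric $2$-tensors, has kernel equal to the microlocal potential tensors (those of the form $\xi_i\eta_j+\xi_j\eta_i$), reflecting the identity $I_g d v = 0$. Therefore $N_g$ is elliptic modulo potential symbols, and the analytic $\Psi$DO calculus yields a left parametrix $Q$ satisfying $Q N_g = S - R$ on $\tilde M^{\mathrm{int}}$, where $S$ is the solenoidal projection on $\tilde M$ and $R$ is an analytically regularizing operator.

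Finally, combine analyticity with unique continuation. Applying $Q N_g = S - R$ to $\tilde f$ and using $N_g \tilde f = 0$ gives $\tilde f^s = R \tilde f$, so $\tilde f^s$ is real analytic on the connected set $\tilde M^{\mathrm{int}}$. One arranges the decomposition $\tilde f = \tilde f^s + d\tilde v$ so that $\tilde f^s$ vanishes on an open subset of $\tilde M \setminus \overline M$, for instance by choosing $\tilde v$ to solve the natural elliptic boundary value problem with zero source outside $M$; real-analyticity then forces $\tilde f^s \equiv 0$ on all of $\tilde M^{\mathrm{int}}$, and restricting to $M$ gives $f = d v$ with $v \in H_0^1(M)$, proving s-injectivity. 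The main obstacle is precisely this interplay between the solenoidal decomposition and the boundary: the projection $S$ depends on the ambient manifold, so producing a global decomposition of $\tilde f$ whose solenoidal part genuinely inherits the vanishing of $\tilde f$ outside $M$ is delicate and must be coordinated with the analytic parametrix construction in the presence of the potential kernel; this coupling is the technical heart of the argument.
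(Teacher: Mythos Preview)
The paper itself is a survey and does not give a proof of this theorem; it merely records the result and cites \cite{StU4}. Your outline is indeed the strategy used there: extend $g$ analytically to a slightly larger simple manifold $\tilde M$, extend $f$ by zero, observe that $N_g=I_g^*I_g$ is an analytic pseudodifferential operator of order $-1$ whose principal symbol is elliptic on the orthogonal complement of potential tensors, and then combine an analytic parametrix with unique continuation. So the architecture is right.

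There is, however, a genuine gap at precisely the place you flag as ``the technical heart.'' Your sentence ``one arranges the decomposition $\tilde f=\tilde f^s+d\tilde v$ so that $\tilde f^s$ vanishes on an open subset of $\tilde M\setminus\overline M$, for instance by choosing $\tilde v$ to solve the natural elliptic boundary value problem with zero source outside $M$'' does not work as stated. The solenoidal projection on $\tilde M$ is \emph{uniquely} determined by the elliptic boundary value problem $\delta d\tilde v=\delta\tilde f$, $\tilde v|_{\partial\tilde M}=0$; there is no freedom to ``arrange'' anything. The source $\delta\tilde f$ does vanish outside $M$, but the solution $\tilde v$ of this global Dirichlet problem is nonlocal and in general does not vanish there, so neither does $\tilde f^s=-d\tilde v$. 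Thus the analytic continuation step, as you have written it, has no open set on which $\tilde f^s$ is known to vanish. In \cite{StU4} this is handled by an additional ingredient you have omitted: a boundary-determination lemma showing, from the vanishing of $If$ on short geodesics near $\partial\tilde M$ (equivalently, from $\tilde f=0$ near $\partial\tilde M$), that the full jet of $\tilde f^s$ at $\partial\tilde M$ is zero. That lemma, combined with the analyticity of $\tilde f^s$ established by the parametrix, is what forces $\tilde f^s\equiv 0$; one then passes from $\tilde f^s=0$ on $\tilde M$ back to the statement $f=dv$ with $v\in H_0^1(M)$. Without this boundary-determination step your argument is incomplete.
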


By proving an stability estimates in \cite{StU1} it was shown the following generic result:

\begin{thm} \label{thm_gen} There exists $k_0$ such that for each $k\ge k_0$,  the set $\mathcal{G}^k(M)$ of simple  $C^k\!(M)$  metrics in $M$ for which $I_g$ is s-injective  is  open and dense in the $C^k\! (M)$ topology.  
\end{thm}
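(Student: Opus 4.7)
My plan is to prove Theorem \ref{thm_gen} in two parts: density using Theorem \ref{thm_an} together with a real-analytic approximation argument, and openness using a stability estimate for the normal operator $N_g = I_g^* I_g$. The threshold $k_0$ is chosen so that the pseudodifferential calculus needed for $N_g$ depends continuously on the metric in $C^k$.

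For density, I would argue as follows. Simplicity is a $C^2$-open condition on the metric: strict convexity of $\partial M$ is a positivity condition on the second fundamental form, and absence of conjugate points on geodesics of bounded length is preserved under small $C^2$ perturbations since the conjugate-point equation depends continuously on $g$ in this topology. Next, real-analytic metrics are dense in $C^k(M)$ (for instance by a partition of unity and local convolution-with-Gaussian regularization, or by embedding and restricting an analytic approximation of a smooth extension). Given a simple $g \in C^k(M)$, take analytic approximations $g_j \to g$ in $C^k$; for $j$ large they remain simple, and by Theorem \ref{thm_an} each $I_{g_j}$ is s-injective. Hence $\mathcal{G}^k(M)$ is dense.

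For openness, the main step is to establish, for each $g \in \mathcal{G}^k(M)$, a stability estimate of the form
\begin{equation*}
\|f^s\|_{L^2(M)} \leq C_g\, \|N_g f\|_{\tilde H^s(M_1)}
\end{equation*}
on a slightly larger extension $(M_1,g_1) \supset (M,g)$, for tensors $f$ with suitable support. The route is: embed $(M,g)$ into an open simple manifold and use that $N_g$ is a classical pseudodifferential operator of order $-1$ which is elliptic on the solenoidal part (this is the analog for 2-tensors of the elliptic parametrix used in geodesic X-ray transforms on simple manifolds). Constructing a parametrix $P_g$ yields $P_g N_g = S_g - K_g$ on solenoidal tensors, where $S_g$ is the solenoidal projection and $K_g$ is compact. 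S-injectivity of $I_g$ means $I - K_g$ has trivial kernel on solenoidal tensors, so by Fredholm theory it is invertible, giving the stability estimate above.

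The crucial payoff is that all ingredients — the parametrix $P_g$, the projector $S_g$, and hence $K_g$ — depend continuously on $g$ in the operator norm of interest, provided $g$ varies in $C^k$ for $k$ large enough (here $k_0$ appears: enough derivatives to control the symbol expansion and remainder of $N_g$ uniformly). Invertibility of $I - K_g$ is an open condition, so for $g'$ sufficiently close to $g$ in $C^k$, the perturbed operator $I - K_{g'}$ remains invertible, and the same stability estimate holds with a slightly worse constant. In particular $I_{g'}f=0$ for an $L^2$ tensor forces $f^s = 0$, i.e.\ $g' \in \mathcal{G}^k(M)$. Combined with the fact that simplicity itself is $C^k$-open, this proves openness. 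The main obstacle is the uniform-in-$g$ control of the pseudodifferential parametrix and its smoothing remainder, which dictates how large $k_0$ must be; once this is set up, the Fredholm/perturbation argument is routine.
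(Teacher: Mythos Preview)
Your proposal is correct and follows essentially the same approach as the paper: density via Theorem~\ref{thm_an} and real-analytic approximation, openness via a stability estimate for the normal operator $N_g=I_g^*I_g$ obtained from its ellipticity on solenoidal tensors and a parametrix whose ingredients depend continuously on $g$ in $C^k$ for $k$ large. The paper itself only records that the result follows from the stability estimates of \cite{StU4}, and your outline is precisely that argument; the one place to be slightly more careful is the passage from s-injectivity of $I_g$ to triviality of the kernel of $I-K_g$, which in \cite{StU4} is handled by the compactness--uniqueness argument upgrading the a~priori estimate $\|f^s\|_{L^2}\le C(\|N_g f\|+\|f^s\|_{H^{-s}})$ to $\|f^s\|_{L^2}\le C\|N_g f\|$ rather than by direct invertibility of $I-K_g$.
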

Of course, ${\mathcal G}^k(M)$ includes all real analytic simple metrics in $M$, according to Theorem~\ref{thm_an}.

Theorem~\ref{thm_gen} 
allows us to  prove  the following local generic uniqueness result for the non-linear boundary rigidity problem.
\begin{thm} \label{thm_rig}
Let $k_0$ and $\mathcal{G}^k(M)$ be as in Theorem~\ref{thm_gen}.
There exists $k\ge k_0$, such that for any $g_0\in \mathcal{G}^k$, there is $\varepsilon>0$, such that for any two metrics $g_1$, $g_2$ with
$\|g_m-g_0\|_{C^k(M)}\le \varepsilon$, $m=1,2$, we have the following:
\begin{equation}   \label{pro}
\mbox{$d_{g_1}=d_{g_2}$  on $(\partial M)^2$ \ implies  $g_2=\psi_* g_1$}
\end{equation}
with some $C^{k+1}(M)$-diffeomorphism $\psi:M \to M$ \ fixing the boundary.
\end{thm}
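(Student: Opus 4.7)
The plan is to reduce the nonlinear equality $d_{g_1}=d_{g_2}$ to a small perturbation of the linear equation $I_{g_0}f=0$, and then to close the argument using the s-injectivity and an associated stability estimate for $I_{g_0}$ that are guaranteed by $g_0\in\mathcal G^k$. Three ingredients are required: (i) a gauge-fixing diffeomorphism $\psi$ which puts $\psi^*g_2-g_1$ into solenoidal form; (ii) a pseudo-linearization identity that converts the equality of distance functions into the vanishing of a weighted geodesic X-ray transform of $f:=\psi^*g_2-g_1$; and (iii) a stability estimate for $I_{g_0}$ that is uniform over a $C^k$-neighborhood of $g_0$---this uniformity is precisely what makes the set $\mathcal G^k(M)$ open in Theorem~\ref{thm_gen}, and it is what allows the perturbation to be absorbed.

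\textbf{Gauge fixing.} For $g_2$ close to $g_1$ in $C^k$, I would produce, via an implicit-function / inverse-function-theorem argument, a diffeomorphism $\psi:M\to M$ with $\psi|_{\partial M}=\Id$, close to the identity, such that $f:=\psi^*g_2-g_1$ is $g_1$-solenoidal ($\delta_{g_1}f=0$) and vanishes on $\partial M$. The linearization of this construction is governed by the elliptic operator $\delta_{g_1}d$ on vector fields in $H_0^1(M)$, so that the existence of a canonical solenoidal representative in each orbit of the boundary-fixing diffeomorphism group is exactly the Sharafutdinov decomposition $f=f^s+dv$ used in Section~3. This step also gives a bound $\|f\|_{C^{k-1}}\le C\|g_2-g_1\|_{C^k}$.

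\textbf{Pseudo-linearization and conclusion.} Next, using the pseudo-linearization formula of Stefanov--Uhlmann, I would write
\[
 d_{\psi^*g_2}^2(x,y)-d_{g_1}^2(x,y)=\int_{\gamma(x,y)} a_{ij}(z,\xi;g_1,g_2)\,f^{ij}(z)\,\d t,
\]
where $\gamma(x,y)$ is a bicharacteristic of a Hamiltonian interpolating between $|\xi|^2_{g_1}$ and $|\xi|^2_{\psi^*g_2}$ and the weight $a_{ij}$ reduces to $\xi_i\xi_j$ when $g_1=g_2$. Thus the hypothesis $d_{g_1}=d_{g_2}$ is equivalent to the vanishing on $(\partial M)^2$ of a weighted X-ray transform $\widetilde I_{g_1,g_2}f$. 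The stability estimate that underlies Theorem~\ref{thm_gen} (and whose existence for s-injective metrics was established in \cite{StU2}) reads
\[
 \|h^s\|_{L^2(M)}\le C\,\|I_{g_0}h\|_{\widetilde H^1},
\]
with a constant $C$ uniform for $g_0$ ranging in a small $C^k$-ball. Writing $\widetilde I_{g_1,g_2}=I_{g_0}+R$, where $\|R\|=O(\eps)$ by continuous dependence of the bicharacteristic flow and of the weights on $g_1,g_2$, applying the stability estimate to $f$, and using that $f$ is $g_1$-solenoidal (so that the mismatch between the $g_1$- and $g_0$-solenoidal parts is itself of order $\eps\,\|f\|$), I would obtain an inequality $\|f\|_{L^2}\le C\eps\|f\|_{L^2}$. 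Choosing $\eps$ small enough forces $f\equiv 0$, hence $\psi^*g_2=g_1$, which is exactly \r{pro}.

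\textbf{Main obstacle.} The most delicate point is making the pseudo-linearization quantitative, i.e.\ showing that $R=\widetilde I_{g_1,g_2}-I_{g_0}$ is bounded as an operator between exactly the Sobolev spaces in which the stability estimate is formulated, with operator norm controlled by $\|g_1-g_0\|_{C^k}+\|g_2-g_0\|_{C^k}$. This requires uniform control of the bicharacteristic flow of the mixed Hamiltonian and a careful matching of the two solenoidal gauges ($g_1$ versus $g_0$), the discrepancy between them being itself a divergence term that must be absorbed into the final estimate. The rest of the argument is then a purely perturbative absorption.
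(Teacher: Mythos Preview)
Your outline is essentially the Stefanov--Uhlmann argument from \cite{StU4} that the paper is surveying: solenoidal gauge fixing, the pseudo-linearization identity (which the paper records later as Lemma~\ref{SU-identity}), and the stability estimate for $I_{g_0}$ that underlies Theorem~\ref{thm_gen}. So the strategy is correct and matches the paper's intended proof.

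The one place where your sketch is too optimistic is the closing inequality $\|f\|_{L^2}\le C\eps\|f\|_{L^2}$. The stability estimate controls $\|f^s\|_{L^2}$ by a norm of $I_{g_0}f$ that involves a derivative, so a bound of the form $\|Rf\|\le C\eps\|f\|_{L^2}$ in that same norm is not available---estimating the remainder in the correct space costs a derivative of $f$. In \cite{StU4} this mismatch is handled not by a direct absorption but by an interpolation step: one first obtains an inequality of the type $\|f\|_{L^2}\le C\eps\,\|f\|_{C^l}$ for some finite $l$, then uses the a~priori $C^k$ bound on $g_1,g_2$ (together with boundary determination, which forces the full jet of $f$ to vanish on $\partial M$) to control the higher norms of $f$, and interpolates to close. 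This derivative loss is exactly why the statement requires some large $k\ge k_0$ rather than merely $k=2$, and it deserves to be listed alongside your ``main obstacle'' about the operator bound on $R$.
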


\section{Lens rigidity}


For 
non-simple manifolds in particular, if we have conjugate points or the boundary
is not strictly convex, we need to look at the behavior of all the geodesics and the
scattering relation encodes this information. 
We proceed to define
in more detail the scattering relation for non-convex manifolds
and the lens rigidity problem and state our results. 
We note that we will also consider the case of incomplete data, that is when we don't have information
about all the geodesics entering the manifold. More details can be found in \cite{StU4a}, \cite{StU5}.

The {scattering relation} 
\be{sig}
\Sigma : \partial_-SM \to  \overline{\partial_+SM}
\ee
is defined by $\Sigma(x,\xi) = (y,\eta) = \Phi^\ell(x,\xi)$, where $\Phi^t$ is the geodesic flow, and $\ell>0$ is the first moment, at which the unit speed geodesic through $(x,\xi)$ hits $\bo$ again. If such an $\ell$ does not exists, we formally set $\ell=\infty$ and  we call the corresponding initial condition and the corresponding geodesic {\it trapping}. This defines also $\ell(x,\xi)$ as a function  $\ell: \partial_-SM \to [0,\infty]$. Note that $\Sigma$ and $\ell$ are not necessarily continuous.

It is convenient to think of $\Sigma$ and $\ell$ as defined on the whole $\partial SM$ with $\Sigma=\Id$ and $\ell=0$ on $\overline{\partial_+SM}$. 
We parametrize the scattering relation in a way that makes it independent of pulling it back by diffeomorphisms fixing $\bo$ pointwise. Let $\kappa_\pm : \partial_\pm  SM \to B(\bo)$ be the orthogonal projection onto the (open) unit ball tangent bundle that extends continuously to the closure of $\partial_\pm  SM$. Then $\kappa_\pm$ are homeomorphisms, and we set 
\be{sig1}
\sigma = \kappa_+\circ \Sigma\circ \kappa^{-1}_- :\overline{B(\bo)} \longrightarrow \overline{B(\bo)}.
\ee
According to our convention, $\sigma=\Id$ on $\partial( \overline{B(\bo)} ) = S(\bo)$. We equip $\overline{B(\bo)}$ with the relative topo\-logy induced by $T(\bo)$, where neighborhoods of boundary points (those in $S(\bo)$) are given by half-neighborhoods, i.e., by neighborhoods in $T(\bo)$ intersected with  $\overline{B(\bo)}$.
It is possible to define $\sigma$ in a way that does not require knowledge of $g|_{T(\bo)}$ by thinking   of any boundary vector $\xi$ as characterized by its angle with $\bo$ and the direction of its tangential projection. 
Let $\D$ be an open subset  of $\overline{B(\bo)}$. A priori, the latter depends on $g|_{T(\bo)}$. By the remark above, we can think of it as independent of $g|_{T(\bo)}$ however.
The {\sl lens rigidity} problem we study  is the following:

\vspace{.05in} 
{\it Given $M$ and $\D$,  do $\sigma$ and  $\ell$, restricted to $\D$, determine $g$ uniquely, up to a pull back of a diffeomorphism that is identity on $\bo$?}
\vspace{.05in}

The answer to this question, even when $\D ={B(\bo)}$, is negative, see \cite{CK}. The known counter-examples are trapping manifolds. 
The boundary rigidity problem and the lens rigidity one are equivalent for simple metrics.

Vargo \cite{V} proved that real-analytic manifolds satisfying an additional mild condition are lens rigid.
Croke has shown that if a manifold is lens rigid, a finite quotient of it
is also lens rigid \cite{Croke04}.  He has also shown that the torus is lens
rigid \cite{Croke_scatteringrigidity}.  Of course the torus is trapping. P.~Stefanov and G. Uhlmann have shown lens rigidity locally
near a generic class of non-simple manifolds \cite{StU5}. In a recent very interesting work, Guillarmou \cite{Colin14} proved that in two dimensions, one can determine from the lens relation the conformal class of the metric if the trapped set is hyperbolic and there are no conjugate points. He also proved  deformation lens rigidity in higher dimensions under the same assumptions.


\subsection{Boundary determination of the jet of $g$}\label{boundary jet}

The lens rigidity in the real-analytic category was studied by Vargo \cite{V}. A key ingredient of the proof is a boundary determination result proved by Stefanov and Uhlmann in \cite{StU5} that we proceed to state. This result shows that one can determine all derivatives of $g$ on $\bo$ from the lens data under some non-conjugacy condition. The theorem is interesting by itself. Notice that $g$ below does not need to be analytic or generic.

\begin{thm}  \label{thm_jet}
Let $(M,g)$ be a compact Riemannian manifold with boundary. Let $(x_0,\xi_0)\in S(\partial M)$ be such that the maximal geodesic $\gamma_{x_0,\xi_0}$ through it is of finite length,  and assume that $x_0$ is not conjugate  to any point in $\gamma_{x_0,\xi_0}\cap \bo$.  If $\sigma$ and $\ell$ are known on some neighborhood of $(x_0,\xi_0)$, then  the jet of $g$ at $x_0$ in boundary normal coordinates is determined uniquely. 
\end{thm}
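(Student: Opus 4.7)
The plan is to work in boundary normal coordinates $(x', x^n)$ centered at $x_0$, in which $\partial M = \{x^n = 0\}$ locally, $M = \{x^n \geq 0\}$, and
\[
g = g_{\alpha\beta}(x', x^n)\, dx^\alpha\, dx^\beta + (dx^n)^2, \qquad g_{\alpha n} \equiv 0, \quad g_{nn} \equiv 1.
\]
The goal is to read every Taylor coefficient $\partial_{x'}^K \partial_n^j g_{\alpha\beta}(0)$ out of the lens data. I would do this by induction on $j$, reading the normal derivatives out of an expansion of the exit data near the tangent direction $\xi_0$, and then recovering the tangential derivatives at each stage by varying the base point of the initial condition along $\partial M$.

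The preliminary analytic input is that the hypothesis that $x_0$ is not conjugate to any point in $\gamma_{x_0,\xi_0}\cap \partial M$ makes the exit map smooth in a full neighborhood of $(x_0,\xi_0)$, up to and including strictly tangent vectors. Introduce coordinates $(x, v, s)$ on initial conditions near $(x_0,\xi_0)$, where $v$ is the tangential component and $s \geq 0$ the (small) inward normal component of the initial unit vector, so $s = 0$ corresponds to tangent vectors. Under the non-conjugacy assumption, $\sigma$ and $\ell$ admit Taylor expansions at $(x_0,\xi_0)$ in all three variables $(x, v, s)$, and every coefficient of these expansions is known from the given data.

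I then expand the geodesic equation in boundary normal coordinates. Since $\Gamma^n_{\alpha\beta} = -\tfrac{1}{2}\partial_n g_{\alpha\beta}$, iterating
\[
\ddot x^n = -\Gamma^n_{\alpha\beta}\, \dot x^\alpha \dot x^\beta, \qquad \ddot x^\alpha = -\Gamma^\alpha_{\mu\nu}\, \dot x^\mu \dot x^\nu,
\]
with initial data $x^n(0) = 0$, $\dot x^n(0) = s$, $\dot x^\alpha(0) = v^\alpha$ yields the order-$k$ Taylor coefficient of $t \mapsto \gamma(t)$ at $t = 0$ as an explicit polynomial in $(s, v)$ and in the jet $\{\partial_{x'}^K \partial_n^j g_{\alpha\beta}(0) : |K| + j \leq k - 1\}$. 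Matching these geodesic expansions with the known Taylor expansion of $(\sigma, \ell)$ in $(x, v, s)$, the next order in $s$ produces an algebraic equation in which $\partial_n^J g_{\alpha\beta}(0)$ appears linearly and the other unknowns are already determined by the inductive hypothesis. Polarizing in $v$ over a small open set of tangent vectors at $x_0$ supplies enough equations to isolate each $(\alpha, \beta)$ entry, and varying $x$ along $\partial M$ then recovers the tangential derivatives $\partial_{x'}^K \partial_n^J g_{\alpha\beta}(0)$. The base case $J = 0$ is handled by the leading terms of the same expansion, which encode the boundary metric.

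The main obstacle is verifying at each inductive step that the coefficient multiplying the new unknown $\partial_n^J g_{\alpha\beta}(0)$ is nondegenerate rather than vanishing identically. Tracking this coefficient reduces to the invertibility of a Jacobi-field problem along $\gamma_{x_0,\xi_0}$, with prescribed initial data at $x_0$ and matching conditions at the boundary intersections of $\gamma_{x_0,\xi_0}$; the non-conjugacy hypothesis is precisely what guarantees trivial kernel for this problem, so the inversion closes at every order.
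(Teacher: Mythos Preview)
The paper itself does not include a proof of this theorem; it is quoted from \cite{StU5} as a key ingredient for the analytic lens rigidity result, so there is no in-paper argument to compare against directly. That said, your outline is in the spirit of the proof in \cite{StU5} (and of the earlier boundary-determination arguments in \cite{LSU}, \cite{StU4}): one fixes boundary normal coordinates, expands the geodesic flow near the tangent direction $(x_0,\xi_0)$, and reads off the normal jet of $g$ inductively from the Taylor coefficients of the lens data, with the non-conjugacy hypothesis supplying the needed smoothness of $\sigma$ and $\ell$ up to the tangent set.

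One point deserves tightening. You locate the role of non-conjugacy in the \emph{invertibility of the inductive coefficient}, phrasing it as ``the invertibility of a Jacobi-field problem along $\gamma_{x_0,\xi_0}$.'' In the actual argument the non-conjugacy assumption is used earlier and for a different purpose: it guarantees that the exit map $(x,\xi)\mapsto(\sigma(x,\xi),\ell(x,\xi))$ is smooth on a full neighborhood of $(x_0,\xi_0)$ in $\overline{B(\partial M)}$, including the tangent set $s=0$, so that the Taylor expansion you want to match against even exists. Once that expansion is available, the nondegeneracy at each inductive step comes from the explicit structure of the geodesic equations in boundary normal coordinates (the term $\ddot x^n=\tfrac12\partial_n g_{\alpha\beta}\dot x^\alpha\dot x^\beta+\cdots$ and its iterates), not from a separate Jacobi-field inversion along the full geodesic. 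So your plan is right, but the justification of the key step should be rerouted: non-conjugacy $\Rightarrow$ smoothness of the data at $s=0$; the algebraic invertibility of the coefficient of $\partial_n^J g_{\alpha\beta}(0)$ is then a local computation at $x_0$ independent of the global geometry of $\gamma_{x_0,\xi_0}$.
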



\subsection{\bf The microlocal condition}\label{main assumption}

 To state the results of \cite{StU4} and \cite{StU5} we need some definitions. 
\begin{Def}   \label{def_comp} 
We say that $\D$ is {\em complete} for the metric $g$, if for any $(z,\zeta)\in T^*M$ there exists a maximal in $M$, finite length unit speed geodesic $\gamma :[0,l] \to M$ through $z$, normal to $\zeta$, such that 
\begin{align}  \label{i}
&\left\{(\gamma(t), \dot\gamma(t)); \;0\le t\le l  \right\}   \cap S(\bo)  \subset  \D,\\ \label{ii}
&\text{there are no conjugate  points on $\gamma$.}
\end{align}
\noindent We call the $C^k$ metric $g$ {\em regular}, if a complete set $\D$ exists, i.e., if $\overline{B(\bo)}$ is complete.
\end{Def}
If $z\in\bo$ and $\zeta$ is conormal to $\bo$, then $\gamma$ may reduce to one point. 

\noindent\textbf{Topological Condition (T):} Any path in $M$ connecting two boundary points  is homotopic to a polygon $c_1\cup \gamma_1 \cup c_2\cup\gamma_2\cup\dots \cup\gamma_k \cup c_{k+1}$  with the properties that for any $j$,
(i) $c_j$ is a path on $\bo$;
(ii) $\gamma_j :[0,l_j]\to M$ is a  geodesic lying  in $\Mint$ with the exception of its endpoints and is transversal to $\bo$ at both ends; moreover, $\kappa_-(\gamma_j(0), \dot\gamma_j(0)) \in  \D$; 
\medskip
Notice that (T) is an open condition w.r.t.\ $g$, i.e., it is preserved under small $C^2$ perturbations of $g$. 
To define the $C^K(M)$ norm below in a unique way, we choose and fix a finite atlas on $M$.

\subsubsection{Results about tensor tomography} We refer to \cite{StU4a} for more details about the results in this section. 
It turns out that a linearization of the lens rigidity problem is again the problem of s-injectivity of the ray transform $I$. Here and below we sometimes drop the subscript $g$. 
Given $\D$ as above, we denote by $I_\D$ (or $I_{g,\D}$) the ray transform $I$ restricted to the maximal geodesics issued from $(x,\xi)\in \kappa_-^{-1}(\D)$. 
The first result of this section generalizes Theorem~\ref{thm_an}.
\begin{thm} \label{thm_an2} \ 
Let $g$ be an analytic, regular metric on $M$. Let $\D$ be  complete and open. Then $I_\D$ is s-injective.
\end{thm}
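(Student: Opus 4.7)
The plan is to extend the analytic microlocal proof of Theorem \ref{thm_an} to the local data set $\D$. Enlarge $(M,g)$ to a slightly larger compact manifold $(\tilde M,\tilde g)$ with $g$ extended analytically, and extend $f$ by zero to $\tilde f$ on $\tilde M$. The hypothesis $I_\D f=0$ then asserts that the geodesic ray transform of $\tilde f$ with respect to $\tilde g$ vanishes along every maximal geodesic whose boundary lift lies in $\D$. Writing $\tilde f=\tilde f^{\,s}+d\tilde v$ for the Helmholtz decomposition on $\tilde M$ (with $\tilde v\in H^1_0(\tilde M)$), the goal is to show $\tilde f^{\,s}=0$, equivalently $f=dv$ for some $v\in H^1_0(M)$.

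Fix an arbitrary $(z_0,\zeta_0)\in T^*M\setminus 0$. By completeness of $\D$ there is a maximal finite-length geodesic $\gamma_0\colon[0,l]\to M$ through $z_0$, normal to $\zeta_0$, free of conjugate points, whose initial and terminal vectors lie in $\kappa_-^{-1}(\D)$ and $\kappa_+^{-1}(\D)$ respectively. Choose an analytic cutoff $\chi$ on $\partial_-SM$ supported in a small non-conjugate neighborhood of $(\gamma_0(0),\dot\gamma_0(0))$ whose image under $\kappa_-$ is contained in $\D$, and form the localized normal operator $N_\chi=I^*\chi I$. A Schwartz-kernel computation shows that, because $\tilde g$ is analytic and the exponential map is an analytic diffeomorphism on the support of $\chi$, $N_\chi$ is an analytic pseudodifferential operator of order $-1$ in a neighborhood of $z_0$. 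Its principal symbol at $(z_0,\zeta_0)$, computed as in \cite{StU2}, has kernel exactly the potential 2-tensors, so $N_\chi$ is analytically elliptic on the solenoidal part. Since $I\tilde f\equiv 0$ on the support of $\chi$, we have $N_\chi\tilde f=0$; analytic elliptic regularity, applied modulo the symmetric-differential gauge, then yields $(z_0,\zeta_0)\notin\WF(\tilde f^{\,s})$. As $(z_0,\zeta_0)$ was arbitrary, $\tilde f^{\,s}$ is real analytic in $\Mint$.

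To upgrade this interior real analyticity to $\tilde f^{\,s}\equiv 0$, one combines it with the vanishing of $\tilde f$ off $M$ and with boundary information at points of $\D$, exactly as in \cite{StU4}: on $\tilde M\setminus M$ one has $\tilde f^{\,s}=-d\tilde v$ and $\delta d\tilde v=0$, so standard elliptic unique continuation plus the vanishing of the full jet of $\tilde f^{\,s}$ at $\D$-accessible boundary points (a linear analogue of the boundary determination in Theorem \ref{thm_jet}) propagates the vanishing across $\partial M$ into $\Mint$, giving $\tilde f^{\,s}=0$ throughout. The main obstacle is the middle, microlocal step: one must verify that $N_\chi$ is a genuinely \emph{analytic}, rather than merely $C^\infty$, pseudodifferential operator in a neighborhood of $z_0$, and that its ellipticity on solenoidal tensors survives restriction to an arbitrarily narrow pencil of geodesics around $\gamma_0$. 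Both hinge on the absence of conjugate points along $\gamma_0$ — so the cutoff sees only the regular regime of the exponential map — and on the analyticity of $\tilde g$; these are exactly the two hypotheses packaged into a regular analytic metric with a complete $\D$.
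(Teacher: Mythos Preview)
This survey paper does not actually prove Theorem~\ref{thm_an2}; it states the result and refers the reader to \cite{StU4a} for the argument. Your outline is faithful to the strategy carried out there: for each $(z_0,\zeta_0)$ invoke completeness of $\D$ to produce a conjugate-point-free geodesic $\gamma_0$ normal to $\zeta_0$ with endpoints controlled by $\D$, use a microlocalized normal operator that is analytically elliptic on solenoidal tensors to conclude $(z_0,\zeta_0)\notin\WF(f^s)$, and then finish by analytic continuation from the collar $\tilde M\setminus M$ together with boundary determination of the jet.

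There is, however, one genuine slip. You write ``choose an analytic cutoff $\chi$ on $\partial_-SM$'': there are no nonzero compactly supported real-analytic functions, so $\chi$ is necessarily only $C^\infty$, and then $N_\chi=I^*\chi I$ is a priori only a $C^\infty$ pseudodifferential operator. The claim that it is an \emph{analytic} \PDO\ with analytic-elliptic symbol does not follow from a Schwartz-kernel computation alone. In \cite{StU4,StU4a} this step is done instead by a complex stationary phase argument of Sj\"ostrand type: one pairs $f$ against an oscillatory integral whose phase is built analytically from the geodesic flow near $\gamma_0$ (this is where analyticity of $g$ and the absence of conjugate points are actually used), and the smooth truncation appears only away from the critical set, where it contributes an exponentially small, hence analytically negligible, error. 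Your sketch should replace the ``analytic cutoff'' line by this mechanism, or at least explain why the $C^\infty$ truncation error does not contaminate the analytic wavefront set at $(z_0,\zeta_0)$.

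A smaller point worth making explicit: the passage from $N_\chi\tilde f=0$ to $(z_0,\zeta_0)\notin\WF(\tilde f^{\,s})$ uses that the solenoidal projection $f\mapsto f^s$ is itself an analytic pseudodifferential operator in the interior (it is the solution operator of an elliptic system with analytic coefficients). You allude to this with ``applied modulo the symmetric-differential gauge'', but since the projector is non-local it deserves a sentence.
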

The theorem above allows us to formulate a generic result:
\begin{thm}  \label{thm_2}
Let $\mathcal{G}\subset C^k(M)$ be an open set of regular Riemannian metrics on $M$ such that (T) is satisfied for each one of them. Let the  set $\D' \subset \partial SM$ be open and complete for each $g\in\mathcal{G}$. Then there exists  an open and dense 
subset  $\mathcal{G}_s$  of $\mathcal{G}$ such that $I_{g,\D'}$ is s-injective for any $g\in \mathcal{G}_s$.
\end{thm}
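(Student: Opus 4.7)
The plan is to adapt the stability-plus-analytic-approximation strategy used for Theorem~\ref{thm_gen} to the localized data setting. Write $\mathcal{G}_s = \{g\in\mathcal{G} : I_{g,\D'} \text{ is s-injective}\}$. The two tasks are to show that $\mathcal{G}_s$ is open in $C^k(M)$ and that it is dense in $\mathcal{G}$. Density will follow almost immediately from Theorem~\ref{thm_an2}; openness is the technical heart of the argument and will come from a quantitative stability estimate for the solenoidal part of $f$.

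For density, fix any $g_0\in\mathcal{G}$ and a $C^k$-neighborhood $U\subset\mathcal{G}$ of $g_0$. The regularity of $g$ and the completeness of $\D'$ are both open conditions under $C^k$ perturbations, since the finitely many nonconjugate geodesics required in Definition~\ref{def_comp}, as well as the topological condition (T), depend continuously on $g$ in a compact family. After possibly shrinking $U$, every $g\in U$ is therefore regular with $\D'$ complete. Real-analytic metrics are $C^k$-dense, so $U$ contains some real-analytic $g_a$; Theorem~\ref{thm_an2} then yields $g_a\in\mathcal{G}_s$, proving that every $C^k$-neighborhood of $g_0$ meets $\mathcal{G}_s$.

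For openness, the plan is to fix $g_0\in\mathcal{G}_s$ and establish a stability estimate of the form $\|f^s\|_{L^2(M)} \le C \|N_{g_0,\D'} f\|_{H^{s}(M_1)}$ on some slightly larger manifold $M_1\supset M$, where $N_{g,\D'} = I_{g,\D'}^* \chi\, I_{g,\D'}$ with a smooth cutoff $\chi$ supported in $\D'$. The completeness assumption produces, for every cotangent direction $\zeta$ at every $z\in M$, a nonconjugate geodesic in $\D'$ normal to $\zeta$; this provides precisely the covering of phase space needed to realize $N_{g,\D'}$ as a pseudodifferential operator of order $-1$ on $M_1$ that is elliptic on the solenoidal cone after an appropriate gauge choice. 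Combined with the assumed s-injectivity of $I_{g_0,\D'}$, a parametrix construction converts ellipticity into the desired stability estimate. Since the principal symbol of $N_{g,\D'}$, and hence the constant in the estimate, depends continuously on $g$ in $C^k$, and since condition (T) provides a well-defined global solenoidal projection stable under perturbation, the estimate persists (with a slightly worse constant) on a $C^k$-neighborhood of $g_0$, forcing $I_{g,\D'}$ to remain s-injective there.

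The main obstacle is the microlocal step in the third paragraph: in the non-simple setting one must handle possible non-strict convexity of $\bo$ and conjugate points outside the portion of phase space relevant to $\D'$, define the weight $\chi$ so that the cutoffs do not destroy ellipticity of $N_{g,\D'}$ on solenoidal tensors, and keep the solenoidal/potential splitting compatible with the localization, which is where (T) plays a role. Once this pseudodifferential machinery is in place on the analytic reference metric supplied by Theorem~\ref{thm_an2}, the perturbation argument gives openness and completes the proof.
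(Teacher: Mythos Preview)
Your outline is essentially the approach that underlies the result as presented in the paper (and as carried out in the reference \cite{StU4a} to which the paper defers): density of $\mathcal G_s$ comes from approximating any $g_0\in\mathcal G$ by an analytic metric inside the open set $\mathcal G$ and invoking Theorem~\ref{thm_an2}, while openness comes from a stability estimate of the form $\|f^s\|\le C\|N_{g,\D'}f\|$ obtained from the ellipticity of the localized normal operator on solenoidal tensors, together with the $C^k$-continuity of the constants. The paper itself does not give a proof beyond pointing to the stability estimate; your sketch matches that strategy.

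One point worth tightening: in your density paragraph you write that ``the finitely many nonconjugate geodesics required in Definition~\ref{def_comp} \ldots\ depend continuously on $g$.'' Completeness is a condition at \emph{every} $(z,\zeta)\in T^*M$, not at finitely many; the reduction to a finite covering is a separate compactness step (cover $S^*M$ by finitely many open sets on each of which a single smoothly varying nonconjugate geodesic in $\D'$ works, then perturb). This is routine but should be stated, since otherwise the claim that completeness of $\D'$ is $C^k$-open is not justified. Similarly, for openness you should be explicit that the stability estimate you establish at $g_0$ is of the two-term Fredholm type (elliptic estimate plus compact remainder), and that s-injectivity at $g_0$ is what lets you absorb the compact error; only then does the perturbation argument go through. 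With those clarifications the argument is the intended one.
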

Of course, the set $\mathcal{G}_s$ includes all real analytic metrics in $\mathcal{G}$. 
\begin{cor}  \label{cor_1} 
Let $\mathcal{R}(M)$ be the set of all regular $C^k$ metrics on $M$ satisfying (T) equipped with the $C^k(M_1)$ topology. Then for $k\gg1$, the subset of metrics for which the   X-ray transform $I$ over all simple geodesics through all points in $M$ is s-injective, is open and dense in  $\mathcal{R}(M)$.
\end{cor}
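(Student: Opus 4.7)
The plan is to deduce the corollary from Theorem~\ref{thm_2} by a local covering argument. Let $\mathcal{S}\subset \mathcal{R}(M)$ denote the set of regular metrics satisfying (T) for which $I_g$ is s-injective on the family of all simple geodesics (those without conjugate points). The goal is to show that $\mathcal{S}$ is both open and dense in $\mathcal{R}(M)$.

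First I would localize around a base metric. Fix $g_0\in\mathcal{R}(M)$. Regularity of $g_0$ furnishes a complete set $\D_0\subset \overline{B(\bo)}$ in the sense of Definition~\ref{def_comp}; after shrinking I may assume $\D_0$ is open and that every geodesic it parametrizes is simple. The two openness conditions in Definition~\ref{def_comp}, namely absence of conjugate points along each witnessing $\gamma$ and transversal intersection with $\bo$, are stable under $C^k$ perturbations of the metric (for $k\ge 2$) via continuous dependence of the geodesic flow and its Jacobi fields; compactness of the relevant piece of $\partial SM$ combined with assumption (T) promotes this stability to a uniform statement. Hence there is a $C^k$-neighborhood $\mathcal{G}(g_0)\subset\mathcal{R}(M)$ of $g_0$ on which $\D_0$ remains complete and consists only of simple geodesics.

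Next, apply Theorem~\ref{thm_2} to the pair $(\mathcal{G}(g_0),\D_0)$: it produces an open dense subset $\mathcal{G}_s(g_0)\subset \mathcal{G}(g_0)$ such that $I_{g,\D_0}$ is s-injective for every $g\in\mathcal{G}_s(g_0)$. If $f\in L^2(M)$ is annihilated by $I_g$ over every simple geodesic of $g$, then, since $\D_0$ consists of simple geodesics, in particular $I_{g,\D_0}f=0$; s-injectivity of $I_{g,\D_0}$ then gives $f=dv$ with $v\in H_0^1(M)$. Therefore $\mathcal{G}_s(g_0)\subset \mathcal{S}$.

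Density and openness of $\mathcal{S}$ are now formal. For density, any neighborhood $\mathcal{U}$ of $g_0$ contains the non-empty open set $\mathcal{U}\cap\mathcal{G}(g_0)$, which meets the $\mathcal{G}(g_0)$-dense subset $\mathcal{G}_s(g_0)$, yielding a point of $\mathcal{S}\cap\mathcal{U}$. For openness, if $g_0\in\mathcal{S}$ then, because $\D_0$ only parametrizes simple geodesics, s-injectivity of $I_{g_0}$ over all simple geodesics forces $I_{g_0,\D_0}$ to be s-injective, i.e.\ $g_0\in\mathcal{G}_s(g_0)$; openness of $\mathcal{G}_s(g_0)$ inside $\mathcal{G}(g_0)$ places an entire $C^k$-neighborhood of $g_0$ in $\mathcal{S}$. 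The main obstacle is the perturbative Step~1, i.e.\ producing a single open $\D_0$ whose completeness and simplicity survive $C^k$-perturbations of the metric; once this is in place, everything else follows formally from Theorem~\ref{thm_2}.
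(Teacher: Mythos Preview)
Your density argument is fine, and the localization in Step~1 is exactly the right way to set things up for Theorem~\ref{thm_2}. But the openness argument contains a genuine logical error.

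You write: ``because $\D_0$ only parametrizes simple geodesics, s-injectivity of $I_{g_0}$ over all simple geodesics forces $I_{g_0,\D_0}$ to be s-injective.'' This implication runs the wrong way. If $A\subset B$ are two families of geodesics, then $I_{g,A}$ s-injective implies $I_{g,B}$ s-injective (more test geodesics is a stronger hypothesis on $f$, hence an easier conclusion), not the other way around. Here $\D_0$ is a \emph{proper} open subset of the set of all simple geodesics for $g_0$: knowing that $I_{g_0}f(\gamma)=0$ for every simple $\gamma$ forces $f$ to be potential does \emph{not} tell you that the same conclusion follows when you only test against the smaller family $\D_0$. So from $g_0\in\mathcal S$ you cannot conclude $g_0\in\mathcal G_s(g_0)$, and your openness step collapses.

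What your argument does correctly establish is that $\bigcup_{g_0}\mathcal G_s(g_0)$ is an open dense subset of $\mathcal R(M)$ contained in $\mathcal S$; this already gives density of $\mathcal S$, and it is essentially how the paper (which states the corollary without proof, referring to \cite{StU4a}) intends it to be read. Openness of $\mathcal S$ itself is a separate issue: in \cite{StU4a} it comes from the stability estimate underlying Theorem~\ref{thm_2}, which shows that s-injectivity of $I_{g,\D'}$ (for a fixed open complete $\D'$) is an open condition in $g$. Given $g_0\in\mathcal S$, one takes $\D'$ to be the full set of simple $g_0$-geodesics, uses stability to propagate s-injectivity of $I_{g,\D'}$ to nearby $g$, and then observes that for $g$ close to $g_0$ one can pass to a slightly smaller open complete set contained in the simple $g$-geodesics without destroying the stability estimate. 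That last step is not formal and is what your argument is missing.
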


\subsubsection{Results about the non-linear lens rigidity problem} Using the results above, it was proven in \cite{StU5}  the following about the lens rigidity problem on manifolds satisfying the assumptions at the beginning of Section~\ref{main assumption}. More details can be found in 
\cite{StU5}.
Theorem~\ref{thm_1a} below says, loosely speaking, that for the classes of manifolds and metrics we study, the uniqueness question for the non-linear lens rigidity problem can be answered locally by linearization. This is a non-trivial implicit function type of theorem however because our success heavily depends on the a priori stability estimate that the s-injectivity of $I_\D$ implies; 
and the latter is based on the hypoelliptic properties of $I_\D$. We work with two metrics $g$ and $\hat g$; and will denote objects related to $\hat g$ by $\hat \sigma$, $\hat \ell$, etc. 
\begin{thm}  \label{thm_1a}
Let $(M,g_0)$ satisfy the topological assumption (T), with $g_0\in C^k(M)$  a regular Riemannian metric  with $k\gg1$. Let $\D$ be open and complete for $g_0$, and assume that there exists $\D'\Subset\D$ so that $I_{g_0,\D'}$ is s-injective. Then there exists $\eps>0$, such that for any two metrics $g$, $\hat g$ satisfying  
\be{thm3}
\|g-g_0\|_{C^k(M)}+ \|\hat g-g_0\|_{C^k(M)}\le\eps,
\ee
the relations 
$$
\sigma = \hat\sigma, \quad \ell = \hat\ell \quad \text{on $\D$}
$$
imply that there is a $C^{k+1}$ diffeomorphism $\psi :M \to M$ fixing the boundary such that
$$
\hat g=\psi^*g.
$$
\end{thm}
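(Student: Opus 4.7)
The plan is to bootstrap the nonlinear statement from a linear stability estimate for the solenoidal geodesic ray transform, in the spirit of an implicit function theorem argument. Two ingredients make this feasible: Theorem~\ref{thm_jet}, which controls the boundary behavior of $g$ and $\hat g$ from the lens data, and the fact that s-injectivity of $I_{g_0,\D'}$ on solenoidal tensors is stable under small $C^k$ perturbations of the metric, because it is equivalent to ellipticity, on solenoidal tensors, of the normal operator $N_g=I_{g,\D'}^{*}I_{g,\D'}$ (a pseudodifferential operator of order $-1$).

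First I would normalize the boundary. By completeness of $\D$ for $g_0$ and stability of the non-conjugacy condition, every point of $\bo$ is the endpoint of a short transverse $\hat g$-geodesic without conjugate points whose boundary vectors lie in $\D$. Theorem~\ref{thm_jet} therefore yields that $g$ and $\hat g$ have the same jet at every $x\in\bo$. A boundary-normal-coordinate construction then produces a $C^{k+1}$ diffeomorphism $\psi_0$ of $M$, pointwise fixing $\bo$ and close to the identity, such that $\psi_0^*g-\hat g$ vanishes to infinite order along $\bo$. Relabeling, we may assume this holds for $g$ itself.

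Next I would fix a gauge. Ellipticity of $v\mapsto\delta_{\hat g}\, d\,v$ on vector fields vanishing at $\bo$ lets one solve for a small boundary-vanishing vector field whose flow $\phi$ yields $\delta_{\hat g}(\phi^*g-\hat g)=0$. Set $f:=\phi^*g-\hat g$; then $f$ is a symmetric solenoidal $2$-tensor vanishing to infinite order on $\bo$, and by diffeomorphism invariance of the lens data one still has $\sigma_{\phi^*g}=\hat\sigma$, $\ell_{\phi^*g}=\hat\ell$ on $\D$. The nonlinear-to-linear reduction now compares the Hamilton flows of $\phi^*g$ and $\hat g$ along every $\hat g$-geodesic issued from $\kappa_-^{-1}(\D')$, producing a pseudolinearization identity of the schematic form
\[
I_{\hat g,\D'} f(\gamma) \;=\; Q_\gamma(f,\nabla f),
\]
with $Q_\gamma$ bilinear in $(f,\nabla f)$. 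The containment $\D'\Subset\D$ is used to keep these geodesics uniformly away from glancing, so that the integrals defining $Q_\gamma$ are controlled. In a natural Hilbert norm $\cH$ on functions over $\kappa_-^{-1}(\D')$ one obtains
\[
\|I_{\hat g,\D'} f\|_{\cH} \;\le\; C\,\|f\|_{C^1(M)}^{\,2}.
\]

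Finally I would invoke the stability estimate. Since $I_{g_0,\D'}$ is s-injective, $N_{g_0}$ is elliptic on solenoidal tensors; this is an open condition in $g$, so for every $\hat g$ in a small $C^k$-ball around $g_0$ one has
\[
\|h\|_{L^2(M)} \;\le\; C\,\|I_{\hat g,\D'}h\|_{\cH}
\]
whenever $h$ is solenoidal with respect to $\hat g$ and vanishes to infinite order on $\bo$, with a uniform constant $C$. Plugging in $h=f$ and using the quadratic bound gives $\|f\|_{L^2}\le C\|f\|_{C^1}^{\,2}\le C\eps\,\|f\|_{C^1}$; a higher-regularity bootstrap using ellipticity of $N_{\hat g}$ and the infinite-order boundary vanishing of $f$ upgrades this to the same inequality in $C^1$, forcing $f\equiv 0$ once $\eps$ is small enough. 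Then $\phi^*g=\hat g$, and $\psi:=\psi_0\circ\phi^{-1}$ is the desired diffeomorphism. The hard part will be simultaneously closing the pseudolinearization and the uniform stability step: deriving $Q_\gamma$ and bounding it in $\cH$ is delicate near tangential directions (which is precisely why one needs $\D'\Subset\D$), and the uniformity of the stability constant requires that a parametrix for $N_g$ on solenoidal tensors, together with the associated Helmholtz decomposition, depend continuously on $g$ in $C^k$.
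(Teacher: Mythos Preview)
The paper is a survey and does not actually prove Theorem~\ref{thm_1a}; it only states it and refers to \cite{StU5} for the argument, summarizing the method as ``a non-trivial implicit function type of theorem [whose] success heavily depends on the a priori stability estimate that the s-injectivity of $I_\D$ implies.'' Your outline is precisely that scheme---boundary determination via Theorem~\ref{thm_jet}, solenoidal gauge fixing, pseudolinearization (the identity of Lemma~\ref{SU-identity} appears later in the paper in the conformal setting), and closing the loop with the stability estimate coming from ellipticity of the normal operator on solenoidal tensors---so your approach is the same as the one the paper points to.

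Two small caveats worth flagging. First, the pseudolinearization does not literally give $I_{\hat g,\D'}f$ on the left; it gives a weighted transform of $f$ along $g$-geodesics (cf.\ \eqref{L1}--\eqref{pseudo-linear}), and one has to check that this weighted operator enjoys the same microlocal properties as $I_{\hat g,\D'}$ so that the stability estimate still applies. Second, the passage from $\|f\|_{L^2}\le C\eps\|f\|_{C^1}$ to $f=0$ is not a pure elliptic bootstrap: in \cite{StU5} it goes through interpolation between the $L^2$ estimate and the a priori $C^k$ bound \eqref{thm3}, together with the hypoelliptic gain from $N_{\hat g}$, and this is also where condition (T) is used to pass from $f^s=0$ to a global diffeomorphism. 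These are exactly the ``hard parts'' you identify at the end, so your proposal is on target.
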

By Theorem~\ref{thm_2}, the requirement that $I_{g_0,\D'}$ is \mbox{s-injective} is a generic one for $g_0$. Therefore, 
Theorems~\ref{thm_1a} and \ref{thm_2} combined imply that there is local uniqueness, up to isometry, near a generic set of regular metrics. 
\begin{cor}  \label{cor_1a}
Let $\D'\Subset \D$, $\mathcal{G}$, $\mathcal{G}_s$ be as in Theorem~\ref{thm_2}. Then the conclusion of Theorem~\ref{thm_1a} holds for any $g_0\in\mathcal{G}_s$. 
\end{cor}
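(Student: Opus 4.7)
The plan is to deduce Corollary~\ref{cor_1a} as an essentially immediate consequence of Theorems~\ref{thm_1a} and~\ref{thm_2}, by checking that the hypotheses of Theorem~\ref{thm_1a} are all in place whenever the base metric $g_0$ lies in the generic set $\mathcal{G}_s$ provided by Theorem~\ref{thm_2}.

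First, I would unpack what it means that $g_0 \in \mathcal{G}_s$. By the statement of Theorem~\ref{thm_2}, $\mathcal{G}_s$ is an open dense subset of an ambient open set $\mathcal{G}\subset C^k(M)$ consisting of regular Riemannian metrics for which (T) holds and for which the (common) set $\D'\subset \partial SM$ is open and complete. Thus for every $g_0 \in \mathcal{G}_s$ we obtain, \emph{for free}, three properties needed by Theorem~\ref{thm_1a}: (a) $g_0$ is a regular $C^k$ metric; (b) $(M,g_0)$ satisfies the topological condition (T); and (c) $\D'$ (hence also any $\D$ with $\D'\Subset \D$) is complete for $g_0$. The defining property of $\mathcal{G}_s$ gives the crucial additional ingredient: (d) the restricted ray transform $I_{g_0,\D'}$ is s-injective.

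Next, I would invoke Theorem~\ref{thm_1a} directly with this $g_0$, with the given open complete set $\D$, and with the same $\D'\Subset \D$. Since all hypotheses (a)--(d) are verified, Theorem~\ref{thm_1a} produces the desired $\varepsilon>0$ such that, for all metrics $g,\hat g$ with $\|g-g_0\|_{C^k}+\|\hat g-g_0\|_{C^k}\le \varepsilon$, the equalities $\sigma=\hat\sigma$ and $\ell=\hat\ell$ on $\D$ force the existence of a $C^{k+1}$ boundary-fixing diffeomorphism $\psi:M\to M$ with $\hat g=\psi^* g$. This is exactly the conclusion to be proven.

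The only subtlety, and the one worth flagging, is compatibility of the openness assumptions: Theorem~\ref{thm_1a} requires the \emph{strict} containment $\D'\Subset \D$ in order to run the a priori stability/hypoellipticity argument on which its implicit-function-theorem step is based, while Theorem~\ref{thm_2} only asserts s-injectivity of $I_{g_0,\D'}$ for a single open complete $\D'$. The hypothesis of Corollary~\ref{cor_1a} builds this compatibility into its statement by fixing $\D'\Subset \D$ from the outset, so no additional argument is needed; otherwise one would have to shrink $\D$ slightly, verify that the shrunk set is still complete for nearby metrics (which follows from the openness of condition (T) and of the completeness notion under $C^2$ perturbations), and then apply Theorem~\ref{thm_1a}. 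Since this is already arranged, the corollary follows immediately.
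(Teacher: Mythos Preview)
Your proposal is correct and is essentially the same approach as the paper's, which simply remarks (in the sentence preceding the corollary) that Theorems~\ref{thm_1a} and~\ref{thm_2} combined give the result. You have just unpacked this in more detail, correctly noting along the way that completeness of $\D'$ implies completeness of the larger set $\D$.
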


Bao and Zhang have proved in \cite{Bao-Zhang} a stability estimate for the lens rigidity problem under some microlocal conditions.


\bigskip

\section{Boundary and lens rigidity with partial data}

Now we consider the boundary rigidity problem with partial (local) data, that is, we know the boundary 
distance function for points on the boundary near a given point. Partial data problems arise naturally in applications since in many cases one doesn't have access to the whole boundary. We first study the corresponding linearized problem with partial data in section \ref{linear conformal}--\ref{linearized general}. In section \ref{nonlinear conformal} we consider the partial data boundary and lens rigidity problem through a pseudolinearization. Section \ref{global foliation} is devoted to the applications of the various partial data results to global problems under a geometric foliation condition.

\subsection{The linearized problem for conformal metrics}\label{linear conformal}

It is well-known that the linearization of the boundary rigidity and lens rigidity problem is the geodesic ray transform of symmetric 2-tensors. In particular, if one restricts the linearization in a fixed conformal class, it reduces to the geodesic ray transform of smooth functions on the manifold. In this section, we discuss the {\em local} geodesic ray transform of functions in dimension $\geq 3$.

Let $X$ be a strictly convex domain in a Riemannian manifold $(\tilde X,g)$ of dimension $\geq 3$ with boundary defining function $\rho$ (so $\rho\in\CI(\tilde X)$, $\rho>0$ in $X$, $<0$ on $\tilde X\setminus \overline{X}$, vanishes non-degenerately at $\pa X$). We recall that {\em strict convexity} means that geodesics which are tangent to $\pa X$ are {\em only} simply tangent, curving away from $X$, or more explicitly in terms of Hamiltonian dynamics, with $G$ the dual metric function on $T^*\tilde X$, if for some $\beta\in T^*_p \tilde X$, $p\in\pa X$, $\beta\neq 0$, one has $(H_G\rho)(\beta)=0$ where $H_G$ is the Hamiltonian vector field associated with $G$, then necessarily $(H_G^2\rho)(\beta)<0$. 
For an open set $O\subset \overline{X}$, we call geodesic segments $\gamma$ of $g$ which are contained in $O$ with endpoints at $\pa X$ {\em $O$-local geodesics}; we denote the set of these by $\cM_O$. Thus, $\cM_O$ is an open subset of the set of all geodesics, $\cM$. We then define the {\em local geodesic transform} of a function $f$ defined on $X$ as the collection $(If)(\gamma)$ of integrals of $f$ along geodesics $\gamma\in \cM_O$, i.e.\ as the restriction of the X-ray transform to $\cM_O$.
 
The main result of this section is an invertibility result by Uhlmann and Vasy \cite{UV2} for the local geodesic transform on neighborhoods of $p\in \partial X$ in $\overline{X}$ of the form $\{\tilde x>-c\}$, $c>0$, where $\tilde x$ is a function with $\tilde x(p)=0$, $d\tilde x(p)=-d\rho(p)$, see Figure~\ref{fig:convex-1} below.

\begin{thm}\label{local geodesic function}
For each $p\in\pa X$, there exists a function $\tilde x\in\CI(\tilde X)$ vanishing at $p$ and with $d\tilde x(p)=-d\rho(p)$ such that for $c>0$ sufficiently small, and with $O_p=\{\tilde x>-c\}\cap\overline{X}$, the local geodesic transform is injective on $H^s(O_p)$, $s\geq 0$. Further, let $H^s(\cM_{O_p})$ denote the restriction of elements of $H^s(\cM)$ to $\cM_{O_p}$, and for $\digamma>0$ let
$$
H^s_\digamma(O_p)=e^{\digamma/(\tilde x+c)} H^s=\{f\in H^s_{\loc}(O_p):\ e^{-\digamma/(\tilde x+c)} f\in H^s(O_p)\}.
$$
Then for $s\geq 0$ there exists $C>0$ such that for all $f\in H^s_\digamma(O_p)$,
$$
\|f\|_{H^{s-1}_\digamma(O_p)}\leq C\|If|_{\cM_{O_p}}\|_{H^s(\cM_{O_p})}.
$$
\end{thm}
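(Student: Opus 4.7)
The plan is to adapt the Uhlmann--Vasy microlocal/scattering calculus strategy. The central idea is to introduce an artificial boundary, localize the geodesic transform to geodesics nearly tangent to its level sets, and then show that the associated normal operator, after exponential conjugation, is elliptic in Melrose's scattering calculus on the region $O_p$.

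First I would choose $\tilde x\in\CI(\tilde X)$ with $\tilde x(p)=0$ and $d\tilde x(p)=-d\rho(p)$ in such a way that the level sets $\{\tilde x=-c'\}$, for $c'\in[0,c]$ with $c>0$ small, are strictly concave when viewed from $\{\tilde x>-c'\}$, i.e.\ strictly convex from inside $X\cap\{\tilde x>-c'\}$. Strict convexity of $\pa X$ and the choice of $d\tilde x(p)$ make this possible on a neighborhood of $p$. Set $x=\tilde x+c$, so that $O_p=\{x>0,\ \rho\geq 0\}$ has the boundary $\pa X$ together with an ``artificial boundary'' $\{x=0\}$. Geodesics in $\cM_{O_p}$ are precisely those which stay in $\{x>0\}$ between hitting $\pa X$ twice; by strict concavity of $\{x=\mathrm{const}\}$, these geodesics, parametrized at their point of closest approach to $\{x=0\}$, split into a transverse family and a limit tangential family.

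Next I would build a localized normal operator. Pick a smooth cutoff $\chi$ on $\cM$ supported near geodesics tangent to level sets of $x$ (say, restricting the angle between $\dot\gamma$ and $\ker d\tilde x$). For $\digamma>0$ define
\[
N_\digamma f = e^{\digamma/x}\,L^*\chi\, I(e^{-\digamma/x} f),
\]
where $L^*$ is the natural backprojection that integrates over the relevant family of geodesics through a given point, weighted to produce a pseudodifferential operator. The key technical step, following Uhlmann--Vasy, is to show that $N_\digamma\in\Psi_{\scl}^{-1,0}(O_p)$ (i.e.\ it lies in Melrose's scattering calculus with $x$ as the boundary defining function), and to compute both its interior and its boundary (scattering) principal symbols. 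The interior symbol is the usual one coming from the X-ray transform composed with a weighted backprojection, and is automatically elliptic. The new and crucial point is that at the scattering face $\{x=0\}$, the exponential conjugation produces a Gaussian factor in the oscillatory integral representation, and the resulting scattering principal symbol is the Fourier transform of a Gaussian times a nonnegative density supported on the nearly-tangent cone. For $\digamma>0$ this Gaussian is strictly positive, so the scattering symbol is nonvanishing, giving ellipticity in $\Psi_{\scl}^{-1,0}(O_p)$.

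From full ellipticity, standard scattering calculus parametrix construction yields, for $s\geq 0$, the estimate
\[
\|u\|_{H^{s-1}_{\scl}(O_p)}\leq C\bigl(\|N_\digamma u\|_{H^s_{\scl}(O_p)}+\|u\|_{H^{-N}_{\scl}(O_p)}\bigr),
\]
and a standard support/analytic-continuation argument removes the error term for $u$ supported in $\overline{O_p}$. Substituting $u=e^{-\digamma/x}f$ and using $\|N_\digamma u\|_{H^s_{\scl}}\lesssim \|\chi I(e^{-\digamma/x}f)\|_{H^s(\cM_{O_p})}\lesssim \|If|_{\cM_{O_p}}\|_{H^s(\cM_{O_p})}$, and translating the scattering Sobolev norms into the weighted norms $H^s_\digamma(O_p)$, gives the stability estimate in the theorem and, as an immediate corollary ($If=0$), injectivity on $H^s(O_p)$ for $c$ small.

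The main obstacle will be step two: verifying that $N_\digamma$ indeed lies in $\Psi_{\scl}^{-1,0}(O_p)$ and computing its scattering boundary symbol. This requires writing $I$ and $L^*$ as Fourier integral-like operators in coordinates adapted to $x$, carefully carrying the exponential weight through the oscillatory integral, performing a stationary-phase/Schwartz-kernel analysis uniformly up to $x=0$, and checking that the output kernel has the correct conormal structure at the scattering diagonal and the correct decay/oscillation at the scattering face. Once this microlocal calculation is completed, the positivity of the Gaussian and the ellipticity away from tangential directions (enforced by $\chi$) together make ellipticity straightforward, and the rest of the proof is a routine application of scattering Fredholm theory.
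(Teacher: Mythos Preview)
Your overall architecture is that of the paper: artificial boundary $\{x=0\}$, a localized normal operator conjugated by $e^{\pm\digamma/x}$, ellipticity in Melrose's scattering calculus $\Psi_{\scl}^{-1,0}$, then Fredholm theory and smallness of $c$. But several of the key technical points are misidentified, and as written they would not go through.

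First, the cutoff is not a fixed angular localization ``near geodesics tangent to level sets of $x$''. In the paper one takes $\chi(\lambda/x)$, i.e.\ the angular aperture shrinks \emph{linearly in $x$}; it is exactly this $x$-dependent scaling (together with the prefactor $x^{-2}$) that forces the Schwartz kernel to live on the scattering double space and places the operator in $\Psi_{\scl}^{-1,0}$. A cutoff of fixed aperture does not yield a scattering pseudodifferential operator. Second, the exponential conjugation does \emph{not} produce a Gaussian: it contributes the factor $e^{-\digamma X}$ to the boundary kernel, which supplies Schwartz decay in one direction only. The Gaussian structure that makes the boundary symbol computable comes from the \emph{choice} of $\chi$ itself as (close to) a specific Gaussian $e^{-s^2/(2\digamma^{-1}\alpha)}$; this is the content of the paper's Lemma on boundary ellipticity. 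Third, ``the Gaussian is strictly positive, so the scattering symbol is nonvanishing'' is not a valid argument---positivity of a kernel says nothing about nonvanishing of its Fourier transform. The paper actually computes $(\cF_X\cF_Y\tilde K)(\xi,\eta)$ explicitly and verifies the lower bound $c\langle(\xi,\eta)\rangle^{-1}$; this calculation uses that $\alpha|Y|^2$ is (in the geodesic case) a quadratic form in $Y$. Finally, the compact error is removed not by ``support/analytic continuation'' but by taking $c>0$ small, which makes the error term small because the Schwartz kernel has small support; this is where the ``for $c$ sufficiently small'' in the statement comes from.
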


\begin{rem*}
Here the constant $C$ is uniform in $c$ for small $c$, and indeed if we consider the regions $\{\rho\geq\rho_0\}\cap\{\tilde x>-c\}$ with $|\rho_0|$ and $|c|$ sufficiently small and such that this intersection is non-empty, the estimate is uniform in both $c$ and $\rho_0$. Further, the estimate is also stable under sufficiently small perturbations of the metric $g$, i.e.\ the constant is uniform. (Notice that the hypotheses of the theorem are satisfied for small perturbations of $g$)
\end{rem*}

We remark that for this result one only needs to assume convexity near the point $p.$ This local result is new even in the case that the metric is conformal to the Euclidean metric. We also point out that we also get a reconstruction method in the form of a Neumann series. See \cite{UV2} for more details. These results generalize support type theorems to the smooth case for the geodesic X-ray transform given in \cite{Krish1} for simple real-analytic metrics.


For the rest of this section, we give a sketch of the proof of Theorem \ref{local geodesic function}. In order to motivate the proof, recall that Stefanov and Uhlmann \cite{StU4a} have
shown that under a microlocal condition on the geodesics, one can
recover the singularities of functions from their X-ray transform, and
indeed from a partial X-ray transform (where only some geodesics are
included in the X-ray family $\cM'$). (In
fact, they also showed analogous statements for the transforms on
tensors.) Roughly speaking what one needs is that given a covector
$\nu=(z,\zeta)$, one needs to have a geodesic in $\cM'$ normal to $\zeta$ at $z$
such that in a neighborhood of $\nu$ a simplicity condition is
satisfied. Indeed, under these assumptions, a microlocal version of
the normal operator, $(QI)^*(QI)$, where $Q$ microlocalizes to $\cM'$
roughly speaking, is an elliptic pseudodifferential operator.

Now, in
dimension $\geq 3$, if the boundary $\pa X$ is strictly convex,
one can use geodesics which are almost tangent to
$\pa X$ to give a family $\cM'$ which satisfies the above conditions
for $\nu$ with $z$ near $\pa X$. Concretely, let $\rho$ be a boundary defining function of $X$, i.e.\
$\rho>0$ in $X$, $\rho=0$ at $\pa X$, and $d\rho\neq 0$ at $\pa X$; we
assume that in fact $\rho$ is defined on the ambient space $\tilde X$
as above.
First we choose an initial neighborhood
$U$ of $p$ in $\tilde X$ and a function $\tilde x$ defined on it with $\tilde x(p)=0$, $d\tilde x(p)=-d\rho(p)$, $d\tilde x\neq 0$ on $U$ with convex level sets from the side
of the sublevel sets and
such that $O_c=\{\tilde x>-c\}\cap\{\rho\geq 0\}$ satisfies
$\overline{O_c}\subset U$ is compact. One example of such $\tilde x$ is 
$$
\tilde x(z)=-\rho(z)-\ep|z-p|^2,
$$
whose level sets are slightly
less convex, see Figure~\ref{fig:convex-1} below.

\begin{figure}[ht]\label{fig:convex-1}
\begin{center}\includegraphics[width=80mm]{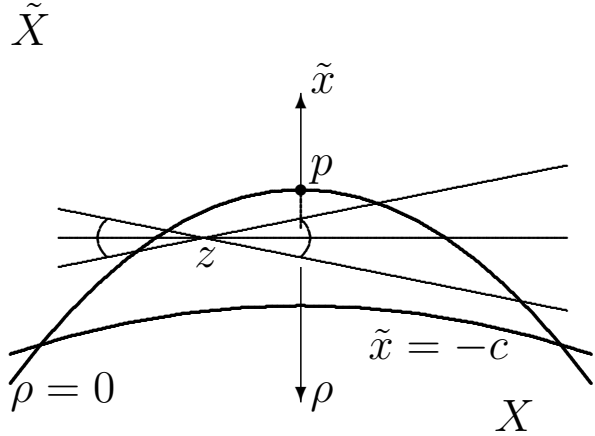}\end{center}
\caption{The functions $\rho$ and $\tilde x$ when the background is flat space $\tilde X$. The intersection of $\rho\geq 0$ and $x_c> 0$ (where $x_c=\tilde x+c$, so this is the region $\tilde x>-c$) is the lens shaped region $O_p$. Note that, as viewed from the superlevel sets, thus from $O_p$, $\tilde x$ has concave level sets. At the point $z$, one only considers integrals over geodesics in the indicated small angle. As $z$ moves to the artificial boundary $x_c=0$, the angle of this cone shrinks like $C x_c$ so that in the limit the geodesics taken into account become tangent to $x_c=0$.}
\end{figure}

We consider geodesics $\gamma_\nu:I\to \tilde X$ parameterized by
$\nu=(z,\zeta)\in S \tilde X$ (the sphere bundle of $\tilde X$ realized as a subbundle
of $T\tilde X$, e.g.\ via a Riemannian metric; we actually use a
slightly modified identification) with $\gamma_\nu'(0)=\nu$, if $\nu$ is tangent to a level set of $\tilde x$ in
$O_c$, i.e. if $\frac{d}{dt}(\tilde x\circ\gamma_\nu)|_{t=0}=0$,
then $\alpha(\nu):=\frac{d^2}{dt^2}(\tilde x\circ\gamma_\nu)|_{t=0}\geq C>0$. The
lower bound on the second derivative is a concavity statement for the level sets of
$\tilde x$
from the side of the superlevel sets. Let $x=x_c=\tilde x+c$ as above. Thus,
$x$ is a boundary defining function for $\{\tilde x>-c\}$; for the
time being we regard $c$ as fixed. A
consequence of the uniform concavity statement is that, with
$\lambda=\frac{d}{dt}(x\circ\gamma_\nu)|_{t=0}$, if $C_1>0$ is
sufficiently small and $|\lambda|<C_1 \sqrt{x}$, then $\gamma_\nu$ remains in $x\geq 0$. Rather than using
this range of $\lambda$, we instead use the stronger bound
$|\lambda|< C_2 x$, and define $A$, which is
essentially a `microlocal normal operator' for the geodesic ray
transform, to be an average:
$$
Af(z)=x^{-2}\int If(\gamma_\nu)\chi(\lambda/x)\,d\mu(\nu),
$$
where $\mu$ is a non-degenerate smooth measure on $S\tilde X$, and $\chi\geq 0$ has
compact support. Then it is routine to check that the principal symbol of $A$ at $(z,\zeta)$ is a multiple of 
$$|\zeta|^{-1}\int_{|\hat Z|=1,\,\hat Z\perp \zeta} \chi(z,\hat Z)\,\sigma(z,\hat Z)\,d\hat Z$$
for a positive density $\sigma$. As observed in \cite{StU4a} this gives a recovery of
singularities for the local problem we are considering, it yields no
invertibility or reconstruction. Indeed for the latter we would like
to have an invertible operator on a space of
functions on $O_c$; in particular, as one approaches $x=0$ one
would need to only allow integrals over geodesics in a narrow cone,
becoming tangent to $x=0$, which takes one outside the
framework of standard pseudodifferential operators.

To remedy this, we introduce the artificial boundary $x=0$,
and work with pseudodifferential operators in $x>0$ which
degenerate at $x=0$. The
particular degeneration we end up with is Melrose's scattering
calculus \cite{RBMSpec}. This is defined on manifolds with
boundary, with boundary defining function $x$, and is based on
degenerate vector fields $x^2\pa_x$ and $x\pa_{y_j}$, where the $(x,y_1,\ldots,y_{n-1})$
are local coordinates. This has the effect of pushing $x=0$ `to
infinity' (these vector fields are complete under the exponential
map). 
In particular, if we consider $\overline{\mathbb R^n}$ the radial compactification of $\mathbb R^n$, let $\Psisc$ stands for the scattering calculus of Melrose, $\Psisc^{m,l}(\overline{\mathbb R^n})$ corresponds to symbols $a\in S^{m,l}$ satisfying
$$|(D_z^\alpha D_\zeta^\beta a)(z,\zeta)|\leq C_{\alpha \beta}\langle z\rangle^{l-|\alpha|}\langle\zeta\rangle^{m-|\beta|}.$$

To describe the Schwartz kernel of a scattering pseudodifferential operator, it is convenient to introduce the scattering coordinates
$$x,\,y,\, X=\frac{x'-x}{x^2},\, Y=\frac{y'-y}{x}$$
valid for $x>0$. Then the Schwartz kernel of an element of $\Psisc^{m,l}(\{x\geq 0\})$ is of the form
$K=x^{-l}\tilde K(x,y,X,Y)$, where $\tilde K$ is smooth in $(x,y)$ down to
$x=0$ with values in conormal distributions on $\RR^n_{X,Y}$, conormal
to $\{X=0,\ Y=0\}$, which are Schwartz at infinity (i.e.\ decay
rapidly at infinity with all derivatives). Notice that the operator $A$ does not have rapid decay as $(X,Y)\to \infty$, to remedy this we introduce exponential weights and consider for $\digamma\in\RR$,
$$
A_\digamma=e^{-\digamma/x}A e^{\digamma/x}.
$$
It is not difficult to check that under scattering coordinates the Schwartz kernel of $A_F$ takes the form 
\begin{equation}\label{A_digamma kernel}
e^{-\digamma X/(1+xX)}\chi\Big(\frac{X-\alpha|Y|^2}{|Y|}+O(x)\Big) |Y|^{-n+1}J,
\end{equation}
where the density factor $J$ is smooth and positive with $J|_{x=0}\equiv 1$. Thus the following result holds for $A_\digamma$, of which the ellipticity statement is essentially for the same reason as that for operator $A$.

\begin{lemma}\label{interior elliptic}
For $\digamma>0$, $A_\digamma$ is in $\Psisc^{-1,0}(\{x\geq 0\})$, and is elliptic in the sense that the standard principal
symbol is such near the boundary (up to the boundary, $x=0$).
\end{lemma}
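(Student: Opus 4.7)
The plan is to verify directly from the explicit kernel formula \eqref{A_digamma kernel} that $A_\digamma$ lies in $\Psisc^{-1,0}(\{x\ge 0\})$, and then extract its standard principal symbol near the boundary $x=0$. I would proceed in three steps: (i) check the conormal/smoothness structure of the kernel at $(X,Y)=(0,0)$ down to $x=0$; (ii) check Schwartz decay in $(X,Y)$ at infinity, uniformly in $(x,y)$ up to $x=0$; (iii) compute the principal symbol and verify its non-vanishing up to $x=0$.

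For (i), the kernel in \eqref{A_digamma kernel} carries no prefactor of $x^{-l}$, consistent with $l=0$. The density $J$ is smooth and positive up to $x=0$, and the weight $e^{-\digamma X/(1+xX)}$ together with the cutoff $\chi\bigl((X-\alpha|Y|^2)/|Y|+O(x)\bigr)$ is smooth in $(x,y)$ down to $x=0$ wherever $\chi\ne 0$, since the only potential singularity of the exponential, $1+xX=0$, occurs at $X=-1/x<0$, while $\chi$ localizes to $X\approx \alpha|Y|^2\ge 0$. All non-smooth behavior in $(X,Y)$ is carried by the factor $|Y|^{-n+1}$, which is conormal to $\{Y=0\}$ in $\mathbb{R}^{n-1}$; combined with the parabolic localization $|X-\alpha|Y|^2|\lesssim |Y|$ enforced by $\chi$, the kernel is a conormal distribution to the origin in $\mathbb{R}^n_{X,Y}$ of an order matching the differential order $m=-1$.

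For (ii), which is the technical heart of the lemma, the cutoff $\chi$ alone is compactly supported only in its argument $(X-\alpha|Y|^2)/|Y|$, which permits $|Y|\to\infty$ along the parabola $X\sim\alpha|Y|^2$; without the exponential factor the bare operator $A$ thus fails to have Schwartz kernel and does not lie in $\Psisc$. With the weight present, on $\supp\chi$ one has $X\ge c|Y|^2 - C|Y|$, so for $|Y|$ large, $e^{-\digamma X/(1+xX)}\le e^{-c'\digamma|Y|^2/(1+xX)}$; treating the regimes $xX\le 1$ and $xX\ge 1$ separately then yields Schwartz decay in both $X$ and $|Y|$, with constants uniform in $(x,y)$ up to $x=0$. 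Derivatives in $X,Y$ produce only polynomial factors absorbed by this decay, while derivatives in $x$ of the exponent $-\digamma X/(1+xX)$ remain bounded on $\supp\chi$, preserving the estimates to all orders.

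For (iii), the standard principal symbol is extracted by Fourier transform in $(X,Y)$ of the leading part of the kernel. Restricted to $x=0$, the kernel is $e^{-\digamma X}\chi\bigl((X-\alpha|Y|^2)/|Y|\bigr)|Y|^{-n+1}$ times a positive density with $J|_{x=0}=1$. Passing to polar coordinates in $Y$, rescaling $X=|Y|s$, and taking the limit as the dual covector tends to infinity, one recovers up to a positive constant the expression $|\zeta|^{-1}\int_{|\hat Z|=1,\,\hat Z\perp\zeta}\chi(z,\hat Z)\,\sigma(z,\hat Z)\,d\hat Z$ already identified in the motivating discussion before the lemma. The factor $|\zeta|^{-1}$ fixes $m=-1$, and strict positivity of the integral, which holds from $\chi\ge 0$ and the non-empty support of $\chi$ in the allowed angular region, gives ellipticity of the standard principal symbol up to $x=0$. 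The main obstacle throughout is step (ii): converting the slow decay along the parabolic support of $\chi$ into genuine Schwartz decay uniformly as $x\to 0$, which is exactly what the conjugation by $e^{\digamma/x}$ with $\digamma>0$ achieves.
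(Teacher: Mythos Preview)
The paper does not actually prove this lemma; it records the Schwartz kernel formula \eqref{A_digamma kernel} and then simply asserts the result, noting that the ellipticity of the standard principal symbol holds ``essentially for the same reason as that for operator $A$''. Your outline is therefore already more detailed than what the survey provides, and your three-step strategy (conormality at the diagonal, Schwartz decay of the kernel at infinity in $(X,Y)$, computation of the fiber-infinity principal symbol) is the correct one and is what underlies the original Uhlmann--Vasy argument.

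There is, however, a real gap in your step (ii). In the regime $xX\ge 1$ your bound collapses to $e^{-\digamma X/(1+xX)}\le e^{-\digamma/(2x)}$, and this is \emph{constant} in $(X,Y)$: it gives no decay as $|Y|\to\infty$ with $x>0$ fixed, so on its own it cannot yield the uniform Schwartz estimate. What closes the gap is an a priori support constraint of the type $x|Y|\lesssim 1$ on the kernel. This is forced by the geometry: the geodesics through $(x,y)$ in the admissible cone remain in a fixed compact region and have uniformly bounded length, so $|y'-y|$ is bounded and $|Y|=|y'-y|/x\lesssim 1/x$. Equivalently, the ``$O(x)$'' error in the argument of $\chi$ in \eqref{A_digamma kernel} is not uniform in $(X,Y)$ but is schematically $x|Y|$ times something smooth, so that compact support of $\chi$ already forces $x|Y|$ to stay bounded. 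Once you have $1/x\gtrsim |Y|$, your Case-2 bound becomes $e^{-\digamma/(2x)}\le e^{-c\digamma|Y|}$, which is exponential decay in $|Y|$ (hence, via $|X|\lesssim |Y|^2$ on $\supp\chi$, Schwartz in $(X,Y)$), and the argument goes through. Your steps (i) and (iii) are fine as written.
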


However, even when this holds
globally on a compact space, this ellipticity is not sufficient
for Fredholm properties (between Sobolev spaces of order shifted by
$1$), or the corresponding estimates, due to
the boundary $x=0$. In general, scattering pseudodifferential
operators also have a principal symbol at the boundary, which is a
(typically non-homogeneous)
function on a cotangent bundle; this needs to be invertible (non-zero) globally
to imply Fredholm properties. Similarly, estimates implying the finite dimensionality
of localized (in $O$) non-trivial nullspace as well as stability estimates,
follow
if this principal symbol is also invertible on $O$. (Note that here
localization {\em does allow} the support in $\{x\geq 0\}$ to include
points at $x=0$!)

The main technical result is the following
\begin{lemma}\label{boundary elliptic}
For $\digamma>0$ there exists $\chi\in\CI_c(\RR)$, $\chi\geq 0$,
$\chi(0)=1$, such that for the corresponding operator 
$A_\digamma=e^{-\digamma/x}Ae^{\digamma/x}$ the boundary symbol is 
elliptic; indeed, this holds for all $\chi$ sufficiently close in 
Schwartz space to a 
specific Gaussian. 
\end{lemma}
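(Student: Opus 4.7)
My plan is to identify the boundary (scattering) principal symbol of $A_\digamma$ with the $(X,Y)$-Fourier transform of the restriction of its Schwartz kernel to $x=0$ and then show, for a specific Gaussian $\chi$, that this Fourier transform is nowhere zero and has the correct non-vanishing asymptotic behavior at fiber infinity required for an element of $\Psisc^{-1,0}$. From the kernel formula \eqref{A_digamma kernel} together with $J|_{x=0}\equiv 1$, the boundary symbol is, up to a non-vanishing density,
$$\widehat{K_0}(\xi,\eta)=\int e^{-i(X\xi+Y\cdot\eta)}\,e^{-\digamma X}\,\chi\Bigl(\frac{X-\alpha|Y|^2}{|Y|}\Bigr)\,|Y|^{-n+1}\,dX\,dY.$$

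The key simplification is a change of variables. Substituting $s=(X-\alpha|Y|^2)/|Y|$ (so $dX=|Y|\,ds$) and passing to polar coordinates $Y=r\omega$, $r>0$, $\omega\in S^{n-2}$, $dY=r^{n-2}\,dr\,d\omega$, the singular factor $|Y|^{-n+1}$ is exactly cancelled by the combined Jacobian $|Y|\cdot r^{n-2}$, and the integral factorizes. With the specific choice $\chi(s)=e^{-s^2/(2\sigma^2)}$, completing the square in $s$ evaluates the inner $s$-integral to $\sigma\sqrt{2\pi}\exp\bigl(\tfrac{\sigma^2}{2}(\digamma+i\xi)^2 r^2\bigr)$, and the spherical integral is a rotation-invariant, real, analytic Bessel-type function $\Phi_n(r|\eta|)$. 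The result is the one-dimensional integral
$$\widehat{K_0}(\xi,\eta)=\sigma\sqrt{2\pi}\int_0^\infty e^{\lambda(\xi)r^2}\,\Phi_n(r|\eta|)\,dr,\qquad \lambda(\xi):=\tfrac{\sigma^2}{2}(\digamma+i\xi)^2-\alpha(\digamma+i\xi).$$

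The real part of $\lambda(\xi)$ equals $\tfrac{\sigma^2}{2}(\digamma^2-\xi^2)-\alpha\digamma$. By the uniform concavity lower bound $\alpha\geq c>0$ recalled in Section~\ref{main assumption}, for $\sigma>0$ sufficiently small (uniformly in the base point) $\operatorname{Re}\lambda(\xi)$ is bounded above by $-\alpha\digamma/2$ for all $\xi\in\RR$ and tends to $-\infty$ as $|\xi|\to\infty$. The positivity of $\digamma$ is essential: it converts what would otherwise be a purely oscillatory Fourier integral into a genuinely Gaussian-decaying one in $r$, and it is what prevents zeros of the resulting integral. In the model dimension $n=3$ the integral has the closed form $\tfrac{\sqrt\pi}{2\sqrt{-\lambda(\xi)}}\,e^{|\eta|^2/(8\lambda(\xi))}\,I_0\bigl(-|\eta|^2/(8\lambda(\xi))\bigr)$, a product of a power, a complex Gaussian, and a modified Bessel function whose argument stays in a sector bounded away from the negative real axis whenever $\sigma$ is small; a confluent hypergeometric analogue handles general $n\geq 3$. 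A Laplace-type asymptotic analysis as $(\xi,\eta)\to\infty$ then delivers the uniform non-vanishing at fiber infinity required for ellipticity in $\Psisc^{-1,0}$.

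Finally, the map $\chi\mapsto\widehat{K_0}$ is continuous from $\mathcal{S}(\RR)$ into the scattering symbol space, and ellipticity (uniform non-vanishing of the appropriately rescaled boundary symbol) is an open condition there; hence all $\chi$ sufficiently Schwartz-close to the chosen Gaussian produce an elliptic boundary symbol, and such a $\chi$ can moreover be taken in $\CI_c(\RR)$ with $\chi\geq 0$, $\chi(0)=1$ by smoothly cutting off the Gaussian at a sufficiently large radius. The main obstacle in this plan is the non-vanishing analysis of the closed-form Bessel--Laplace transform: one must track the argument of the complex modified Bessel function $I_0$ along every ray to fiber infinity to guarantee it never hits a zero, which pins down a specific admissible range of $\sigma$ depending on $\digamma$ and on the uniform lower bound for $\alpha$.
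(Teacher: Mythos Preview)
Your argument has a genuine gap: you treat $\alpha$ as a constant, but in the boundary kernel \eqref{A_digamma boundary kernel} one has $\alpha=\alpha(0,y,0,\hat Y)$, a nontrivial function of the direction $\hat Y=\omega$ (indeed, for geodesics it is a positive definite quadratic form in $\hat Y$). Consequently, after your substitution $X=s|Y|+\alpha|Y|^2$ and the $s$-integration, the remaining exponent $\lambda(\xi,\omega)r^2=\big(\tfrac{\sigma^2}{2}(\digamma+i\xi)^2-\alpha(\omega)(\digamma+i\xi)\big)r^2$ still depends on $\omega$, so the integral does \emph{not} factor as $\int_0^\infty e^{\lambda(\xi)r^2}\Phi_n(r|\eta|)\,dr$ and your closed Bessel formula (and the non-vanishing analysis built on it) does not apply. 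Even in the special case where $\alpha$ happens to be constant, you are left tracking zeros of $I_0$ along complex rays and matching the precise $\langle(\xi,\eta)\rangle^{-1}$ asymptotics, which is considerably more delicate than what is actually needed.

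The paper sidesteps both issues by a different choice of Gaussian: it takes the direction-dependent cutoff $\chi(s)=e^{-s^2/(2\digamma^{-1}\alpha)}$ (the ``slightly inadmissible'' choice). The point of letting the variance be $\digamma^{-1}\alpha$ is that after the $X$-Fourier transform the exponent in $Y$ becomes \emph{real} and negative, namely $-\digamma^{-1}(\xi^2+\digamma^2)\alpha|Y|^2/2$; since $\alpha|Y|^2=Q(Y,Y)$ is quadratic, this is a genuine real Gaussian in $Y$, cf.\ \eqref{X-FT}. Its $Y$-Fourier transform is then automatically positive and yields directly the elliptic lower bound $c\langle(\xi,\eta)\rangle^{-1}$ as in \eqref{Y-FT}, with no complex special-function analysis required. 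The approximation step you describe at the end is the same as the paper's; what you are missing is the specific $\alpha$-dependent Gaussian that turns a complex oscillatory $Y$-integrand into a positive one.
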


\noindent{\em Sketch of the proof:} By \eqref{A_digamma kernel} the restriction of the Schwartz kernel at $x=0$ is
\begin{equation}\label{A_digamma boundary kernel}
\tilde K(y,X,Y)=e^{-\digamma X}|Y|^{-n+1}\chi\Big(\frac{X-\alpha(0,y,0,\hat Y)|Y|^2}{|Y|}\Big), 
\end{equation}
where $\hat Y=Y/|Y|$, the desired almost invertibility (up to compact errors) amounts to the Fourier
transformed kernel, $\mathcal F_{X,Y} \tilde K(y,.,.)$ being bounded below in absolute value by
$c\langle(\xi,\eta)\rangle^{-1}$, $c>0$ (here $(\xi,\eta)$ are the
Fourier dual variables of $(X,Y)$).

In order to find a suitable $\chi$, we first make
a slightly inadmissible choice for an easier computation,
namely we take $\chi(s)=e^{-s^2/(2\digamma^{-1}\alpha)}$, so
$\hat\chi(.)=c\sqrt{\digamma^{-1}\alpha}e^{-\digamma^{-1}\alpha|.|^2/2}$ for appropriate $c>0$. Thus,
$\chi$ does not have compact support, and an approximation argument
will be necessary at the end.

As mentioned above, $\alpha$ (as the Hessian of $x$) restricting on the tangent plans of the level sets of $x$ has a positive lower bound. Moreover, in the case of geodesics $\alpha(x,y,0,\hat Y)$ is a positive definite quadratic form in $\hat Y$, thus one has $\alpha |Y|^2=Q(Y,Y)$, a quadratic form in $Y$. Writing $Q^{-1}(Y,Y)$ for the dual quadratic form, we have that the $X$-Fourier transform of $\tilde K$ is a none multiple of
\begin{equation}\label{X-FT}
\digamma^{-1/2}\sqrt{\alpha}|Y|^{2-n}e^{-\digamma^{-1}(\xi^2+\digamma^2)Q(Y,Y)/2},
\end{equation}
where the last factor is a real Gaussian, thus is Schwartz in $Y$ for $\digamma>0$. Taking into account the formula of the Fourier transform of Gaussian functions, the $Y$-Fourier transform of \eqref{X-FT} is virtually a positive multiple of
\begin{equation}\label{Y-FT}
\langle\xi\rangle^{-1}\varphi(\eta/\langle\xi\rangle)=|\eta|^{-1}|\eta'|\varphi(\eta')=|\eta|^{-1}\tilde\varphi(\langle\xi\rangle/|\eta|,\eta'/|\eta'|),
\end{equation}
where $\langle\xi\rangle=(\xi^2+\digamma^2)^{1/2}$, $\eta'=\eta/\langle\xi\rangle$, with $\varphi$ a positive $0$-th order symbol near $0$ and $\tilde\varphi$ smooth positive near $0$ in the first argument. This assures lower bounds
$c\langle(\xi,\eta)\rangle^{-1}$, $c>0$, i.e.\ elliptic lower
bounds. 
\qed

Lemma \ref{boundary elliptic} implies that
$A_\digamma$ is Fredholm on proper function spaces, i.e.\ $A$ itself is Fredholm on
exponentially weighted spaces, where {\em exponential growth} is
allowed at $x=0$. We now recall that $x=x_c$ depends on $c$, with all
estimates uniform for $c$ remaining in a compact set, and the
argument is finished by showing that for $c>0$ sufficiently small one
not only has Fredholm properties but also invertibility, essentially
as the Schwartz kernel has small support. This proves the main theorem.

We remark that J. Boman has given in
\cite{Bom}  counterexamples for local uniqueness for the X-ray
transform on a plane that integrates along lines with a dense family of smooth
weights so that we expect some restrictions on the family of curves for the uniqueness for the local X-ray transform in dimension two. 






\subsection{More general curves}\label{general curve partial data}

We note that the geodesic nature of the curves was only used in the crucial step of showing that the principal symbol at the boundary is invertible. 
The result of the local invertibility of the geodesic ray transform, Theorem \ref{local geodesic function}, was extended by Zhou \cite[Appendix]{UV2} to the X-ray transform on more general curves.

Given a Riemannian manifold $(\tilde{X}, g)$ of dimension $\geq 3$, we consider smooth curves $\gamma$ on $\tilde{X}$, $|\dot{\gamma}|\neq 0$, that satisfy the following equation
\begin{equation}\label{general curves 1}
\nabla_{\dot{\gamma}}\dot{\gamma}=G(\gamma, \dot{\gamma}),
\end{equation}
where $\nabla$ is the Levi-Civita connection, $G(z, v)\in T_z\tilde{X}$ is smooth on $T\tilde{X}$. $\gamma=\gamma_{z,v}$ depends smoothly on $(z,v)=(\gamma(0),\dot{\gamma}(0))$. We call the collection of such smooth curves on $\tilde{X}$, denoted by $\mathcal{G}$, {\it a general family of curves}. For the sake of simplicity, we assume $\gamma\in \mathcal{G}$ are parameterized by arclength (one can always reparametrize the curve to make this happen, and we will see later that our method also works for curves with non-constant speed). Note that if $G\equiv 0$, $\mathcal{G}$ is the set of usual geodesics; if $G$ is the Lorentz force corresponding to some magnetic field, then $\mathcal{G}$ consists of magnetic geodesics. We consider the X-ray transform of smooth functions along a general family of curves, i.e. $(If)(\gamma), \gamma\in\mathcal{G}.$ X-ray transforms for general curves were studied in e.g. \cite{DPSU, FSU}.

Let $X$ be a domain in $\tilde{X}$ with boundary defining function $\rho$, and $p\in \partial X$ a boundary point. We say that $X$ (or $\partial X$) is {\it strictly convex (concave)} at $p$ with respect to $\mathcal{G}$ if for any $\gamma\in \mathcal{G}$ with $\gamma(0)=p$, $\dot{\gamma}(0)=v\in T_p(\partial X)$, we have $\frac{d^2}{dt^2}\rho(\gamma(t))|_{t=0}<0\,(> 0)$. It is easy to see that the geometric meaning of our definition is similar to the usual convexity with respect to the metric (geodesics).

Now assume $X$ is strictly convex at $p\in \partial X$ with respect to $\mathcal{G}$. Similar to the settings in Section \ref{linear conformal}, we obtain a smooth function $x$ whose level sets are strictly concave with respect to $\mathcal{G}$ from the super-level sets of $x$. In particular, if $\alpha$ is defined the same as in Section \ref{linear conformal} as the Hessian of $x$ with respect to $\mathcal{G}$, then $\alpha(x,y,0,\hat Y)>0$, i.e. $\alpha$ is positive on the tangent plane of the level sets of $x$. It is known that $\alpha(x,y,0,\hat Y)$ defines a positive definite quadratic form for the usual geodesics, however for a general family of curves, it no longer has such special structure.

We consider the operator $A_\digamma=e^{-\digamma/x}Ae^{\digamma/x}$ introduced in Section \ref{linear conformal}. Similar to Lemma \ref{interior elliptic} $A_\digamma$ is in $ \Psi^{-1,0}_{sc}(\{x\geq 0\})$ for $\digamma>0$, with elliptic standard principal symbol. 
Now to show the invertibility of $A_\digamma$, it suffices to establish the following lemma, analogous to Lemma \ref{boundary elliptic}, for a general family of curves. 

\begin{lemma}\label{boundary elliptic general}
For $\digamma >0$ there exists $\chi\in C^{\infty}_c(\mathbb{R})$, $\chi\geq 0, \chi(0)=1$, such that the boundary principal symbol of corresponding $A_\digamma$ is elliptic.
\end{lemma}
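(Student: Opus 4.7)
The plan is to follow the proof of Lemma \ref{boundary elliptic} step by step, isolating the one place where the quadratic-form structure $\alpha(0,y,0,\hat Y)|Y|^2=Q(Y,Y)$ was used and replacing that step with an argument that relies only on positivity of $\alpha$ on the tangent planes of the level sets of $x$.

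First, the derivation of the boundary kernel formula \eqref{A_digamma boundary kernel} uses only scattering coordinate computations and a Taylor expansion of $t\mapsto x(\gamma_\nu(t))$, both of which depend only on the smoothness of the vector field $G(z,v)$ defining the family $\mathcal G$. The formula therefore carries over verbatim, with
$$\alpha(z,\hat Y)=\mathrm{Hess}(x)(\hat Y,\hat Y)+dx(G(z,\hat Y))$$
smooth on $S\tilde X$ and, by the strict convexity hypothesis, positive on the tangent plane of the level sets of $x$ at the relevant points. Next, I would take the same trial Gaussian $\chi(s)=e^{-s^2/(2\digamma^{-1}\beta)}$, now with $\beta>0$ a constant chosen large enough (depending on $\digamma$ and the uniform lower bound for $\alpha$) that $\alpha-\digamma/(2\beta)\ge c_0>0$ uniformly in $\hat Y$. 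The $X$-Fourier transform is then an exact Gaussian computation yielding
$$(\mathcal{F}_X\tilde K)(y,\xi,Y)=C\beta^{-1/2}|Y|^{2-n}\exp\bigl(-|Y|^2 a(\xi,\hat Y)\bigr),$$
with $a(\xi,\hat Y)=\digamma\alpha+\frac{\xi^2-\digamma^2}{2\beta}-i\xi\bigl(\alpha-\frac{\digamma}{\beta}\bigr)$ satisfying $\mathrm{Re}\,a\ge c(1+\xi^2)$ uniformly.

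The heart of the argument is the $Y$-Fourier transform. Passing to spherical coordinates $Y=r\hat Y$, we obtain
$$(\mathcal{F}_Y\mathcal{F}_X\tilde K)(y,\xi,\eta)=C\beta^{-1/2}\int_{S^{n-2}}\int_0^\infty e^{-ir\,\eta\cdot\hat Y}e^{-r^2 a(\xi,\hat Y)}\,dr\,d\hat Y.$$
The inner one-dimensional integral is a half-line Gaussian with linear imaginary phase; completing the square, its modulus is bounded below by $c\bigl(|a|^{1/2}+|\eta\cdot\hat Y|\bigr)^{-1}$. Integrating over the sphere and splitting into the two regimes $|\eta|\lesssim\langle\xi\rangle$ (where the integrand is pointwise $\gtrsim\langle\xi\rangle^{-1}$) and $|\eta|\gg\langle\xi\rangle$ (where the real-Gaussian factor $e^{-(\eta\cdot\hat Y)^2/(4\,\mathrm{Re}\,a)}$ concentrates $\hat Y$ onto the codimension one slice $\{\eta\cdot\hat Y=0\}$, giving induced measure $\sim|\eta|^{-1}$) yields the desired elliptic lower bound $c\langle(\xi,\eta)\rangle^{-1}$, matching the conclusion of formula \eqref{Y-FT} in the geodesic case.

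Finally, the Gaussian $\chi$ is not compactly supported, but the preceding estimates depend only on its Schwartz seminorms and on $\chi(0)=1$, so an approximation in Schwartz topology by $\chi_n\in C_c^\infty(\mathbb R)$ (exactly as in the geodesic case) yields the required compactly supported $\chi$. The main obstacle is the third paragraph: in the geodesic case, $\alpha(0,y,0,\hat Y)|Y|^2$ was a positive definite quadratic form in $Y$, so the $Y$-Fourier transform was another Gaussian computable in closed form; here $\alpha(0,y,0,\hat Y)|Y|^2$ is only homogeneous of degree two in $Y$ and positive, so the $Y$-integral can no longer be evaluated exactly and must instead be estimated via the spherical-coordinate decomposition above, using only the uniform positivity of $\mathrm{Re}\,a$ on the sphere.
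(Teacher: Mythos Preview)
Your proposal has a genuine gap at the sphere-integral step, and the gap stems from the choice of $\chi$.

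You take $\chi(s)=e^{-s^2/(2\digamma^{-1}\beta)}$ with a \emph{constant} $\beta$. As your own computation shows, the resulting $X$-Fourier transform then carries a complex exponent $a(\xi,\hat Y)$ with nonzero imaginary part (proportional to $\xi(\alpha-\digamma/\beta)$), so $(\mathcal F_X\tilde K)(y,\xi,\cdot)$ is a genuinely complex-valued function of $Y$. You then pass to polar coordinates, bound the modulus of the radial integral below, and integrate over $S^{n-2}$ in two regimes. But a pointwise lower bound on the modulus of the integrand says nothing about the modulus of the \emph{integral}: the complex phases coming from $\operatorname{Im}a(\xi,\hat Y)$ can (and in general will) produce cancellation across the sphere. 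Neither the ``$|\eta|\lesssim\langle\xi\rangle$'' regime nor the concentration argument in the ``$|\eta|\gg\langle\xi\rangle$'' regime survives this, because both implicitly use positivity. So the elliptic lower bound $c\langle(\xi,\eta)\rangle^{-1}$ is not established.

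The paper's proof is organized precisely to avoid this issue. It keeps the $\alpha$-dependent Gaussian $\chi(s)=e^{-s^2/(2\digamma^{-1}\alpha)}$ from Lemma~\ref{boundary elliptic}; with this choice the $X$-Fourier transform is a \emph{real, positive} Gaussian in $|Y|$ (the imaginary part of the exponent cancels exactly). Because for general curves $\alpha(0,y,0,\hat Y)\neq\alpha(0,y,0,-\hat Y)$, the kernel now has two pieces with $\alpha_\pm$; after the change of variables $\hat Y\mapsto -\hat Y$ on one piece the half-line radial integrals combine into a full-line Gaussian, giving the boundary symbol as a positive multiple of
\[
\langle\xi\rangle^{-1}\int_{S^{n-2}} e^{-|\eta\cdot\hat Y|^2\langle\xi\rangle^{-2}/(4c\,\alpha(0,y,0,\hat Y))}\,d\hat Y,
\]
which is manifestly positive. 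The elliptic lower bound then follows from positivity of $\alpha$ alone, with no phase to worry about. If you want to repair your argument, the cleanest fix is exactly this: let the Gaussian width depend on $\hat Y$ through $\alpha$, and use the $\pm\hat Y$ pairing to turn half-line integrals into full-line ones.
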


\noindent{\em Sketch of the proof:} Similar to the strategy in the proof of Lemma \ref{boundary elliptic}, we first analyze the boundary principal symbol for the case $\chi(s)=e^{-s^2/(2\digamma^{-1}\alpha)}$, a compact supported one follows by approximation. It is not difficult to see that the Schwartz kernel of $A_\digamma$ and its boundary restriction $\tilde K$ on $x=0$ have the same type as \eqref{A_digamma kernel} and \eqref{A_digamma boundary kernel} respectively, except that the values of $\alpha$ at $\hat Y$ and $-\hat Y$ are not equal in general. So the $X$-Fourier transform of $\tilde K$ is a non-zero multiple of 
\begin{equation}\label{X-FT general}
|Y|^{2-n}\Big((\digamma^{-1}\alpha_+)^{\frac{1}{2}}e^{-\digamma^{-1}(\xi^2+\digamma^2)\alpha_+|Y|^2/2}+(\digamma^{-1}\alpha_-)^{\frac{1}{2}}e^{-\digamma^{-1}(\xi^2+\digamma^2)\alpha_-|Y|^2/2}\Big),
\end{equation}
where $\alpha_+=\alpha(0,y,0,\hat Y), \, \alpha_-=\alpha(0,y,0,-\hat Y)$.

As mentioned previously, in general $\alpha$ is not a quadratic form in $\hat Y$, which means the exponential term in \eqref{X-FT general} is not Gaussian in $Y$, thus we use polar coordinates to compute the $Y$-Fourier transform of \eqref{X-FT general}. We denote $\frac{\digamma^{-1}(\xi^2+\digamma^2)}{2}$ by $b$, then the boundary principal symbol is a constant multiple of
\begin{align*}
& \int_0^{+\infty}\int_{\mathbb S^{n-2}} e^{-i\eta\cdot\hat{Y}|Y|}|Y|^{2-n}(\alpha_+^{1/2}e^{-b\alpha_+|Y|^2}+\alpha_-^{1/2}e^{-b\alpha_-|Y|^2})|Y|^{n-2}\,d|Y|d\hat Y\\
=& c \int_{\mathbb S^{n-2}} b^{-1/2}(e^{-|\eta\cdot\hat Y|^2/4b\alpha_+}+e^{-|\eta\cdot\hat Y|^2/4b\alpha_-})\,d\hat Y\\
= & c' \langle\xi\rangle^{-1}\int_ {\mathbb S^{n-2}} e^{-|\frac{\eta}{\langle\xi\rangle}\cdot\hat Y|^2/4c\alpha(0,y,0,\hat Y)}\,d\hat Y=c' |\eta|^{-1}\int_ {\mathbb S^{n-2}} \frac{|\eta|}{\langle\xi\rangle} e^{-|\frac{\eta}{\langle\xi\rangle}\cdot\hat Y|^2/4c\alpha(0,y,0,\hat Y)}\,d\hat Y.
\end{align*}
Here 
$$\int_ {\mathbb S^{n-2}} e^{-|\frac{\eta}{\langle\xi\rangle}\cdot\hat Y|^2/4c\alpha(0,y,0,\hat Y)}\,d\hat Y\quad \mbox{and} \quad \int_ {\mathbb S^{n-2}} \frac{|\eta|}{\langle\xi\rangle} e^{-|\frac{\eta}{\langle\xi\rangle}\cdot\hat Y|^2/4c\alpha(0,y,0,\hat Y)}\,d\hat Y$$
play the roles of $\varphi$ and $\tilde\varphi$ from \eqref{Y-FT}. Now taking into account the positivity of $\alpha$, this gives the desired lower bound of the boundary symbol.
\qed

We denote the set of $O$-local curves with respect to $\mathcal{G}$ by $\mathcal{G}_O$, as a consequence of Lemma \ref{boundary elliptic general}, the following local invertibility result holds for a general family of curves.

\begin{thm}
Assume $X$ is strictly convex at $p\in \partial X$ with respect to a general family of curves $\mathcal{G}$, with $O_p=\{x>0\}\cap\overline X$, then the local X-ray transform for $\mathcal{G}_{O_p}$ is injective on $H^s(O_p)$, $s\geq 0$ with the stability estimate
$$\|f\|_{H_\digamma^{s-1}(O_p)}\leq C\|If|_{\mathcal G_{O_p}}\|_{H^s(\mathcal G_{O_p})}.$$
\end{thm}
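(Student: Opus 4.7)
\medskip

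\noindent\emph{Proof proposal.} The plan is to parallel the argument used to prove Theorem~\ref{local geodesic function} in the geodesic case, simply substituting the general-curve version of the boundary ellipticity (Lemma~\ref{boundary elliptic general}) for its geodesic counterpart (Lemma~\ref{boundary elliptic}). The other ingredients — that $A_\digamma\in \Psisc^{-1,0}(\{x\geq 0\})$ with elliptic standard principal symbol, the support structure of the Schwartz kernel of $A$, and the factorization of $A$ through $I$ — depend only on the behavior of the family $\cG$ through the Hessian $\alpha$ of $x$ along the curves, and this has already been controlled in the construction of $x$ and in Lemma~\ref{interior elliptic}.

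First I would combine Lemma~\ref{interior elliptic} with Lemma~\ref{boundary elliptic general} to conclude that, for $\digamma>0$ and a suitable choice of Gaussian-like cutoff $\chi\in\CI_c(\R)$, the operator $A_\digamma$ is \emph{fully elliptic} as an element of Melrose's scattering calculus $\Psisc^{-1,0}(\{x\geq 0\})$, i.e.\ both its standard principal symbol (up to and including $x=0$) and its boundary principal symbol are invertible. Standard scattering-calculus mapping theory then yields that
$$
A_\digamma : H_\scl^{s-1}(\{x\geq 0\}) \longrightarrow H_\scl^s(\{x\geq 0\})
$$
is Fredholm, with the a priori estimate
$$
\|u\|_{H_\scl^{s-1}} \leq C\|A_\digamma u\|_{H_\scl^s} + C\|u\|_{H_\scl^{-N,-N}}
$$
for every $N$. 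Since in the interior of $\{x>0\}$ the scattering Sobolev spaces coincide with the ordinary ones, this is a genuine estimate for functions supported in $O_p$.

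Second, I would upgrade this Fredholm statement to an actual invertibility (equivalently, the removal of the error term $C\|u\|_{H_\scl^{-N,-N}}$) by letting $c>0$ be small. The Schwartz kernel of $A$ in scattering coordinates $(x,y,X,Y)$ is supported in the region dictated by the cutoff $\chi(\lambda/x)$ and by how far the $O_p$-local curves $\gamma_\nu$ can travel; both of these shrink with $c$. Hence for $c$ sufficiently small, $A_\digamma$ is a perturbation, in the scattering-calculus sense, of a model fully elliptic operator whose index is zero and whose nullspace is trivial — this is precisely the smallness argument carried out in \cite{UV2} and it uses only the support properties of the kernel, which are identical here. The outcome is the estimate $\|u\|_{H_\scl^{s-1}} \leq C\|A_\digamma u\|_{H_\scl^s}$ without any error term, uniformly in $c$ for $c$ in a compact set of small positive values.

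Finally, unwinding the exponential conjugation via $A_\digamma = e^{-\digamma/x}A e^{\digamma/x}$ converts this into
$$
\|f\|_{H^{s-1}_\digamma(O_p)} \leq C\|Af\|_{H^s_\digamma(O_p)},
$$
and the factorization $Af(z)=x^{-2}\int If(\gamma_\nu)\chi(\lambda/x)\,d\mu(\nu)$, together with the obvious continuity of this weighted backprojection from $H^s(\cG_{O_p})$ to $H^s_\digamma(O_p)$, yields
$$
\|f\|_{H^{s-1}_\digamma(O_p)} \leq C'\,\bigl\|If|_{\cG_{O_p}}\bigr\|_{H^s(\cG_{O_p})}.
$$
Injectivity on $H^s(O_p)$ for $s\geq 0$ follows from this estimate because $e^{-\digamma/x}$ is bounded on $O_p$, so $H^s(O_p)\subset H^s_\digamma(O_p)$. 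The main obstacle throughout is the boundary ellipticity step, which has already been overcome in Lemma~\ref{boundary elliptic general}; once that is in hand, the rest of the argument is a direct transcription of the geodesic case, since neither the support properties of the kernel of $A$ nor the factorization through $I$ see any of the structure of $G$ beyond what enters $\alpha$.
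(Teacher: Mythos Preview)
Your proposal is correct and follows essentially the same approach as the paper. The paper explicitly notes that ``the geodesic nature of the curves was only used in the crucial step of showing that the principal symbol at the boundary is invertible,'' and then presents the theorem as a direct consequence of Lemma~\ref{boundary elliptic general}; your write-up simply spells out the steps (full ellipticity in $\Psisc^{-1,0}$, Fredholm estimate, smallness of $c$ for invertibility, unwinding the conjugation and factoring through $I$) that the paper leaves implicit by reference to the geodesic case.
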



\begin{rem*}
If we add a non-vanishing weight $w\in C^{\infty}(T\tilde X)$ to the X-ray transform, i.e.
$$(I_wf)(\gamma)=\int w(\gamma(t),\dot{\gamma}(t)) f(\gamma(t))\,dt,$$
a slight modification of the proof of Lemma \ref{boundary elliptic general} allows one to conclude the local invertibility of $I_w$ for a general family of curves. Notice that given a curve $\gamma$ with $|\dot{\gamma}|\neq 0$, a reparametrization exactly introduces a non-vanishing weight to the integral, thus local invertibility of X-ray transform also holds for general families of curves with non-constant speed.
\end{rem*}


\subsection{Linearized problem in general}\label{linearized general}

In this section we discuss the linearized problem in general, i.e. the geodesic ray transform of symmetric tensor fields. Let $(M,g)$ be a compact Riemannian  manifold with boundary. 
The problem we study is the invertibility of $I$. It is well known that \textit{potential} vector fields,  i.e., $f$ which are a symmetric differential $d^sv$ of a symmetric field of order $m-1$ vanishing on $\partial M$ (when $m\ge1$), are in the kernel of $I$. We prove the local invertibility, up to potential fields,  and stability of the geodesic X-ray transform on tensor fields of order $1$ and $2$ near a strictly convex boundary point, on manifolds with boundary of dimension $n\ge3$. We study the $m=1,\,2$ cases for simplicity of the exposition but the methods extend to any $m\ge 1$.

We set this up in the same way as
in Section \ref{local geodesic function} by considering a function $\tilde x$ with
strictly concave level sets from the super-level set side for levels
$c$, $|c|<c_0$, and letting
$$
x_c=\tilde x+c,\ \Omega_c=\{x_c\geq 0,\ \rho\geq 0\}.
$$ 
The main result is the following, see \cite{SUV} for more details.

\begin{thm}\label{thm:local-linear-intro-1}
With $\Omega=\Omega_c$ as above, there is $c_0>0$ such that for $c\in(0,c_0)$,
if $f\in L^2(\Omega)$ then $f=u+d^s v$,
where $v\in\dot H^1_{\loc}(\Omega\setminus\{x=0\})$, while $u\in
L_\loc^2(\Omega\setminus\{x=0\})$ can be stably determined from
$If$ restricted to $\Omega$-local geodesics in the following sense. 
There is a
continuous map
$If\mapsto u$,  where for $s\geq 0$, $f$ in $H^s(\Omega)$, the $H^{s-1}$ norm of $u$
restricted to any compact subset of $\Omega\setminus\{x=0\}$
is controlled by the $H^s$ norm of $If$ restricted to the set of $\Omega$-local geodesics.

Replacing $\Omega_c=\{\tilde x>-c\}\cap M$ by
$\Omega_{\tau,c}=\{\tau>\tilde x>-c+\tau\}\cap M$, $c$ can be taken
uniform in $\tau$ for $\tau$ in a compact set on which the strict
concavity assumption on level sets of $\tilde x$ holds.
\end{thm}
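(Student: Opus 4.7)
My plan is to extend the scattering-calculus proof of Theorem \ref{local geodesic function} to symmetric tensors of order $m \in \{1,2\}$. As in the scalar case, I would work on the manifold with artificial boundary $\{x \geq 0\}$, $x = x_c = \tilde x + c$, and model the problem with Melrose's scattering calculus $\Psisc^{*,*}$. For a symmetric $m$-tensor $f$, the natural tensorial localized normal operator is
$$
(Af)(z) = x^{-2} \int If(\gamma_\nu)\,\chi(\lambda/x)\,\nu^{\otimes m}\,d\mu(\nu),
$$
with $\lambda = \frac{d}{dt}(x\circ\gamma_\nu)|_{t=0}$, and after conjugation by the exponential weight the operator $A_\digamma = e^{-\digamma/x} A e^{\digamma/x}$ lies in $\Psisc^{-1,0}$ exactly as in Lemma \ref{interior elliptic}. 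Its standard principal symbol is a matrix-valued integral of $\chi\,\hat Z^{\otimes m}\otimes \hat Z^{\otimes m}$ over $\hat Z \perp \zeta$, $|\hat Z| = 1$, weighted by a positive density.

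The main new obstacle is the gauge kernel: since $I(d^s v) = 0$ whenever $v|_{\partial M} = 0$, the principal symbol of $A_\digamma$ annihilates the potential directions $\zeta \odot w$, so $A_\digamma$ cannot be elliptic on the full bundle of symmetric $m$-tensors. I would handle this by the solenoidal-gauge strategy: augment $A_\digamma$ by a carefully chosen scattering pseudodifferential operator $P$ built from $d^s$ and its formal adjoint $\delta^s$, so that $A_\digamma + P$ is interior-elliptic (the symbol of $A_\digamma$ being positive on the solenoidal part and $P$ supplying positivity on the potential complement). The hard part is then the scattering boundary symbol at $x = 0$, i.e.\ the tensorial analogue of Lemma \ref{boundary elliptic}. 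I would repeat the Gaussian cutoff computation $\chi(s) = e^{-s^2/(2\digamma^{-1}\alpha)}$, approximated in Schwartz space by compactly supported $\chi$, so that the boundary Fourier-transformed kernel becomes a matrix-valued Gaussian in the tangential variables with tensor weight $\hat Y^{\otimes m}\otimes \hat Y^{\otimes m}$; strict concavity of the level sets of $\tilde x$ supplies the positive form $\alpha$ controlling the Gaussian decay, and the invertibility of the resulting matrix-valued symbol, combined with the gauge contribution from $P$, should yield the required uniform lower bound $\geq c\langle (\xi,\eta)\rangle^{-1}$. I expect the bulk of the work to go here, especially for $m = 2$, where the trace/trace-free decomposition of symmetric two-tensors must interact correctly with the gauge piece to produce an elliptic matrix symbol.

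Once ellipticity of the gauge-fixed $A_\digamma$ is established in both the standard and the scattering-boundary senses, Fredholm estimates between the weighted spaces $H_\digamma^s$ follow from general scattering-calculus machinery. As in the scalar case, the Schwartz kernel of $A_\digamma$ has support of size $O(c)$ in the scattering coordinates, so a small-$c$ perturbation argument off the $c = 0$ situation upgrades Fredholm to genuine invertibility for $c \in (0, c_0)$ and delivers a continuous reconstruction map $If \mapsto u$. By construction, $u$ is the solenoidal representative of $f$ and the residual $f - u = d^s v$, with $v \in \dot H^1_{\loc}(\Omega \setminus \{x=0\})$, is the gauge freedom; the stated $H^{s-1}$ estimate on compact subsets of $\Omega \setminus \{x=0\}$ follows from the boundedness of scattering pseudodifferential operators on the weighted Sobolev spaces. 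Finally, uniformity in $\tau$ for $\Omega_{\tau,c}$ is immediate on translating $\tilde x$: the scattering-symbol bounds, the bound on kernel support size, and the strict-concavity constant are all stable on compact sets of levels where the hypothesis on $\tilde x$ holds.
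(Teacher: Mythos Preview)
Your strategy is essentially the one the paper (following \cite{SUV}) uses: localized normal operator, exponential conjugation into the scattering calculus, gauge-fix by adding a $d^s\!M\delta^s$-type term, prove ellipticity at finite points and at the scattering boundary, then get invertibility for small $c$. A few technical refinements in the paper that you gloss over are worth flagging. First, the operator $L$ (your $A$) is not built with raw tangent vectors $\nu^{\otimes m}$ but with the \emph{scattering} metric $g_\scl$ converting $\lambda\pa_x+\omega\pa_y$ to a scattering covector, and the $x$-prefactor changes with $m$ (no prefactor for one-forms, $x^{2}$ for 2-tensors, cf.\ \eqref{eq:L-forms}--\eqref{eq:L-tensors}); without this the result does not land in $\Psisc^{-1,0}$ acting on sections of $\Tsc^*X$ or $\Sym^2\Tsc^*X$. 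Second, the gauge-fixing is not built from $d^s,\delta^s$ themselves but from their conjugates $d^s_\digamma=e^{-\digamma/x}d^s e^{\digamma/x}$ and $\delta^s_\digamma$ (a Witten-type, $e^{-2\digamma/x}$-solenoidal gauge); this is what makes the added term $d^s_\digamma M\delta^s_\digamma$ a genuine element of $\Psisc^{-1,0}$ whose boundary symbol meshes with that of $N_\digamma$ at $x=0$. Third, in the 2-tensor case the boundary ellipticity does \emph{not} hold for all $\digamma>0$: one must take $\digamma\geq\digamma_0$ sufficiently large (Proposition~\ref{prop:elliptic}), which is why the solenoidal gauge in Theorem~\ref{thm:local-linear-intro} carries the weight $e^{2\digamma/x}$. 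These are exactly the places where your phrase ``should yield the required uniform lower bound'' hides real work, and the paper's choices are what make that step go through.
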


The uniqueness part of the theorem generalizes  Helgason's type of support theorems for tensors fields for analytic metrics \cite{Krish1,KrSt,BQ1}. In those works however, analyticity plays a crucial role and the proof is a form of a microlocal analytic continuation. In contrast, no analyticity is assumed here. We also present an inversion formula.

\medskip

\noindent{\it Idea of the proof:}

We introduce a Witten-type (in the sense of the Witten Laplacian) solenoidal gauge on the scattering
cotangent bundle, $\Tsc^*X$
or its second symmetric power, $\Sym^2\Tsc^*X$. Fixing $\digamma>0$, our gauge is
$$
e^{2\digamma /x}\delta^s e^{-2\digamma/x}f^s=0,
$$
or {\em the $e^{-2\digamma/x}$-solenoidal gauge}. (Keep in mind here
that $\delta^s$ is the adjoint of $d^s$ relative to a scattering metric.)
We are actually working
with
$$
f_\digamma=e^{-\digamma/x} f
$$
throughout; in terms of this the
gauge is
$$
\delta^s_\digamma f^s_\digamma=0,\qquad \delta^s_\digamma=e^{\digamma/x}\delta^s e^{-\digamma/x}.
$$

Rephrasing Theorem \ref{thm:local-linear-intro-1} in terms of the solenoidal gauge we get

\begin{thm}\label{thm:local-linear-intro}
There exists $\digamma_0>0$ such that for $\digamma\geq\digamma_0$ the
following holds.

For $\Omega=\Omega_c$, $c>0$ small, the geodesic X-ray transform on
{\em $e^{2\digamma/x}$-solenoidal}
one-forms and symmetric 2-tensors $f\in e^{\digamma/x} L_\scl^2(\Omega)$,
i.e.\ ones satisfying $\delta^s (e^{-2\digamma/x} f)=0$, is injective, with a stability
estimate and a reconstruction formula.

In addition, replacing $\Omega_c=\{\tilde x>-c\}\cap M$ by
$\Omega_{\tau,c}=\{\tau>\tilde x>-c+\tau\}\cap M$, $c$ can be taken
uniform in $\tau$ for $\tau$ in a compact set on which the strict
concavity assumption on level sets of $\tilde x$ holds.
\end{thm}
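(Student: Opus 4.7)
The plan is to extend the scalar Melrose-scattering-calculus scheme of Theorem~\ref{local geodesic function} to symmetric tensor fields by adjoining a Witten-type solenoidal gauge. First, for a one-form or symmetric $2$-tensor $f$ supported in $\Omega_c$, I would define a tensorial localized normal operator $N_\digamma = e^{-\digamma/x} N e^{\digamma/x}$ on sections of $\Sym^m \Tsc^* X$, where $N$ is built from the X-ray transform by averaging $(If)(\gamma_\nu)\,\dot\gamma_\nu^{\otimes m}$ against the cutoff $\chi(\lambda/x)$ from Section~\ref{linear conformal}, integrated against a scattering density and tensorially paired with the tangent of the geodesic. Exactly as in the scalar computation, the Schwartz kernel of $N_\digamma$ in scattering coordinates $(x,y,X,Y)$ has the same structure as \eqref{A_digamma kernel}, but is now matrix-valued, so $N_\digamma \in \Psisc^{-1,0}(\{x\geq 0\}; \Sym^m \Tsc^* X)$.

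The essential new difficulty is that $N_\digamma$ has a large kernel: $N_\digamma d^s_\digamma v = 0$ for every $v$ vanishing outside the domain, so $N_\digamma$ is never elliptic on the full tensor bundle. I would remedy this by passing to the modified operator
\[
\tilde N_\digamma = N_\digamma + d^s_\digamma\, Q\, \delta^s_\digamma,
\]
where $d^s_\digamma$ is the scattering-$L^2$ adjoint of $\delta^s_\digamma = e^{\digamma/x}\delta^s e^{-\digamma/x}$ and $Q$ is a scattering pseudodifferential operator of the correct bidegree chosen to make the two principal symbols of $\tilde N_\digamma$ simultaneously elliptic. At fiber infinity this is a linear-algebra check: the scalar symbol from Lemma~\ref{interior elliptic} controls directions transverse to potentials, while $d^s_\digamma Q \delta^s_\digamma$ fills in the potential directions. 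The more delicate step, which I expect to be the main obstacle, is the analog of Lemma~\ref{boundary elliptic} at the scattering boundary $x=0$: one must compute the double Fourier transform of the boundary restriction of the matrix-valued Schwartz kernel of $\tilde N_\digamma$ with the near-Gaussian $\chi$ and verify uniform invertibility of the resulting matrix symbol. On solenoidal directions this reduces, after tensorial contractions by $\hat Y$, to the positivity computation that produced \eqref{Y-FT}; on potential directions one uses the positivity of the $\delta^s_\digamma d^s_\digamma$-type boundary symbol and takes $\digamma$ and $Q$ large enough to absorb the lower-order cross terms. This is precisely where the threshold $\digamma \geq \digamma_0$ in the theorem statement gets pinned down.

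Once elliptic in both principal symbols, $\tilde N_\digamma$ is Fredholm between appropriate weighted scattering Sobolev spaces, in particular $e^{\digamma/x} H^s_\scl(\Omega_c)$. To upgrade to invertibility I would repeat the smallness-of-support argument from the end of Section~\ref{linear conformal}: the Schwartz kernel of $\tilde N_\digamma$ is supported in a region shrinking with $c$, so for $c$ sufficiently small the finite-dimensional kernel and cokernel are killed by a Neumann-series perturbation argument, yielding invertibility, the stability estimate $\|f_\digamma\|_{H^{s-1}_\scl} \lesssim \|If\|_{H^s}$, and an explicit Neumann-series reconstruction formula. To conclude the theorem, observe that for an $e^{-2\digamma/x}$-solenoidal $f$ one has $\delta^s_\digamma f_\digamma = 0$ with $f_\digamma = e^{-\digamma/x} f$, so the gauge-fixing term in $\tilde N_\digamma$ annihilates $f_\digamma$ and $\tilde N_\digamma f_\digamma = N_\digamma f_\digamma$ is determined by $If$; inverting $\tilde N_\digamma$ then recovers $f_\digamma$, and hence $f$, from the local X-ray data. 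Finally, the uniformity in $\tau$ when $\Omega_c$ is replaced by $\Omega_{\tau,c}$ follows because every estimate depends on $g$ only through $C^k$ bounds and on the concavity constant of the level sets of $\tilde x$, both of which are uniform on any compact set on which the strict concavity hypothesis holds.
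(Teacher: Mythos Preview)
Your proposal is correct and matches the paper's approach essentially point for point: the paper also builds a tensorial normal operator $N_\digamma=e^{-\digamma/x}LIe^{\digamma/x}$ (with $L$ mapping into sections of $\Sym^m\Tsc^*X$ via $g_\scl$), adjoins a gauge-fixing term $d^s_\digamma M\delta^s_\digamma$ to obtain an operator $A_\digamma$ elliptic in $\Psisc^{-1,0}$ (this is exactly Proposition~\ref{prop:elliptic}), and then runs the small-$c$ Neumann-series argument to pass from Fredholm to invertible. The one technical refinement you glossed over is that $L$ carries extra powers of $x$ (e.g.\ $x^2$ for $2$-tensors, see \eqref{eq:L-tensors}) needed to land in the scattering calculus, and that the $\digamma\geq\digamma_0$ threshold is only required for $2$-tensors while any $\digamma>0$ suffices for one-forms; but your identification of the boundary-symbol computation as the crux and of the solenoidal gauge as the mechanism that makes the gauge-fixing term vanish on the data is exactly right.
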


With $v$ a locally defined function on the space of geodesics,
for one-forms we consider the map $L$
\begin{equation}\label{eq:L-forms}
L v(z)=\int \chi(\lambda/x)v(\gamma_{x,y,\lambda,\omega})g_{\scl}(\lambda\,\pa_x+\omega\,\pa_y)\,d\lambda\,d\omega,
\end{equation}
while for 2-tensors
\begin{equation}\label{eq:L-tensors}
L v(z)=x^{2}\int \chi(\lambda/x)v(\gamma_{x,y,\lambda,\omega})g_{\scl}(\lambda\,\pa_x+\omega\,\pa_y)\otimes g_\scl(\lambda\,\pa_x+\omega\,\pa_y)\,d\lambda\,d\omega,
\end{equation}
so in the two cases $L$ maps into one-forms, resp.\ symmetric
2-cotensors,
where $g_{\scl}$ is a {\em scattering metric} used to
convert vectors into covectors. The proof of Theorem \ref{thm:local-linear-intro} relies on the next proposition on the ellipticity of some scattering pseudodifferential operator analogous to the operator $A_\digamma$ of Section \ref{local geodesic function}.

\begin{prop}\label{prop:elliptic}
First consider the case of one forms. Let $\digamma>0$.
Given $\tilde\Omega$, a neighborhood of $X\cap M=\{x\geq 0,\ \rho\geq 0\}$ in $X$,
for suitable choice of the cutoff $\chi\in\CI_c(\RR)$ and of $M\in\Psisc^{-3,0}(X)$, the operator
$$
A_\digamma=N_\digamma+d^s_\digamma
M\delta^s_\digamma,\qquad N_\digamma=e^{-\digamma/x}LIe^{\digamma/x},\qquad d^s_\digamma=e^{-\digamma/x}d^s e^{\digamma/x},
$$
is elliptic in $\Psisc^{-1,0}(X;\Tsc^*X,\Tsc^*X)$ in $\tilde\Omega$.

On the other hand, consider the case of symmetric 2-tensors. Then
there exists $\digamma_0>0$ such that for $\digamma>\digamma_0$ the
following holds.
Given $\tilde\Omega$, a neighborhood of $X\cap M=\{x\geq 0,\ \rho\geq 0\}$ in $X$,
for suitable choice of the cutoff $\chi\in\CI_c(\RR)$ and of $M\in\Psisc^{-3,0}(X;\Tsc^*X,\Tsc^*X)$, the operator
$$
A_\digamma=N_\digamma+d^s_\digamma
M\delta^s_\digamma,\qquad N_\digamma=e^{-\digamma/x}LIe^{\digamma/x},\qquad d^s_\digamma=e^{-\digamma/x}d^s e^{\digamma/x},
$$
is elliptic in $\Psisc^{-1,0}(X;\Sym^2\Tsc^*X,\Sym^2\Tsc^*X)$ in $\tilde\Omega$.
\end{prop}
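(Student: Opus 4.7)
The plan is to adapt the scalar Fourier-transform computation of Lemma \ref{boundary elliptic} to the tensor-valued setting, using the penalty $d^s_\digamma M\delta^s_\digamma$ to restore ellipticity on the gauge subspace that is annihilated by $N_\digamma$. Since the operator $L$ in \eqref{eq:L-forms}--\eqref{eq:L-tensors} differs from the scalar averaging of Section \ref{linear conformal} only by the extra matrix-valued factor $g_\scl(\lambda\pa_x+\omega\pa_y)^{\otimes m}$ in its Schwartz kernel, the composition $N_\digamma=e^{-\digamma/x}LI\,e^{\digamma/x}$ lies in $\Psisc^{-1,0}(X;E,E)$ with $E=\Tsc^*X$ or $\Sym^2\Tsc^*X$, and both its interior and boundary principal symbols are computable by matrix-valued analogues of the integrals leading to \eqref{X-FT}--\eqref{Y-FT}.

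First I would analyse the standard principal symbol. Evaluated on a test tensor $u$ at fibre variable $\zeta$, one obtains up to a positive factor
$$
\sigma_{-1}(N_\digamma)(z,\zeta)\cdot u \;=\;|\zeta|^{-1}\!\int_{|\hat Z|=1,\,\hat Z\perp\zeta}\bigl(\hat Z^{\otimes m}\otimes\hat Z^{\otimes m}\bigr)(u)\,\chi(z,\hat Z)\,\sigma(z,\hat Z)\,d\hat Z,
$$
which is positive definite on tensors annihilated by $\iota_{\zeta^\sharp}$ and vanishes on the range of the symmetric product $\zeta\otimes_s(\cdot)$. Since $\sigma(d^s_\digamma)=i\zeta\otimes_s$ and $\sigma(\delta^s_\digamma)=-i\iota_{\zeta^\sharp}$, the symbol of $d^s_\digamma M\delta^s_\digamma$ is supported on precisely that gauge subspace with leading term $|\zeta|^2\sigma(M)(z,\zeta)$ times the identity there; choosing $\sigma(M)>0$ then gives interior ellipticity.

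The main work is the boundary principal symbol. Following the proof of Lemma \ref{boundary elliptic}, I would first take $\chi$ equal to the Gaussian $e^{-s^2/(2\digamma^{-1}\alpha)}$, compute the boundary restriction of the Schwartz kernel of $N_\digamma$ in the scattering coordinates $(X,Y)$, and take the $(X,Y)$-Fourier transform; an approximation argument at the end replaces the Gaussian by a compactly supported $\chi$. Decomposing the fibres of $E$ at each $(\xi,\eta)$ into the gauge subspace and its orthogonal complement, the gauge-complement block of the boundary symbol inherits the elliptic lower bound $c\langle(\xi,\eta)\rangle^{-1}$ from \eqref{X-FT}--\eqref{Y-FT}, with the additional tensorial Gaussian weight making the integrand positive. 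The penalty term $d^s_\digamma M\delta^s_\digamma$ contributes a positive endomorphism on the gauge block with boundary symbol bounded below by a positive multiple of $|\zeta|_\scl^2\sigma_\scl(M)$, so choosing $M\in\Psisc^{-3,0}$ with $\sigma_\scl(M)>0$ near the boundary completes the one-form case.

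For symmetric $2$-tensors the hard part, and the reason $\digamma$ must be chosen greater than some $\digamma_0$, is that the conjugation $d^s_\digamma=e^{-\digamma/x}d^se^{\digamma/x}$ generates an additional zeroth-order term proportional to $\digamma\,(x^{-2}dx)\otimes_s(\cdot)$, which creates off-diagonal coupling between the gauge and gauge-complement blocks of the boundary principal symbol. I would show that the positive diagonal contributions from $N_\digamma$ and $d^s_\digamma M\delta^s_\digamma$ dominate this coupling once $\digamma$ is sufficiently large, by a Schur-complement positivity argument carried out uniformly in $(\xi,\eta)$, yielding ellipticity of $A_\digamma$ in $\Psisc^{-1,0}(X;\Sym^2\Tsc^*X,\Sym^2\Tsc^*X)$.
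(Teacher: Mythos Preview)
The paper is a survey and does not itself prove this proposition; it states the result and defers to \cite{SUV}. Your outline follows the strategy of \cite{SUV} faithfully at the structural level: one computes the interior and boundary principal symbols of $N_\digamma$ as matrix-valued analogues of the scalar Gaussian integrals \eqref{X-FT}--\eqref{Y-FT}, observes that $N_\digamma$ can only be positive on the solenoidal subspace (the kernel of the symbol of $\delta^s_\digamma$), and then uses the penalty $d^s_\digamma M\delta^s_\digamma$ with $\sigma_\scl(M)>0$ to supply the missing ellipticity on the complementary gauge subspace. For one-forms this is exactly what \cite{SUV} does, and your sketch is adequate there.

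Your account of the $\digamma>\digamma_0$ requirement for symmetric $2$-tensors, however, misidentifies the mechanism and, as written, does not close. If the gauge subspace is taken to be the range of the boundary symbol of $d^s_\digamma$ (as it must be for the penalty term to be block-diagonal), then the extra term $\digamma\,(x^{-2}dx)\otimes_s(\cdot)$ is already absorbed into the \emph{definition} of that subspace and produces no off-diagonal coupling; if instead you split relative to the unweighted $d^s$, the off-diagonal piece scales like $\digamma$ while the $N_\digamma$-block on the solenoidal side has no evident growth in $\digamma$, so a Schur-complement argument goes the wrong way. In \cite{SUV} the actual obstruction is different: the boundary principal symbol of $N_\digamma$, restricted to the kernel of $\sigma_\scl(\delta^s_\digamma)$, is computed explicitly at finite points of $\Tsc^*_{\pa X}X$ (the delicate region being near $(\xi,\eta)=0$), and for $2$-tensors this restriction is a $\digamma$-dependent matrix that is positive definite only once $\digamma$ exceeds some threshold. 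The largeness of $\digamma$ is used directly in that positivity check, not to dominate a coupling term.
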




\subsection{The non-linear result for conformal metrics}\label{nonlinear conformal}

We move to the non-linear problem. In this section, we consider the boundary rigidity problem in the class of  metrics conformal to a given one and with partial (local) data, that is, we know the boundary 
distance function $d_g$ for points on the boundary near a given point. In \cite{SUV_localrigidity} Stefanov, Uhlmann and Vasy show that one can recover uniquely and in a stable way a conformal factor near a strictly convex point where we have the information. In particular, this implies that we can determine locally the isotropic sound speed of a medium by measuring the travel times of waves joining points close to a convex point on the boundary.

We assume that $\bo$ is strictly convex at $p\in\bo$ w.r.t.\ $g$. Then the  boundary rigidity and the  lens rigidity problems with partial data are equivalent: knowing $d$ near $(p,p)$ is equivalent to knowing $L$ in some neighborhood of  $S_p\partial M$. The size of that neighborhood however depends on a priori bounds of the derivatives of the metrics with which we work.  This equivalence was first noted by Michel \cite{Mi}, since the tangential gradients of $d(x,y)$ on $\bo\times\bo$ give us the tangential projections of $-v$ and $w$, see also \cite[sec.~2]{S-Serdica}. Note that local knowledge of $\ell$ is not needed for the lens rigidity problem\footnote{If $L$ is given only, then the problem is called \textit{scattering rigidity} in some works}, and in fact, $\ell$ can be recovered locally from either $d$ or $L$.

\subsubsection{Pseudolinearization}

The starting point is an identity in \cite{StU1}. We will repeat the proof.

Let $V$, $\tilde V$ be two vector fields on a  manifold  $M$ (which will be replaced later with $S^*M$). Denote by $X(s,X\zero)$ the solution of $\dot X=V(X)$, $X(0)=X\zero$, and we use the same notation for $\tilde V$ with the  
corresponding solution are denoted by $\tilde X$. 
Then we have the following simple statement.

\begin{lemma}\label{SU-identity}
For any $t>0$ and any  initial condition $X\zero$, if $\tilde X\!\left(\cdot,X\zero\right)$ and $X\!\left(\cdot,X\zero\right) $ exist on the interval $[0,t]$, then 
\begin{equation} \label{L1}
\begin{split}
& \tilde X\!\left(t,X\zero\right) -X\!\left(t,X\zero\right)\\ &\quad  = \int_0^t \frac{\partial \tilde X}{\partial X\zero}\!\left(t-s,X(s,X\zero)\right)\left(\tilde V-V \right)\!\left( X(s,X\zero)\right)\,ds.
\end{split}
\end{equation}
\end{lemma}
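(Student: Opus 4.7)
The identity is a standard telescoping/pseudolinearization trick: interpolate between the two flows by running $V$ for time $s$ and then $\tilde V$ for time $t-s$, and differentiate in $s$. Concretely, the plan is to introduce the auxiliary function
\begin{equation*}
\phi(s) := \tilde X\!\left(t-s,\, X(s,X\zero)\right), \qquad s\in[0,t],
\end{equation*}
and observe immediately that $\phi(0)=\tilde X(t,X\zero)$ while $\phi(t)=\tilde X(0,X(t,X\zero))=X(t,X\zero)$, so that $\tilde X(t,X\zero)-X(t,X\zero)=-\int_0^t \phi'(s)\,ds$. Both flows are assumed to exist on $[0,t]$ starting from $X\zero$, which justifies that $\phi$ is well-defined and $C^1$ there.

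Next I would compute $\phi'(s)$ by the chain rule, using $\partial_s X(s,X\zero)=V(X(s,X\zero))$ for the ``inner'' derivative and splitting the derivative of $\tilde X$ into a time-piece and a base-piece:
\begin{equation*}
\phi'(s) = -\bigl(\partial_\tau \tilde X\bigr)\!\left(t-s, X(s,X\zero)\right) + \frac{\partial \tilde X}{\partial X\zero}\!\left(t-s, X(s,X\zero)\right) V\!\left(X(s,X\zero)\right).
\end{equation*}
The key algebraic step is then to rewrite the time derivative $\partial_\tau \tilde X$ as a derivative with respect to the initial condition. This comes from the flow (semigroup) property $\tilde X(\tau_1+\tau_2,Y)=\tilde X(\tau_1,\tilde X(\tau_2,Y))$: differentiating in $\tau_2$ at $\tau_2=0$ gives
\begin{equation*}
\bigl(\partial_\tau \tilde X\bigr)(\tau,Y) = \tilde V\!\left(\tilde X(\tau,Y)\right) = \frac{\partial \tilde X}{\partial X\zero}(\tau,Y)\,\tilde V(Y).
\end{equation*}
Applying this at $\tau=t-s$, $Y=X(s,X\zero)$ lets me factor out $\partial \tilde X/\partial X\zero$:
\begin{equation*}
\phi'(s) = -\frac{\partial \tilde X}{\partial X\zero}\!\left(t-s, X(s,X\zero)\right)\bigl(\tilde V - V\bigr)\!\left(X(s,X\zero)\right).
\end{equation*}

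Finally, integrating this identity from $0$ to $t$ and using the boundary values of $\phi$ computed at the start yields exactly \eqref{L1}. No step is a real obstacle; the only thing to be careful about is the sign bookkeeping and the use of the flow identity to convert $\tilde V\!\circ\!\tilde X$ into $(\partial \tilde X/\partial X\zero)\,\tilde V$, which is what allows the differential $\partial \tilde X/\partial X\zero$ to appear as a common factor and produces the ``linear-looking'' integrand $(\tilde V - V)$ evaluated along the $V$-trajectory rather than the $\tilde V$-trajectory.
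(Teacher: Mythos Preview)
Your proof is correct and follows essentially the same approach as the paper: introduce the interpolant $\phi(s)=\tilde X(t-s,X(s,X\zero))$ (the paper calls it $F$), differentiate via the chain rule, and use the flow/semigroup identity to rewrite $\tilde V(\tilde X(\cdot))$ as $(\partial\tilde X/\partial X\zero)\tilde V$ so the Jacobian factors out. The only cosmetic difference is that you derive the key identity by differentiating $\tilde X(\tau_1+\tau_2,Y)=\tilde X(\tau_1,\tilde X(\tau_2,Y))$ in $\tau_2$, whereas the paper differentiates $\tilde X(T-\tau,\tilde X(\tau,Z))$ in $\tau$ at $\tau=0$; these are equivalent.
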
 
\begin{proof}
Set 
$$
F(s) = \tilde X\!\left(t-s,X(s,X\zero)\right).
$$
Then
\begin{align*}
F'(s)=& -\tilde V\!\left(\tilde X(t-s,X(s,X\zero))\right)\\ & + \frac{\partial \tilde X}{\partial X\zero} \!\left(t-s,X(s,X\zero)\right)V\!\left(X(s,X\zero)\right).
\end{align*}
The proof of the lemma would be complete by the fundamental theorem of calculus
$$
F(t)-F(0)=\int_0^t F'(s)\, ds
$$
 if we show the following
\begin{equation} \label{110a}
 \tilde V\!\left(\tilde X(t-s,X(s,X\zero))\right)
= \frac{\partial \tilde X}{\partial X\zero}\!\left(t-s,X(s,X\zero)\right)
\tilde V\!\left(X(s,X\zero)\right).
\end{equation}
Indeed, (\ref{110a}) follows from
$$
0=\left.\frac{d}{d\tau}\right|_{\tau=0} X(T-\tau,X(\tau,Z))
= -V(X(T,Z)) +\frac{\partial X}{\partial X\zero}(T,Z)
V(Z), \quad \forall T,
$$
after setting $T=t-s$, $Z= X(s,X\zero)$.
\end{proof} 

Let $c$, $\tilde c$ be two speeds. Then the corresponding metrics are $g= c^{-2} dx^2$, and  $\tilde g = \tilde c^{-2} dx^2$. The corresponding Hamiltonians and Hamiltonian vector fields are  
$$
H = \frac12 c^2g_0^{ij}\xi_i\xi_j , \qquad 
V = \left(c^2g_0^{-1} \xi, -\frac12\partial_x \left(c^2|\xi|_{g_0}^2\right)\right),
$$
and the same ones related to $\tilde c$. We used the notation  $|\xi|_{g_0}^2:= g_0^{ij}\xi_i\xi_j $.

We denote points in the phase space $T^*M$, in a fixed coordinate system, by $z = (x,\xi)$. We denote the bicharacteristic with initial point $z$ by $Z(t,z) = (X(t,z),\Xi(t,z))$. We can naturally think of the scattering relation $L$ and the travel time $\ell$ as functions on the cotangent bundle instead of the tangent one. Then we get the following.


\begin{prop}\label{pr1} 
Assume  
\begin{equation} \label{5a}
L(x_0,\xi^0) = \tilde L(x_0,\xi^0), \quad \ell(x_0,\xi^0) =\tilde \ell(x_0,\xi^0)
\end{equation}
for some $z_0= (x_0,\xi^0)\in \partial_-S^*M$. Then 
\begin{equation}\label{pseudo-linear}
\int_0^{ \ell(z_0) }  \frac{\partial\tilde Z}{\partial z} ( \ell(z_0)-s,Z 
(s,z_0) )\big(V -\tilde V\big)(Z (s,z_0))\,ds =0.
\end{equation}
\end{prop}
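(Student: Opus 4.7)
The proof should be a direct specialization of Lemma \ref{SU-identity}, applied on the phase space $T^*M$ with the Hamiltonian vector fields $V$ and $\tilde V$ of the two metrics $g$, $\tilde g$ in place of the generic $V,\tilde V$ of the lemma, and the bicharacteristic flows $Z(\cdot,z_0)$, $\tilde Z(\cdot,z_0)$ in place of the flows $X,\tilde X$.

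First I would instantiate the lemma at the initial point $z_0=(x_0,\xi^0)$ and at time $t=\ell(z_0)$. By the second half of the hypothesis \eqref{5a}, $\tilde\ell(z_0)=\ell(z_0)$, so both bicharacteristics are defined and remain inside the cotangent bundle of $M$ on $[0,\ell(z_0)]$, reaching $\partial M$ only at the common exit time. The lemma then produces
\begin{equation*}
\tilde Z(\ell(z_0),z_0)-Z(\ell(z_0),z_0)=\int_0^{\ell(z_0)}\frac{\partial \tilde Z}{\partial z}\bigl(\ell(z_0)-s,Z(s,z_0)\bigr)\bigl(\tilde V-V\bigr)\bigl(Z(s,z_0)\bigr)\,ds.
\end{equation*}

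Next I would identify the left-hand side with the difference of the scattering data. By definition of the scattering relation on the cotangent bundle, $L(z_0)=Z(\ell(z_0),z_0)$ and, using $\tilde\ell(z_0)=\ell(z_0)$, $\tilde L(z_0)=\tilde Z(\ell(z_0),z_0)$. The first half of \eqref{5a} then forces the left-hand side of the displayed identity to vanish, and multiplying the integrand by $-1$ yields exactly \eqref{pseudo-linear}.

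The only subtlety worth flagging is confirming that both bicharacteristics are simultaneously defined and stay in the phase space of $M$ on the whole interval $[0,\ell(z_0)]$, since this is required for Lemma \ref{SU-identity} to apply; this is exactly what $\ell(z_0)=\tilde\ell(z_0)<\infty$ encodes (the finiteness is implicit in treating $L(z_0)$ as a well-defined exit element). Once this is noted the argument is purely a matter of unwinding definitions, so I do not anticipate any substantive obstacle — the work of the proposition has really been done in Lemma \ref{SU-identity}, and Proposition \ref{pr1} is the statement that the two hypotheses in \eqref{5a} are exactly what is needed to turn that abstract flow comparison identity into an identity containing only the unknown $V-\tilde V$.
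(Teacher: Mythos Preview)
Your proposal is correct and is exactly the argument the paper has in mind: the paper states Proposition~\ref{pr1} immediately after Lemma~\ref{SU-identity} with the phrase ``Then we get the following'' and no separate proof, i.e.\ it is obtained by specializing the lemma to the Hamiltonian flows $Z,\tilde Z$ at $t=\ell(z_0)$ and using \eqref{5a} to kill the left-hand side. The only refinement I would make to your subtlety remark is that the lemma also requires $\tilde Z(\ell(z_0)-s,\cdot)$ to be defined at the intermediate points $Z(s,z_0)$, not just at $z_0$; this is handled in the usual way by extending the metrics smoothly past $\partial M$ so the flows are complete, which is implicit in the paper.
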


Introduce the exit times $\tau(x,\xi)$ defined as the minimal (and the only) $t> 0$ so that $X(t,x,\xi)\in\bo$. They are well defined near $S_p\bo$, if $\bo$ is strictly convex at $p$. 
We take the second $n$-dimensional component on \eqref{pseudo-linear} and use the fact that $c^2|\xi|_{g_0}^2=1$ on the bicharacteristics related to $c$ to get, with $f=c^2-\tilde c^2$,
\begin{equation} \label{3}
J_if(\gamma):= \int \left( A_i^j(X(t),\Xi(t))(\partial_{x^j}f)(X(t))+ B_i (X(t),\Xi(t))f(X(t)) \right)\d t=0
\end{equation}
for any bicharacteristic  $\gamma = (X(t),\Xi(t))$ (related to the speed $c$) in our set, where
\begin{equation} \label{4}
\begin{split}
A_i^j\left(x,\xi\right) = &-\frac12 \frac{\partial\tilde  \Xi_i}{\partial \xi_j}(\tau(x,\xi),(x,\xi)) c^{-2}(x),
\\
B_i\left(x,\xi\right) = &\frac{\partial \tilde\Xi_i}{\partial x^j}(\tau(x,\xi),(x,\xi))g_0^{ik}(x) \xi_k\\&  -\frac12 \frac{\partial\tilde  \Xi_i}{\partial \xi_j}(\tau(x,\xi),(x,\xi))  (\partial_{x^j}   g_0^{-1}(x))\xi\cdot\xi.
\end{split} 
\end{equation}

The arguments above lead to the following linear problem: 

\textbf{Problem.} 
Assume \r{3} holds with some $f$ supported in $M$, for all geodesics  close to the ones originating from $S^*_{x_0}\partial M$ (i.e. initial point $x_0$ and all unit initial co-directions tangent to $\bo$). Assume that $\bo$ is strictly convex at $x_0$ w.r.t.\ the speed $c$. 
 Is it true that $f=0$ near $x_0$? 

We show below that the answer is affirmative.

\subsubsection{Non-liner result for conformal metrics}

We continue by generalizing \eqref{3} to regard the functions
$\pa_{x_j}f$ and $f$ entering into it as independent unknowns, while
restricting the transform to the region of interest $\Omega=\Omega_\level$. So let $\tilde J_i$ be defined by
\begin{align*}
\tilde J_i &(u_0,u_1,\ldots,u_n)(\beta) \\&:= \int_{\gamma_\beta} \left( A_i^j(X(t),\Xi(t))u_j(X(t))+ B_i (X(t),\Xi(t))u_0(X(t)) \right)\d t,
\end{align*}
where $\gamma_\beta$ is the geodesic with lift to $S \Omega$ having
starting point $\beta\in S \Omega$. Let $\tilde J=(\tilde J_1,\ldots,\tilde J_n)$.
This is a vector valued version of the geodesic X-ray transform
considered in \cite{UV2}, and described above, sending functions on $\Omega$ with values in
$\Cx^{n+1}$ to functions with values in $\Cx^n$.


The following local invertibility result holds, the proof is in the spirit of section \ref{linear conformal}.

\begin{prop}\label{pr_3.2}
There is $\level_0>0$ such that for $0<\level<\level_0$, if
$f\in H^1(\Omega_\level)$ and $\tilde J (f,\pa_1 f,\ldots,\pa_n
f)=0$, then $f=0$.

In fact, for $\digamma>0$, $s\geq 1$, there exist $\level_0>0$,
$k$ and $\ep>0$ such that the following holds. For $\delta>0$
there is $C>0$ such that if
$0<\level<\level_0$, $\Gamma_\pm$ is $\ep$-close to
$\Gamma_\pm^0$ in $C^k$, $\tilde\foliation$ is $\ep$-close to
$\tilde\foliation_0$ in $C^k$, then
$$
\|f\|_{e^{(\digamma+\delta)/\foliation} H^s(\Omega_\level)}\leq
C\|\tilde J(f,\pa_1 f,\ldots,\pa_n f)\|_{e^{\digamma/\foliation} H^{s}(\cM_\level)}.
$$

Moreover, with $\Omega_{\level,\rho_0}=\Omegaext_\level\cap\{\rho\geq
\rho_0\}$, and $\cM_{\level,\rho_0}$ being defined analogously to
$\cM_\level$ with $\pa M=\{\rho=0\}$ being replaced by
$\{\rho=\rho_0\}$, we have: for $\digamma>0$ and $s\geq 1$ there exist $\level_0>0$, $\rho_0<0$,
$k$ and $\ep>0$ such that the following holds. For $\delta>0$
there is $C>0$ such that if
$0<\level<\level_0$, $\Gamma_\pm$ is $\ep$-close to
$\Gamma_\pm^0$ in $C^k$, $\tilde\foliation$ is $\ep$-close to
$\tilde\foliation_0$ in $C^k$, then $f\in
H^{s+1}(\Omega_{\level,\rho_0})$
implies that
$$
\|f\|_{e^{(\digamma+\delta)/\foliation} H^s(\Omega_{\level,\rho_0})}\leq
C\|\tilde J(f,\pa_1 f,\ldots,\pa_n f)\|_{e^{\digamma/\foliation} H^{s}(\cM_{\level,\rho_0})}.
$$
\end{prop}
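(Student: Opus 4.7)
The plan is to adapt the microlocal normal operator strategy of Theorem~\ref{local geodesic function} and Proposition~\ref{prop:elliptic} to the vector-valued transform $\tilde J$, exploiting the special structure that substituting $u_0=f$, $u_j=\pa_j f$ reduces the $n+1$ unknowns to a single scalar $f$. Concretely, I would introduce a matrix-valued averaging operator of the form
$$
L v(z)=\foliation^{-2}\int \chi(\lambda/\foliation)\,W(z,\lambda,\omega)\,v(\gamma_{z,\lambda,\omega})\,d\lambda\,d\omega,
$$
with $\chi$ a Gaussian-type cutoff, $W$ a matrix-valued symbol, and then form
$$
A_\digamma f := e^{-\digamma/\foliation}L\tilde J\bigl(e^{\digamma/\foliation}f,\pa_1(e^{\digamma/\foliation}f),\ldots,\pa_n(e^{\digamma/\foliation}f)\bigr).
$$
The central claim is that with $W$ suitably chosen and $\digamma$ large, $A_\digamma$ is an elliptic element of Melrose's scattering calculus $\Psisc^{0,0}(\Omegaext_\level)$ (up to lower order), exactly as in Lemma~\ref{boundary elliptic} and Proposition~\ref{prop:elliptic}.

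I would carry out the ellipticity check in two stages. For the standard principal symbol, in a coordinate expansion around a covector $(z,\zeta)$ with $\zeta \ne 0$, the $\pa_j$ factors become multiplication by $i\zeta_j$ plus tangential contributions, and the symbol of $A_\digamma$ reduces to an integral of $|A_i^j(z,\hat Z)\hat Z_j|^2$-type quantities over $\hat Z\perp \zeta$, plus lower order $B_i$-contributions that vanish to leading order. Positive-definiteness of this principal symbol is inherited from the nondegeneracy of the Jacobi-field matrix $\pa \tilde\Xi/\pa\xi$ entering~\eqref{4}, which is the identity at $t=0$ and perturbs continuously; this matches the role played by the Hessian $\alpha$ in the scalar proof. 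For the boundary (Melrose) principal symbol, I would rerun the Gaussian computation that produced~\eqref{X-FT}--\eqref{Y-FT}: the $X$-Fourier transform of the boundary kernel is a Gaussian of the form $\digamma^{-1/2}\alpha^{1/2}|Y|^{2-n}e^{-\digamma^{-1}(\xi^2+\digamma^2)Q(Y,Y)/2}$ multiplied by the matrix factors coming from $A_i^j,B_i$ evaluated at $x=0$, and the $Y$-Fourier transform yields the elliptic lower bound $c\langle(\xi,\eta)\rangle^{-1}$ for $\digamma \geq \digamma_0$ sufficiently large (so that the lower-order $B_i$ contributions and the off-diagonal matrix perturbations can be absorbed).

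The main obstacle will be controlling the mismatch between the $n$-component output of $\tilde J$ and the one-component unknown $f$: one must design $W$ so that $W A_i^j \hat Z_j$ behaves as a scalar elliptic density on the solenoidal side. The cleanest way is to observe that at $t=0$, $\pa\tilde\Xi/\pa\xi = \Id$ and $\pa\tilde\Xi/\pa x = 0$, so that the leading contribution of $A_i^j$ at the boundary is the Kronecker $\delta_i^j$ times $-\tfrac12 c^{-2}$. Hence near the boundary, the scalar ellipticity computation from Proposition~\ref{prop:elliptic} applies almost verbatim with only $C^k$-small perturbations. The need for large $\digamma$ is precisely to absorb both these perturbations and the $B_i u_0$ zeroth-order terms, which is how $\digamma_0 > 0$ enters.

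Once ellipticity is in place, the Melrose parametrix construction gives
$$
\|f_\digamma\|_{H_\scl^s(\Omega_\level)}\leq C\bigl(\|A_\digamma f_\digamma\|_{H_\scl^s} + \|f_\digamma\|_{H_\scl^{s-1}}\bigr),
$$
and the error term is absorbed by choosing $\level_0$ small enough, since the Schwartz kernel of the remainder is supported in a region of diameter $O(\level)$ in the scattering sense. Translating back from $f_\digamma = e^{-\digamma/\foliation}f$ to weighted spaces produces the stated estimate in $e^{(\digamma+\delta)/\foliation}H^s$, the extra $\delta$ accommodating the fact that $L$ and multiplication by $e^{\digamma/\foliation}$ do not exactly commute. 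Uniformity in $\level$ and under $C^k$-perturbations of $\Gamma_\pm$ and $\tilde\foliation$ follows because every step of the symbol computation and parametrix construction depends continuously on the geometric data. The $\rho_0$-version is obtained by the same argument applied in the foliated strip $\{\rho_0 \le \rho, \; -\level < \tilde\foliation\}$, exploiting the uniformity in the foliation parameter as noted in Theorem~\ref{thm:local-linear-intro}, together with the extra derivative captured by $H^{s+1}(\Omega_{\level,\rho_0})$ to compensate for the cutoff at $\rho=\rho_0$.
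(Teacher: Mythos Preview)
Your proposal is correct and follows exactly the approach the paper indicates: the paper itself gives no detailed proof of this proposition, stating only that ``the proof is in the spirit of section~\ref{linear conformal},'' and your outline is a faithful elaboration of what that entails---conjugation by $e^{\digamma/\foliation}$, ellipticity of the resulting scattering operator (both the standard and boundary principal symbols), parametrix construction, and absorption of the remainder by shrinking $\level$. Your observation that $\pa\tilde\Xi/\pa\xi=\Id$ and $\pa\tilde\Xi/\pa x=0$ at $t=0$, so that $A_i^j$ is a small perturbation of $-\tfrac12 c^{-2}\delta_i^j$ on short geodesics, is precisely the mechanism by which the vector-valued transform $\tilde J$ reduces to the scalar situation of Theorem~\ref{local geodesic function}, and is the key point in the original source~\cite{SUV_localrigidity} to which this survey defers.
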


Based on the pseudolinearization process, Proposition \ref{pr_3.2} implies the following results for the non-linear problem in a fixed conformal class.

\begin{thm}\label{thm_1}Let $n=\dim M\ge3$,  let $c>0$, $\tilde c>0$ be smooth  and let  $\bo$ be strictly convex with respect to both $g=c^{-2}g_0$ and $\tilde g = \tilde c^{-2}g_0$ near a fixed $p\in\bo$. Let $d(p_1,p_2)= \tilde d(p_1,p_2)$ for $p_1$, $p_2$ on $\bo$ near $p$. Then $c=\tilde c$ in $M$ near $p$.
\end{thm}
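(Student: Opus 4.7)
The plan is to reduce the nonlinear problem to the local injectivity result Proposition~\ref{pr_3.2} via the pseudolinearization identity of Lemma~\ref{SU-identity}/Proposition~\ref{pr1}, and then read off $c=\tilde c$ from the vanishing of $f:=c^2-\tilde c^2$.

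First I would convert the equality of boundary distances into equality of the scattering data. Since $\bo$ is strictly convex at $p$ with respect to both $g$ and $\tilde g$, Michel's observation (see the paragraph preceding Theorem~\ref{thm_1}) shows that knowing $d=\tilde d$ near $(p,p)$ is equivalent to knowing $L=\tilde L$, $\ell=\tilde\ell$ on a neighborhood $\mathcal U\subset \partial_-S^*M$ of $S_p^*\bo$. Simultaneously, I would invoke the boundary determination theorem (Theorem~\ref{thm_jet}) applied to the conformal setting to conclude that $c$ and $\tilde c$ have the same full jet at $\bo$ near $p$; thus $f=c^2-\tilde c^2$ vanishes to infinite order on $\bo$ near $p$, and after cutting off can be treated as a function on $M$ whose relevant boundary traces vanish where we need them.

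Next, for every $z_0\in\mathcal U$, Proposition~\ref{pr1} yields the pseudolinearization identity \eqref{pseudo-linear}. Taking the second $n$-dimensional component and using $c^2|\xi|_{g_0}^2=1$ along the $c$-bicharacteristic through $z_0$, exactly as in the derivation of \eqref{3}, gives the weighted geodesic transform identity
\begin{equation*}
J_i f(\gamma) = \int_\gamma \bigl( A_i^{\,j}(X,\Xi)\,\partial_{x^j}f(X) + B_i(X,\Xi)\,f(X) \bigr)\,dt = 0
\end{equation*}
for every $c$-geodesic $\gamma$ starting from $\mathcal U$, with $A_i^{\,j}$ and $B_i$ given by \eqref{4}. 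Regarding $f$ and its partial derivatives $\partial_1f,\ldots,\partial_n f$ as independent components of a $\Cx^{n+1}$-valued unknown, this is precisely the statement that the vector-valued transform $\tilde J(f,\partial_1 f,\ldots,\partial_n f)$ vanishes on the corresponding set of $c$-geodesics.

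Now I would choose, using strict $g$-convexity of $\bo$ at $p$, a defining function $\tilde\foliation$ of a neighborhood of $p$ whose level sets are strictly concave from the superlevel side with respect to the $c$-geodesic flow, and form the lens-shaped region $\Omega_\level=\{\tilde\foliation>-\level\}\cap M$ for small $\level>0$. For $\level$ small, every $\Omega_\level$-local $c$-geodesic is close to $S_p^*\bo$, hence lies in the set where \eqref{3} has been established, and $f$ together with all $\partial_j f$ lies in the exponentially weighted space of Proposition~\ref{pr_3.2} thanks to the boundary vanishing from step one. Applying Proposition~\ref{pr_3.2} with $u_0=f$, $u_j=\partial_j f$, the stability estimate
\begin{equation*}
\|f\|_{e^{(\digamma+\delta)/\foliation}H^s(\Omega_\level)} \leq C\,\|\tilde J(f,\partial_1 f,\ldots,\partial_n f)\|_{e^{\digamma/\foliation}H^s(\cM_\level)}=0
\end{equation*}
forces $f\equiv 0$ on $\Omega_\level$, and hence $c=\tilde c$ on a neighborhood of $p$ in $M$, as $c,\tilde c>0$.

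The main obstacle, and where care is needed, is the interface between the pseudolinearization (which naturally produces a \emph{weighted} vector-valued X-ray transform whose weights depend on the unknown metric $\tilde g$ through the Jacobians $\partial\tilde\Xi/\partial\xi$ and $\partial\tilde\Xi/\partial x$) and the local invertibility result of Proposition~\ref{pr_3.2}: one must verify that the operator $\tilde J$ fits into the class covered by that proposition and that the boundary determination step genuinely puts $f$ into the relevant exponentially weighted Sobolev space. Once this compatibility is in hand, the rest is essentially book-keeping.
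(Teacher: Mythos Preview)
Your proposal is correct and follows essentially the same route as the paper: convert the boundary distance hypothesis into equality of the local scattering/lens data via Michel's observation, apply the pseudolinearization identity (Lemma~\ref{SU-identity}/Proposition~\ref{pr1}) to obtain the vector-valued weighted transform identity \eqref{3}, and then invoke Proposition~\ref{pr_3.2} to conclude $f=c^2-\tilde c^2\equiv 0$ near $p$. This is exactly what the paper indicates when it says ``Based on the pseudolinearization process, Proposition~\ref{pr_3.2} implies the following results for the non-linear problem in a fixed conformal class.''

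One small remark: for the bare uniqueness statement of Theorem~\ref{thm_1} you do not actually need to pass through the exponentially weighted norms; the first clause of Proposition~\ref{pr_3.2} (injectivity for $f\in H^1(\Omega_\level)$) already suffices, since $c,\tilde c$ are smooth and $\Omega_\level$ is compact. The boundary-determination step and the weighted estimates become essential only when you want the quantitative stability of Theorem~\ref{thm_stability}. Your identification of the main technical obstacle---checking that the $\tilde g$-dependent weights $A_i^j,B_i$ in \eqref{4} genuinely place $\tilde J$ in the class to which Proposition~\ref{pr_3.2} applies---is exactly the point where the full proof in \cite{SUV_localrigidity} does nontrivial work (notably, it requires extending $\tilde c$ smoothly across $\partial M$ so that $\tau(x,\xi)$ and the Jacobians $\partial\tilde\Xi/\partial z$ are well defined and smooth on the relevant neighborhood).
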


This is the only known result for the boundary rigidity problem with partial data except in the case that the metrics are assumed to be real-analytic \cite{LSU}. The latter follows from determination of the jet of the metric at a convex point from the distance function known near $p.$

We have an immediate corollary of our main result for the lens rigidity problem. To reduce this problem to Theorem~\ref{thm_1} directly, we need to assume first that $c=\tilde c$ on $\bo$ near $p$ to make the definition of $\partial_\pm SM$ independent of the choice of the speed but in fact, one can redefine the lens relation in a way to remove that assumption, see \cite{StU5}.

\begin{thm}\label{corollary_1}
Let $M$, $c$, $\tilde c$ be as in Theorem~\ref{thm_1} with $c=\tilde c$ on $\bo$ near $p$. Let $L=\tilde L$ near $S_p\bo$. Then $c=\tilde c$ in $M$ near $p$. 
\end{thm}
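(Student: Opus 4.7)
The plan is to reduce Theorem~\ref{corollary_1} to Theorem~\ref{thm_1}: I will show that, in the presence of $c=\tilde c$ on $\bo$ near $p$ and strict convexity of $\bo$ at $p$, the hypothesis $L=\tilde L$ near $S_p\bo$ forces the two boundary distance functions $d$ and $\tilde d$ to agree on a neighborhood of $(p,p)$ in $\bo\times \bo$. Once that is proven, Theorem~\ref{thm_1} immediately yields $c=\tilde c$ in $M$ near $p$.

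To pass from $L$ to $d$ locally, I would first use strict convexity at $p$ to conclude that, for $p_1,p_2\in \bo$ sufficiently close to $p$, there is a unique length-minimizing geodesic $\gamma_{p_1,p_2}$ joining them, which lies (apart from its endpoints) in $M^{\mathrm{int}}$, has short length, and whose initial unit covector $\xi\in \partial_-S^*_{p_1}M$ approaches $T^*_p\bo$ as $(p_1,p_2)\to(p,p)$. Since $c=\tilde c$ on $\bo$ near $p$, the unit covector bundles for $g$ and $\tilde g$ over $\bo$ near $p$ coincide, so that $(p_1,\xi)$ lies in a neighborhood of $S_p\bo$ where $L=\tilde L$; in particular the exit point $p_2$ and exit covector $\eta$ of $\gamma_{p_1,p_2}$ are the same under both metrics. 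The second step is Michel's classical identity relating tangential derivatives of the boundary distance function to the endpoint data of the minimizing geodesic: for a strictly convex boundary one has
\begin{equation*}
\nabla^{\bo}_{p_1}d(p_1,p_2)=-\xi^\top,\qquad \nabla^{\bo}_{p_2}d(p_1,p_2)=\eta^\top,
\end{equation*}
where $(\cdot)^\top$ is the tangential projection along $\bo$. Since $L$ determines exactly the map $(p_1,\xi)\mapsto(p_2,\eta)$, the equality $L=\tilde L$ forces $\nabla d=\nabla\tilde d$ on a neighborhood of the diagonal in $(\bo)^2$, and the vanishing of both distance functions on the diagonal $p_1=p_2$ then yields $d=\tilde d$ near $(p,p)$.

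The main obstacle is ensuring that, for all $(p_1,p_2)$ in a uniform neighborhood of $(p,p)$, the minimizing geodesic $\gamma_{p_1,p_2}$ is genuinely captured by the prescribed piece of the lens relation, i.e.\ that its initial lift lands in the glancing region near $S_p\bo$ where $L=\tilde L$ is assumed; this requires the uniformity of strict convexity near $p$ together with the boundary agreement $c=\tilde c$ (used to identify the relevant unit bundles and, via the first fundamental form, the tangential projection). With this matching in place, Theorem~\ref{thm_1} applied with boundary data $d=\tilde d$ near $(p,p)$ completes the proof.
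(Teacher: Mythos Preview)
Your proposal is correct and follows the same route the paper indicates: the paper calls Theorem~\ref{corollary_1} an ``immediate corollary'' of Theorem~\ref{thm_1}, relying on the local equivalence (stated just before Theorem~\ref{thm_1}) between knowing $d$ near $(p,p)$ and knowing $L$ near $S_p\bo$, established via Michel's observation that the tangential gradients of $d$ give the tangential projections of the entry/exit covectors. Your write-up simply spells out that equivalence in more detail than the paper does.
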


We also prove  H\"older conditional stability estimates related to the uniqueness theorems above.  In case of data on the whole boundary, such an estimate was proved in  \cite[section~7]{StU4} for simple manifolds and metrics not necessarily conformal to each other.   Below, the $C^k$ norm is defined in a fixed coordinate system. The next theorem is a local stability result, corresponding to the local uniqueness result in Theorem~\ref{thm_1}. 

\begin{thm}\label{thm_stability}
There exists $k>0$ and $0<\mu<1$ with the following property. For any  $0<c_0\in C^k(M)$, $p\in\bo$, and $A>0$, there exists $\eps_0>0$ and $C>0$ with the property that for any two positive $c$, $\tilde c$ with 
\begin{equation} \label{est3}
\|c-c_0\|_{C^2} +\|\tilde c-c_0\|_{C^2} \le \eps_0, \quad \text{and} \quad \|c\|_{C^k}+ \|\tilde c\|_{C^k}\le A, 
\end{equation}
and for any neighborhood $\Gamma$ of $p$ on $\bo$, 
we have the stability estimate
\begin{equation} \label{stab}
\|c-\tilde c\|_{C^2(U)}\le C\|d-\tilde d\|_{C(\Gamma\times\Gamma)}^\mu
\end{equation}
for some neighborhood $U$ of $p$ in $M$.  
\end{thm}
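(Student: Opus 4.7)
The plan is to combine the pseudolinearization identity of Proposition~\ref{pr1}, the quantitative local invertibility of the vector-valued transform $\tilde J$ from Proposition~\ref{pr_3.2}, and two interpolation inequalities that convert Sobolev and weighted estimates into H\"older ones. Throughout, set $f:=c^2-\tilde c^2$; all constants below depend only on $A$, $c_0$, $p$ and $k$.

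First I would rerun the derivation of \eqref{3}, but retain the boundary discrepancy $\tilde Z(\ell(z_0),z_0)-Z(\ell(z_0),z_0)$ as a source term rather than setting it to zero in \eqref{pseudo-linear}. This produces
\[
\tilde J_i(f,\pa_1 f,\dots,\pa_n f)(\gamma) = E_i(\gamma),
\]
where pointwise $|E_i(\gamma)|\leq C(|L-\tilde L|+|\ell-\tilde\ell|)$ at the corresponding endpoints. Near the strictly convex point $p$, the tangential gradient identities $\nabla_y d|_{\bo\times\bo}=-\xi^\top$ and the implicit function theorem applied to the exit-time map provide the quantitative local reduction of lens data to boundary distance: uniformly under \eqref{est3},
\[
\|L-\tilde L\|_{C^{s}}+\|\ell-\tilde\ell\|_{C^{s}}\leq C\|d-\tilde d\|_{C^{s+1}(\Gamma\times\Gamma)}.
\]
Combined with smoothness of the coefficients in \eqref{4}, this gives
\[
\|\tilde J(f,\pa_1 f,\dots,\pa_n f)\|_{e^{\digamma/\foliation}H^s(\cM_\level)}\leq C\|d-\tilde d\|_{C^{s+2}(\Gamma\times\Gamma)}.
\]

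Second I would invoke Proposition~\ref{pr_3.2}: the $C^2$-closeness in \eqref{est3} is exactly what guarantees the perturbative $\eps$-closeness needed there relative to the background $c_0$. Fixing $s\geq 1$, $\digamma>0$ and $\delta>0$ as in that proposition, and removing the exponential weight on a compact set $U\Subset\Omega_\level\cap\{\foliation>0\}$ that contains a neighborhood of $p$, I obtain
\[
\|f\|_{H^s(U)}\leq C\|d-\tilde d\|_{C^{s+2}(\Gamma\times\Gamma)}.
\]
To close, I interpolate on both sides. Using a uniform $C^{k''}(\Gamma\times\Gamma)$ bound on $d,\tilde d$ (derived from the $C^k$ bound on $c,\tilde c$ via boundary regularity of the eikonal equation at the convex point), H\"older-space interpolation gives
\[
\|d-\tilde d\|_{C^{s+2}(\Gamma\times\Gamma)}\leq C\|d-\tilde d\|_{C(\Gamma\times\Gamma)}^{\mu_1}
\]
for some $\mu_1=\mu_1(k,s)\in(0,1)$. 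On the interior, Gagliardo--Nirenberg against the $C^k$ bound in \eqref{est3} gives
\[
\|f\|_{C^2(U)}\leq C\|f\|_{C^k(U)}^{1-\mu_2}\|f\|_{H^s(U)}^{\mu_2}
\]
for some $\mu_2\in(0,1)$, provided $s$ is chosen large enough relative to $n$ and $k$. Composing these three estimates and passing from $f$ back to $c-\tilde c$ via the uniform positive lower bound on $c+\tilde c$ yields \eqref{stab} with $\mu=\mu_1\mu_2$.

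The main obstacle is the first step: quantifying the pseudolinearization identity uniformly in \eqref{est3} with the source term $E_i$, and upgrading the pointwise bound $|E_i|\lesssim |L-\tilde L|+|\ell-\tilde\ell|$ to the weighted Sobolev bound that Proposition~\ref{pr_3.2} requires as input. This requires stable local recovery of $(L,\ell)$ from $d$ near $p$ with controlled derivative loss, even as geodesics become nearly tangent to $\bo$; it is essentially a partial-data, quantitative analogue of the boundary jet determination in Theorem~\ref{thm_jet}, and it is here that the strict convexity of $\bo$ at $p$ is used crucially.
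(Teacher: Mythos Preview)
Your outline is correct and is precisely the route the paper (via \cite{SUV_localrigidity}) takes: feed the pseudolinearization identity of Proposition~\ref{pr1} with a nonzero right-hand side controlled by the lens/distance data, apply the linear stability estimate of Proposition~\ref{pr_3.2}, and then recover the H\"older exponent by interpolation against the a~priori $C^k$ bound in \eqref{est3}. You have also correctly isolated the one genuinely delicate point, namely the uniform control of the source term $E$ (and of the coefficients $A^j_i,\,B_i$ in \eqref{4}) in the weighted $H^s$ topology as the $\Omega$-local geodesics degenerate to tangency at $\pa M$; this is handled in \cite{SUV_localrigidity} exactly by the strict convexity at $p$ and the stable recovery of $(L,\ell)$ from $d$ near the diagonal.
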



\subsection{Global result under the foliation condition}\label{global foliation}

Above linear and non-linear partial data results have immediate applications in the global problems under some global geometric condition. We first give the definition of the geometric condition necessary for our global theorems.

\begin{Def}\label{def_1.1}
Let $(M,g)$ be a compact Riemannian manifold with boundary. We say that
$M$ satisfies the foliation condition by strictly convex hypersurfaces if 
 $M$ is equipped with a smooth function $\rho: {M}\to[0,\infty)$ which level sets $\Sigma_t=\rho^{-1}(t)$, $t<T$
 with some $T>0$ are strictly convex viewed from $\rho^{-1}((0,t))$ for $g$,   $d\rho$ is non-zero on these level sets,  and $\Sigma_0=\partial M$ and $M\setminus\cup_{t\in[0,T)}\Sigma_t$  has empty interior. 
\end{Def}

The global geometric condition that we are imposing is a natural analog of the condition
\begin{equation} \label{HWZ}
\frac{\partial}{\partial r}\frac{r}{c(r)}>0, 
\end{equation}
with $\frac{\partial}{\partial r} = \frac{x}{|x|}\cdot\partial_x$ the radial derivative, proposed by Herglotz \cite{Her} and
Wiechert and Zoeppritz \cite{WZ} for an isotropic radial sound speed $c(r)$.
In this case the geodesic spheres are strictly convex. 

In fact \cite[Sec. 6]{SUV_localrigidity} extends the Herglotz and Wiechert \& Zoeppritz results to not necessarily radial speeds $c(x)$ satisfying \eqref{HWZ}. Let $ B(0,R)$, $R>0$ be the ball in $\R^n$, $n\ge3$ centered at the origin with radius $R>0$. 
Let $0<c(x)$ be smooth in $B(0,R)$.

\begin{prop}\label{pr_HWZ}
The Herglotz and Wieckert \& Zoeppritz condition \r{HWZ} for $0<r=|x|\leq R$ is equivalent to the the condition that the Euclidean spheres $S_r=\{|x|=r\}$ are strictly convex in the metric $c^{-2} \d x^2$ for $0<r\le R$. 
\end{prop}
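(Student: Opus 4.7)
\medskip

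\noindent\textit{Proof plan.} The strategy is to compute the geodesic curvature of the Euclidean sphere $S_r=\{|x|=r\}$ directly in the conformal metric $g=c^{-2}\d x^2$, express it in terms of $c$ and its gradient, and observe that the resulting convexity inequality is precisely the Herglotz--Wiechert--Zoeppritz condition.

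First I would translate strict convexity of $S_r$ into the condition that for every unit-speed $g$-geodesic $\gamma$ with $\gamma(0)=x_0\in S_r$ and $\dot\gamma(0)$ tangent to $S_r$, the Euclidean radial coordinate $|\gamma(t)|$ attains a strict local minimum at $t=0$. Equivalently, writing $f(t)=\tfrac{1}{2}|\gamma(t)|^2$, I need $f''(0)>0$. Computing:
\begin{equation*}
f'(t)=\gamma(t)\cdot\dot\gamma(t), \qquad f''(t)=|\dot\gamma(t)|_E^2+\gamma(t)\cdot\ddot\gamma(t),
\end{equation*}
where $|\cdot|_E$ denotes the Euclidean norm, and at $t=0$ the tangency gives $\gamma(0)\cdot\dot\gamma(0)=0$.

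Next I would write down the geodesic equation for the conformally Euclidean metric $g=e^{2\phi}\d x^2$ with $\phi=-\log c$. The Christoffel symbols are
\begin{equation*}
\Gamma^{k}_{ij}=\delta^{k}_{i}\,\partial_{j}\phi+\delta^{k}_{j}\,\partial_{i}\phi-\delta_{ij}\,\partial^{k}\phi,
\end{equation*}
which yields
\begin{equation*}
\ddot\gamma=-2(\dot\gamma\cdot\nabla\phi)\,\dot\gamma+|\dot\gamma|_E^{2}\,\nabla\phi
=\frac{2(\dot\gamma\cdot\nabla c)}{c}\,\dot\gamma-c\,\nabla c.
\end{equation*}
Since $\gamma$ is unit speed in $g$, we have $|\dot\gamma|_E^{2}=c^{2}(\gamma)$. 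Substituting at $t=0$ and using $\gamma(0)\cdot\dot\gamma(0)=0$, I obtain
\begin{equation*}
f''(0)=c^{2}(x_0)-c(x_0)\bigl(x_0\cdot\nabla c(x_0)\bigr)=c(x_0)\bigl(c(x_0)-x_0\cdot\nabla c(x_0)\bigr).
\end{equation*}

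Finally, since the right-hand side is independent of the tangent direction $\dot\gamma(0)$, strict convexity of $S_r$ for all such $\dot\gamma$ is equivalent to $c(x_0)-x_0\cdot\nabla c(x_0)>0$ at every $x_0$ with $|x_0|=r$. Noting that $x_0\cdot\nabla c(x_0)=r\,\partial_r c(x_0)$ and
\begin{equation*}
\frac{\partial}{\partial r}\frac{r}{c(x)}=\frac{c(x)-r\,\partial_r c(x)}{c^{2}(x)},
\end{equation*}
this is exactly the Herglotz--Wiechert--Zoeppritz condition \eqref{HWZ}, completing the equivalence.

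The calculation is short and essentially routine; the only subtle point to get right is the sign convention for ``strictly convex as viewed from the interior,'' which here reduces to checking that $f(t)=\tfrac12|\gamma(t)|^2$ has a strict minimum (so that tangent geodesics curve outward into $\{|x|>r\}$, i.e.\ away from the sub-level set), and verifying that the resulting inequality is direction-independent so that one scalar condition suffices.
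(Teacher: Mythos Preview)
The paper does not actually include a proof of this proposition; it is stated without argument, with the computation implicitly deferred to \cite[Sec.~6]{SUV_localrigidity}. Your proof is correct and is the standard direct computation: write the second derivative of $r^2/2$ along a tangent $g$-geodesic using the conformal Christoffel symbols, and observe that the resulting sign condition is exactly $\partial_r(r/c)>0$. The only point worth double-checking---and you flagged it yourself---is the convexity convention: in the foliation condition (Definition~\ref{def_1.1}) the level sets $\Sigma_t$ are required to be strictly convex as viewed from the outer region $\rho^{-1}((0,t))$, which for the spheres $S_r$ means tangent geodesics must curve into $\{|x|>r\}$, i.e.\ $f''(0)>0$, exactly as you have it.
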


Other examples of non-simple metrics that satisfy the foliation condition are the tubular neighborhood of a
closed geodesic in negative curvature. These have trapped geodesics. Also the rotationally symmetric spaces
on the ball with convex spheres can be far from simple. It follows from Lemma 1 of \cite{Eber}
that a simply connected manifold which has one point such that any geodesic emanating from that point is free of focal point satisfies the foliation condition. Such foliation condition also holds on complete noncompact manifolds with positive curvature \cite{GW}. It would be
interesting to know whether this is also the case for simple manifolds. As it was mentioned earlier manifolds
satisfying the foliation condition are not necessarily simple.
In \cite{SU4}, \cite{HU} one can find a microlocal study of the geodesic X-ray transform with fold caustics.
A similar condition of foliating by convex hypersurfaces was used in \cite{StU6} to 
satisfy the pseudoconvexity condition needed for Carleman estimates. See \cite{UV2, SUV_localrigidity, SUV} for more details about the foliation condition.


\subsubsection{Global linear result}

The global linearized problem, tensor tomography problem, has been extensively studied in the literature for both simple and non-simple manifolds
\cite{Mu, Dairbekov, PSU, PSU_1, PS, SSU, Vladimir_97, Sh2, StU2, StU4a, SU4, Bao-Zhang, PZ15}. See the book \cite{Sh} and  \cite{PSU3} for a recent survey.
In this section, we consider the global invertibility of the geodesic ray transform on manifolds satisfying the foliation condition. Assume $n=$ dim $M\geq 3$.

\begin{thm}
For $X$ and $\rho$ as above, if $X\setminus\cup_{t\in[0,T)}\Sigma_t$ has $0$ measure, the global geodesic transform is injective on $L^2(X)$, while if it has empty interior, the global geodesic transform is injective on $H^s(X)$, $s>n/2$.
\end{thm}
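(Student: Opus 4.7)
The plan is to run a layer-stripping argument along the foliation $\{\Sigma_t\}_{t\in[0,T)}$, applying Theorem~\ref{local geodesic function} at each level. The key preliminary observation is: for any unit-speed geodesic $\gamma$ of $g$, the function $\phi(s)=\rho(\gamma(s))$ is critical exactly when $\gamma'(s)$ is tangent to $\Sigma_{\phi(s)}$ at $\gamma(s)$, and at every such critical point the strict convexity of $\{\rho>\phi(s)\}$ forces $\phi''(s)<0$. Hence $\phi$ admits at most one critical point (a strict local max), so every geodesic meets each $\Sigma_t$ in at most two points and meets $M_t:=\{\rho\geq t\}$ in at most one connected segment.

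Given $f\in L^2(X)$ (or $H^s(X)$ with $s>n/2$) satisfying $If\equiv0$, we begin at the boundary. Since $\Sigma_0=\partial X$ is strictly convex, Theorem~\ref{local geodesic function} applied at each $p\in\partial X$ produces a neighborhood $O_p\subset\overline X$ with $f=0$ on $O_p$: the $O_p$-local geodesics are by definition maximal geodesic segments of $X$ whose image lies in $O_p$, so the local transform agrees with the global one and vanishes. Compactness of $\partial X$ then gives some $\varepsilon>0$ with $f=0$ a.e.\ on $\{\rho<\varepsilon\}$.

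Set $T^*=\sup\{t\in[0,T): f=0\text{ a.e.\ on }\{\rho<t\}\}\geq\varepsilon$. If $T^*<T$, fix $p\in\Sigma_{T^*}$ and apply Theorem~\ref{local geodesic function} to the domain $M_{T^*}$ (whose boundary $\Sigma_{T^*}$ is strictly convex) inside a slight ambient extension, obtaining a neighborhood $O_p\subset\overline{M_{T^*}}$ of $p$. For any $O_p$-local geodesic $\gamma_{\mathrm{loc}}$ of $M_{T^*}$, its maximal extension $\tilde\gamma$ in $X$ satisfies $\tilde\gamma\cap M_{T^*}=\gamma_{\mathrm{loc}}$ by the preliminary observation, so the remainder of $\tilde\gamma$ lies in $\{\rho<T^*\}$ where $f$ vanishes. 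Thus $\int_{\gamma_{\mathrm{loc}}}f=\int_{\tilde\gamma}f=0$, and Theorem~\ref{local geodesic function} yields $f=0$ on $O_p$. Covering the compact set $\Sigma_{T^*}$ by finitely many such neighborhoods produces some $\varepsilon'>0$ with $f=0$ a.e.\ on $\{\rho<T^*+\varepsilon'\}$, contradicting maximality of $T^*$. Hence $T^*=T$, so $f=0$ a.e.\ on $\bigcup_{t\in[0,T)}\Sigma_t$. In the $L^2$ case the complement has zero measure, so $f=0$ in $L^2(X)$; in the $H^s$ case with $s>n/2$, $f$ is continuous by Sobolev embedding and the vanishing set is dense, so $f\equiv0$.

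The main obstacle is ensuring that the extension $\tilde\gamma$ of a local geodesic does not re-enter $M_{T^*}$, which would break the identification of local with global transform data; this is precisely what the strict-convexity-of-foliation calculation delivers. A secondary issue is uniformity in $p$ of the neighborhoods $O_p$ as $p$ varies over the compact hypersurface $\Sigma_{T^*}$, and uniformity as $T^*$ varies in a compact range; both are supplied by the uniformity clause in the remark following Theorem~\ref{local geodesic function}.
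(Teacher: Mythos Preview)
Your proposal is correct and follows essentially the same layer-stripping argument as the paper. The paper's version is slightly more economical: rather than covering all of $\Sigma_{T^*}$ by local neighborhoods, it takes a single point $q\in\Sigma_\tau\cap\supp f$ (with $\tau=\inf_{\supp f}\rho$, which equals your $T^*$) and derives the contradiction at $q$ alone; on the other hand, your explicit verification that the extended geodesic $\tilde\gamma$ cannot re-enter $M_{T^*}$ fills in a step the paper leaves implicit.
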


\begin{proof}
This global result is an immediate consequence of Theorem \ref{local geodesic function}. Indeed, if $If=0$ and $f\in H^s$, $s>n/2$, $f\neq 0$, then $\supp f$ has non-empty interior since $f$ is continuous by the Sobolev embedding, while if $f\in L^2$, $f\neq 0$, then $\supp f$ has non-zero measure. On the other hand, let $\tau=\inf_{\supp f}\rho$; if $\tau\geq T$ we are done, for then $\supp f\subset X\setminus\cup_{t\in[0,T)}\Sigma_t$. Thus, suppose $\tau<T$, so $f\equiv 0$ on $\Sigma_t$ for $t<\tau$, but there exists $q\in\Sigma_\tau\cap\supp f$ (since $\supp f$ is closed and $\overline{X}$ is compact). Now we use Theorem \ref{local geodesic function} on $\rho^{-1}(\tau,\infty)$ to conclude that a neighborhood of $q$ is disjoint from $\supp f$ to obtain a contradiction.
\end{proof}

In fact, in this global setting we can even take $\tilde x=-\rho$, and the uniformity of the constants in terms of $c$ and $\rho_0$, as stated in the remark after Theorem \ref{local geodesic function} directly yields that if $t<T$ then there exists $\delta=\delta_t>0$ such that if $c,\rho_0\in (t-\delta_t,t+\delta_t)$ then a stability estimate holds (with a reconstruction method!) for the region $\rho^{-1}([\rho_0,c))$. Now in general, for $T'<T$, one can take a finite open cover of $[0,T']$ by such intervals $(t_j',t_j'')$, $j=1,\ldots,k$ (with, possibly after some reindexing and dropping some intervals, $t_1'<0$, $t_k''>T'$, $t_j''\in(t_{j+1}',t_{j+1}'')$), and proceed inductively to recover $f$ on $\cup_{t\in[0,T']}\Sigma_t$ from its X-ray transform, starting with the outermost region. More precisely, first, using the theorem, one can recover the restriction of $f$ to $\rho^{-1}((-\infty,t_1''))$. Then one turns to the next interval, $(t_2',t_2'')$, and notes there is a reconstruction method for the restriction to $\rho^{-1}((t_2',t_2''))$ of functions $f_2$ supported in $\rho^{-1}((t_2',+\infty))$ (no support condition needed at the other end, $t_2''$). One applies this to $f_2=\phi_2 f$, where $\phi_2$ identically $1$ near $\rho^{-1}([t''_1,+\infty))$, supported in $\rho^{-1}((t_2',+\infty))$; since $f=(1-\phi_2)f+\phi_2 f$, and one has already recovered $(1-\phi_2)f$, one also knows the X-ray transform of $\phi_2 f$, and thus Theorem \ref{local geodesic function} is applicable. One then proceeds inductively, covering $\rho^{-1}([0,T'])$ in $k$ steps. This gives a {\em global} stability estimate, and indeed a reconstruction method doing a reconstruction layer by layer; that is, we have (in principle) developed a layer stripping algorithm for this problem.

We also remark that our approach is a completely new one to uniqueness for the global problem for the geodesic ray transform. The only method up to now, except in the real-analytic category \cite{StU_analytic}, has been the use of energy type equalities one introduced by Mukhometov \cite{Mu} and developed by several authors which are now called ``Pestov identities". 

\medskip

Similar global result holds for the geodesic ray transform of symmetric tensor fields. To state this, assume
that $\tilde x$ is a globally defined function with level sets $\Sigma_t$
which are strictly concave from the super-level set for $t\in (-T,0]$,
with $\tilde x\leq 0$ on the manifold with boundary $M$.
Then we have the following immediate consequence of Theorem \ref{thm:local-linear-intro-1} :

\begin{thm}\label{thm:global}
Suppose $M$ is compact.
The geodesic X-ray transform is injective and stable modulo potentials
on the
restriction of
one-forms and symmetric 2-tensors $f$ to $\tilde x^{-1}((-T,0])$ in
the following sense. For all $\tau>-T$ there is $v\in \dot H^1_\loc(\tilde
x^{-1}((\tau,0]))$ such that $f-d^sv\in L^2_\loc(\tilde
x^{-1}((\tau,0]))$ can be
stably recovered from $If$. Here for stability we assume that $s\geq
0$, $f$ is in an
$H^s$-space, the norm on $If$ is an
$H^s$-norm, while the norm for $v$ is an $H^{s-1}$-norm.
\end{thm}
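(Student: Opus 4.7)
The plan is to prove Theorem \ref{thm:global} by a layer-stripping argument in the same spirit as the scalar case described in the paragraphs immediately before it, with Theorem \ref{thm:local-linear-intro-1} taking the role played there by Theorem \ref{local geodesic function}. Fix $\tau > -T$. By the uniformity-in-$\tau$ clause of Theorem \ref{thm:local-linear-intro-1} together with the compactness of $[\tau, 0]$ and the strict concavity of the level sets of $\tilde{x}$ from the superlevel side throughout that range, one can pick a single $c > 0$ such that the local theorem applies with uniform constants on every band $\Omega_{t,c} = \{t > \tilde{x} > t-c\} \cap M$ for $t$ in a neighborhood of $[\tau, 0]$. Choose a finite decreasing sequence $0 = t_0 > t_1 > \cdots > t_N$ with consecutive gaps less than $c/2$ and with $t_N - c < \tau$; the bands $B_j := \Omega_{t_j, c}$ then form an overlapping cover of $\tilde{x}^{-1}((\tau, 0])$, with only the outermost band $B_0 = \{\tilde{x} > -c\} \cap M$ reaching $\partial M$.

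I would then proceed by induction on $j$, constructing $(u^{(j)}, v^{(j)})$ on $R_j := \bigcup_{i \le j} B_i$ with $f = u^{(j)} + d^s v^{(j)}$ there and with both factors stably recovered from $If$ (this stability of $v$ in addition to $u$ is slightly stronger than the stated Theorem \ref{thm:local-linear-intro-1}, but it falls out directly from the elliptic scattering inversion of $A_\digamma$ in Proposition \ref{prop:elliptic}: one inverts $A_\digamma$ to obtain $f_\digamma = e^{-\digamma/x} f$ in full, and then splits it into its solenoidal and potential parts). The base case $j = 0$ is a direct application of the local theorem on $B_0$. For the inductive step, apply the local theorem on $B_{j+1}$ to get $f|_{B_{j+1}} = \tilde{u}_{j+1} + d^s \tilde{v}_{j+1}$, pick a smooth cutoff $\chi_{j+1}$ equal to $1$ on $R_j$ outside the overlap $B_j \cap B_{j+1}$ and equal to $0$ on $B_{j+1} \setminus R_j$ outside that overlap, and set
\[
v^{(j+1)} := \chi_{j+1}\, v^{(j)} + (1 - \chi_{j+1})\, \tilde{v}_{j+1}, \qquad u^{(j+1)} := f - d^s v^{(j+1)}.
\]
A Leibniz computation using $d^s v^{(j)} = f - u^{(j)}$ and $d^s \tilde{v}_{j+1} = f - \tilde{u}_{j+1}$ in the overlap yields
\[
u^{(j+1)} = \chi_{j+1}\, u^{(j)} + (1 - \chi_{j+1})\, \tilde{u}_{j+1} - d\chi_{j+1} \odot (v^{(j)} - \tilde{v}_{j+1}),
\]
where $\odot$ denotes the symmetric tensor product; this is explicitly computable from the stably recovered quantities on the right, preserving the inductive hypothesis.

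The main obstacle is the gauge mismatch: $v^{(j)}$ and $\tilde{v}_{j+1}$ are produced in different $e^{-2\digamma/x}$-solenoidal gauges adapted to their respective layers, so their difference on the overlap is not small or structurally simple and must be tracked explicitly. The cutoff-based gluing above absorbs this mismatch into the lower-order correction $d\chi_{j+1} \odot (v^{(j)} - \tilde{v}_{j+1})$, which is harmless precisely because the local reconstruction stably recovers $v$ in addition to $u$. After $N$ iterations one obtains $v = v^{(N)}$ lying in $\dot{H}^1_{\mathrm{loc}}(\tilde{x}^{-1}((\tau, 0]))$ (vanishing on $\partial M$ by the boundary behavior of $v^{(0)}$, with possible blow-up only as $\tilde{x} \to \tau$ where the innermost band's potential contributes) and $u = u^{(N)}$ stably recovered from $If$ in $H^{s-1}_{\mathrm{loc}}$ with the familiar one-derivative loss. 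Stability composes across the $N+1$ layers with a finite total constant; uniformity of the overall constant on compact $\tau$-intervals follows from the uniformity clause of Theorem \ref{thm:local-linear-intro-1}.
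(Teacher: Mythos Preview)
Your layer-stripping strategy is exactly what the paper intends (it states Theorem~\ref{thm:global} as an immediate consequence of Theorem~\ref{thm:local-linear-intro-1}, implicitly referring to the scalar-case layer stripping spelled out just before), and the gluing of potentials via a cutoff is the right idea. There is, however, a real gap in the inductive step: to ``apply the local theorem on $B_{j+1}$'' you need as data the integrals of $f$ along $B_{j+1}$-local geodesics, i.e.\ geodesic segments with both endpoints on the interior hypersurface $\{\tilde x=t_{j+1}\}$. These are \emph{not} part of the given data $If$, which only records integrals over geodesics with endpoints on $\partial M$. In the scalar argument the paper bridges this by writing $f=(1-\phi)f+\phi f$, computing $I((1-\phi)f)$ from the already-recovered outer piece, and observing that $I(\phi f)$ on full geodesics coincides with the required local data because $\phi f$ is supported strictly inside the new level set. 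In the tensor case you have only recovered $u^{(j)}$, not $f$, on the outer region, so the analogue needs an extra step: either subtract $d^s$ of a cut-off version of $v^{(j)}$ (which leaves $If$ unchanged but produces a tensor equal to the known $u^{(j)}$ on $\{\tilde x>t_{j+1}\}$), or compute the outer-segment contributions directly from $u^{(j)}$ together with the values of $v^{(j)}$ on $\{\tilde x=t_{j+1}\}$ (which lies safely away from the blow-up at $\{\tilde x=t_j-c\}$ by your gap condition). Your proposal does neither; it simply asserts the local theorem can be applied on $B_{j+1}$.

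Relatedly, your justification for stably recovering $v$ --- ``invert $A_\digamma$ to obtain $f_\digamma=e^{-\digamma/x}f$ in full'' --- is not correct. By Proposition~\ref{prop:elliptic}, $A_\digamma=N_\digamma+d^s_\digamma M\delta^s_\digamma$, and its ellipticity gives control of the $e^{-2\digamma/x}$-\emph{solenoidal} part of $f$ (equivalently $u$) in terms of $If$, as in Theorem~\ref{thm:local-linear-intro}; it does not recover $f$ itself from $If$. The potential $v$ is determined only once $f$ is given (via $d^s v=f-u$ with the boundary condition), and its norm is controlled by that of $f$ together with $u$. This is compatible with the statement of Theorem~\ref{thm:global} (where $v$ is allowed to depend on $f$ and only $u=f-d^s v$ is asserted to be stably recovered from $If$), but not for the reason you give.
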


\begin{rem}
This theorem, combined with Theorem~2 in \cite{StU5} (with a minor change --- the no-conjugate condition there is only needed to guarantee a stability estimate, and we have it in our situation), implies a local, in terms of a perturbation of the metric, lens rigidity uniqueness result near metric satisfying the foliation condition. 
\end{rem} 




\subsubsection{Global non-linear result}

Now we use the layer stripping type argument to obtain a global result on lens rigidity problem which is different from Mukhometov's for simple manifolds.


\begin{thm}\label{global nonlinear} 
Let $n=\dim M\ge3$,  let $c>0$, $\tilde c>0$ be smooth  and equal on $\bo$,  let $\bo$  be strictly convex with respect to both $g=c^{-2}g_0$ and $\tilde g = \tilde c^{-2}g_0$. Assume that   $M$ can be foliated by strictly convex hypersurfaces for $g$. Then if  $L= \tilde L$ on $ \partial_-SM$, we have  $c=\tilde c$ in $M$.
\end{thm}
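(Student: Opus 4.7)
The plan is a layer-stripping induction along the foliation $\{\Sigma_t\}_{t\in[0,T)}$, using the local conformal lens rigidity result Theorem~\ref{corollary_1} at each leaf as the inductive step. Set
\[
\tau:=\sup\bigl\{t\in[0,T):\ c=\tilde c \text{ on } \rho^{-1}([0,t])\bigr\}.
\]
For the base step, at every $p\in\bo=\Sigma_0$ the hypotheses of Theorem~\ref{corollary_1} hold: $c=\tilde c$ on $\bo$, $\bo$ is strictly convex for both $g$ and $\tilde g$, and $L=\tilde L$ on $\partial_-SM$ \emph{a fortiori} holds near $S_p\bo$. This yields $c=\tilde c$ in a neighborhood of each boundary point; compactness of $\bo$ then gives $\tau>0$, and continuity extends equality to the closed set $\rho^{-1}([0,\tau])$. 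It suffices to prove $\tau=T$, for then $c=\tilde c$ on $\bigcup_{t<T}\Sigma_t$, whose complement in $M$ has empty interior by the foliation condition, and continuity finishes the proof.

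Assume for contradiction $\tau<T$ and set $M_\tau:=\rho^{-1}([\tau,+\infty))\cap M$. The foliation condition says $\Sigma_\tau$ is strictly convex viewed from $\rho<\tau$, i.e.\ from the exterior of $M_\tau$; hence $\Sigma_\tau$ is strictly convex as $\partial M_\tau$ for both metrics, which coincide on $\Sigma_\tau$. I will apply Theorem~\ref{corollary_1} at each $q\in\Sigma_\tau$ to the pair $(M_\tau,g|_{M_\tau})$, $(M_\tau,\tilde g|_{M_\tau})$; this requires agreement of the $M_\tau$-scattering relations near $S_q\Sigma_\tau$. A crucial observation is that the foliation condition implies $(\rho\circ\gamma)''<0$ at any critical point of $\rho\circ\gamma$ along a geodesic $\gamma$, so $\rho\circ\gamma$ admits at most one critical point, necessarily a maximum; consequently $M$ is non-trapping and each geodesic crosses each $\Sigma_t$ at most twice. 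Pick a near-tangent inward vector $(q',\eta')\in\partial_-SM_\tau$ with $q'$ close to $q$. Strict convexity of $\Sigma_\tau$ forces the $g$-arc in $M_\tau$ from $(q',\eta')$ to exit at some $(q'',\eta'')\in\Sigma_\tau$ close to $q$ after a short time; by monotonicity of $\rho\circ\gamma$ on either side of its unique maximum, the past portion of the full $g$-geodesic through $(q',\eta')$ reaches $\bo$ at a unique $(p_-,\zeta_-)\in\partial_-SM$, and the future portion continuing from $(q'',\eta'')$ through the collar $\rho<\tau$ reaches $\bo$ at a unique $(p_+,\zeta_+)\in\partial_+SM$. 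Since $g=\tilde g$ on $\rho\leq\tau$, the $\tilde g$-geodesic emanating from $(p_-,\zeta_-)$ enters $M_\tau$ at the very same $(q',\eta')$. The hypothesis $L=\tilde L$ then forces the $\tilde g$-geodesic to exit $M$ at $(p_+,\zeta_+)$; tracing backward from $(p_+,\zeta_+)$ through the collar by ODE uniqueness (the metrics agree there), the last crossing of $\Sigma_\tau$ before the exit is a unique point which, being the correct one for $g$, must coincide with $(q'',\eta'')$. Hence the $\tilde g$-arc in $M_\tau$ from $(q',\eta')$ also exits at $(q'',\eta'')$, so the $M_\tau$-scattering relations of $g$ and $\tilde g$ agree near $S_q\Sigma_\tau$. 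Theorem~\ref{corollary_1} then yields $c=\tilde c$ in a neighborhood of $q$ inside $M_\tau$; compactness of $\Sigma_\tau$ produces $\varepsilon'>0$ with $c=\tilde c$ on $\rho^{-1}([0,\tau+\varepsilon'])$, contradicting the definition of $\tau$.

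The principal obstacle is the transfer of lens equality from $\bo$ to the interior leaf $\Sigma_\tau$, since a general geodesic of $M$ may weave in and out of $M_\tau$ several times while the $M_\tau$-scattering relation records only one arc inside $M_\tau$ at a time. The rescue is twofold. First, Theorem~\ref{corollary_1} is genuinely local, requiring only agreement of lens data near the tangent sphere bundle at a single boundary point. Second, the foliation condition supplies the quasi-concavity of $\rho\circ\gamma$ along geodesics, which restricts near-tangent geodesics at $\Sigma_\tau$ to a single short excursion into $M_\tau$ flanked by monotone passages through the collar; combined with the known equality of metrics on the collar, this reduces the identification of the $M_\tau$-scattering relation from the given $\bo$-lens data to a uniqueness-of-ODE argument inside $\rho\leq\tau$.
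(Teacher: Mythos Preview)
Your proof is correct and follows essentially the same layer-stripping strategy as the paper: both reduce to showing that the scattering relations on the inner leaf $\Sigma_\tau$ agree, then invoke Theorem~\ref{corollary_1}. Your phrasing via the quasi-concavity of $\rho\circ\gamma$ (at most one critical point, a maximum) is exactly the paper's observation that a geodesic leaving $\Sigma_\tau$ into the collar cannot return, just made more explicit; and you are a bit more careful than the paper in tracing the $\tilde g$-geodesic backward from its exit point to pin down the last crossing of $\Sigma_\tau$.
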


\begin{proof}[Proof of Theorem \ref{global nonlinear}]
Theorem \ref{global nonlinear} is now an easy consequence of Theorem \ref{thm_1} using a layer stripping argument. Let $f=c^2-\tilde c^2$.
Assume $f\neq 0$, then $\supp f$ has non-empty
interior. On the other hand, let $\tau=\inf_{\supp f}\rho$; if $\tau= T$ we
are done, for then $\supp f\subset M\setminus\cup_{t\in[0,T)}\Sigma_t$. Thus, suppose $\tau<T$, so
$f\equiv 0$ on $\Sigma_t$ for $t<\tau$, but there exists $x\in\Sigma_\tau\cap\supp f$ (since $\supp f$ is closed).
We will show below how to use Theorem~\ref{corollary_1} on $M_\tau:= \rho^{-1}(\tau,\infty)$  to conclude that a neighborhood of $x$ is disjoint from $\supp f$ to obtain a contradiction. 

All we need to show is that the scattering relations $L_\tau$ and $\tilde L_\tau$ on $\Sigma_\tau$ coincide. Note that $\Sigma_\tau = \bo_\tau$ is strictly convex for $\tilde g$ as well because the second fundamental form for $\tilde g$ can be computed by taking derivatives from the exterior $\rho<\tau$, where $g=\tilde g$. Fix $(x_\tau,v_\tau)\in \partial_-SM_\tau$, see Figure~\ref{fig:local_lens_rigidity_pic2}.  The geodesic $\gamma_{x_\tau,v_\tau}(s)$  cannot hit $\Sigma_\tau$ again for negative ``times'' $s$ because otherwise, we would get a contradiction with the strict convexity at $\Sigma_t$, where $t$ corresponds to the smallest value of  $\rho$ on that geodesic between two contacts with $\Sigma_\tau$. 
Since $c=\tilde c$ outside $M_\tau$,  $\gamma_{x_\tau,v_\tau}(s)$  and $\tilde \gamma_{x_\tau,v_\tau}(s)$   coincide outside $M_\tau$ for  $s<0$. It is not difficult to see that this negative  geodesic ray must be non-trapping, i.e.,  $\gamma_{x_\tau,v_\tau}$ would hit $\bo$ for a finite negative time $s$  at some point and direction $(x,v)\in \partial_-SM$. In the same way, we show that the same holds for the positive part, $s>0$, of a geodesic issued from $L_\tau(x_\tau,y_\tau) =:(y_\tau,w_\tau)\in \partial_+SM_\tau$; and the corresponding point on $\partial_+SM$ will be denoted by $(y,w)$. Then, since $L(x,v)= (y,w)$,  we would also get $L_\tau(x_\tau,v_\tau) =  (y_\tau,w_\tau)= \tilde L_\tau(x_\tau,v_\tau)$. 
\begin{figure}[ht] 
  \centering
  \includegraphics[scale=0.75]{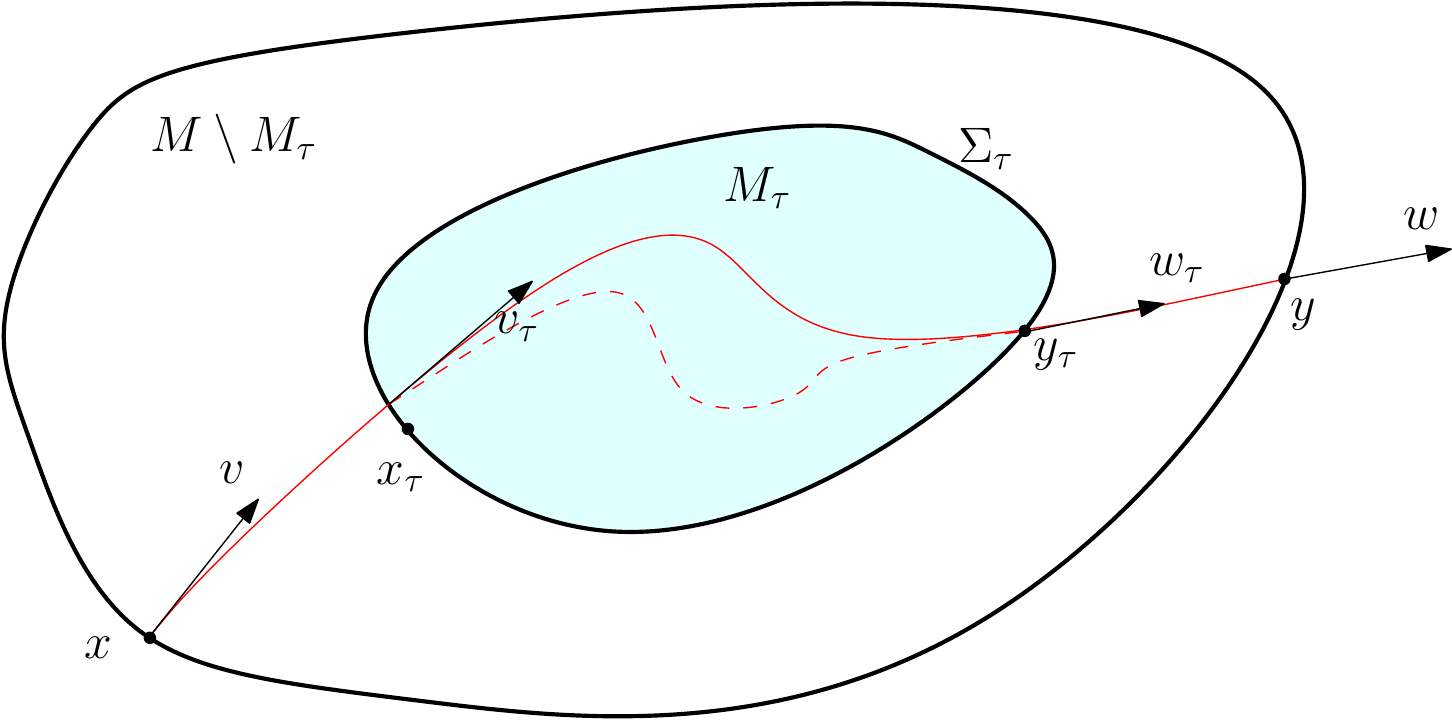}
\caption{One can recover the scattering relation on $\Sigma_\tau$  knowing that on $\bo$.}
  \label{fig:local_lens_rigidity_pic2}
\end{figure}
\end{proof}

A more general foliation condition under which the theorem would still hold is formulated in \cite{StU6}. In particular, $\Sigma_0$ does not need to be $\partial M$ and one can have several such foliations with the property that the  closure of their union is $M$. 


We have a H\"older conditional stability estimates  of global type as well, which can be considered as a ``stable version'' of Theorem~\ref{global nonlinear}. The $C^k$ norm below is defined in a fixed finite atlas of local coordinate charts.  In the same way we define $\dist(L,\tilde L)$ and its $C(D)$ norm: in any coordinate system we can just take the supremum of $L-\tilde L$ and then the maximum over all charts.  They can be defined in an invariant way, in principle but we do not do that for the sake of simplicity.




\begin{thm}\label{thm_stab_global}
Assume that $M_0\subset M$ can  be foliated by strictly convex hypersurfaces for $g=c^{-2}g_0$. Let $D\subset \partial_- SM$ be  a neighborhood of 
the compact set of all $\beta\in \partial_- SM\cap \partial_- SM_0$ consisting of the initial points of all geodesics $\gamma_\beta$ tangent to the intersections  of the strictly convex hypersurfaces with $M_0$. 
 Then with $k$, $\mu$, $c_0$, $c$, $\tilde c$, $\eps_0$ and $A$ as in Theorem~\ref{thm_stability}, we have the stability estimate
\begin{equation} \label{stab_global}
\|c-\tilde c\|_{C^2(M_0)}\le C\|\dist(L,\tilde L)\|_{C(D)}^\mu
\end{equation}
for $c$, $\tilde c$ satisfying \r{est3}. 
\end{thm}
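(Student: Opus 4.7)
The plan is to combine the local Hölder stability estimate of Theorem~\ref{thm_stability} with the layer-stripping argument that drives the uniqueness proof of Theorem~\ref{global nonlinear}, carefully tracking the Hölder exponents through a finite chain of steps.

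First, I would use compactness of $M_0$ together with the assumption that $\rho$ foliates it by strictly convex hypersurfaces to cover $\rho(M_0)\subset[0,T]$ by finitely many overlapping intervals $(t_j',t_j'')$, $j=1,\dots,N$, so that on each shell $U_j=\rho^{-1}([t_j',t_j''))\cap M_0$ the local estimate of Theorem~\ref{thm_stability} applies with uniform constants $\varepsilon_0$, $C$, $\mu$. Compactness of each slice $\Sigma_t\cap M_0$ then lets me cover it by finitely many local charts at convex points where Theorem~\ref{thm_stability} can be applied, the uniformity in constants coming from the bound $\|c_0\|_{C^k}\le A$.

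Then I would argue by induction on $j$. The base case $j=1$ is direct: since $D$ contains a neighborhood of the tangential vectors on $\partial M=\Sigma_0$, finitely many applications of Theorem~\ref{thm_stability} at points of $\partial M\cap\overline{U_1}$ yield $\|c-\tilde c\|_{C^2(U_1)}\le C\|\dist(L,\tilde L)\|_{C(D)}^{\mu}$. For the inductive step, suppose that on $\rho^{-1}([0,t_j''))\cap M_0$ we already have a bound by a power $\|\dist(L,\tilde L)\|_{C(D)}^{\mu^j}$. As in the proof of Theorem~\ref{global nonlinear}, every $(x_\tau,v_\tau)\in\partial_-SM_{t_j''}$ extends backwards along the $g$-geodesic to a point $(x,v)\in\partial_-SM$, the geodesic being non-trapping by strict convexity of the intermediate hypersurfaces; and similarly for the forward exit. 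Because the metrics $c^{-2}g_0$, $\tilde c^{-2}g_0$ coincide on $\{\rho\ge t_j''\}$ and differ on $\{\rho<t_j''\}$ by the already-controlled Hölder small amount, Gronwall applied to the geodesic ODE gives
\[
\|\dist(L_{t_j''},\tilde L_{t_j''})\|_{C(D_{t_j''})}\le C\bigl(\|\dist(L,\tilde L)\|_{C(D)}+\|c-\tilde c\|_{C^2(\rho^{-1}([0,t_j'']))}\bigr)\le C'\|\dist(L,\tilde L)\|_{C(D)}^{\mu^j},
\]
where $D_{t_j''}$ is the appropriate neighborhood on $\Sigma_{t_j''}$ of the tangential vectors. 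Feeding this into Theorem~\ref{thm_stability} at points of $\Sigma_{t_j''}$ yields $\|c-\tilde c\|_{C^2(U_{j+1})}\le C\|\dist(L,\tilde L)\|_{C(D)}^{\mu^{j+1}}$. After $N$ steps one takes the maximum over the shells and relabels $\mu^N\mapsto\mu$ to obtain \eqref{stab_global}.

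The main obstacle will be ensuring that the smallness hypothesis \eqref{est3} is preserved under the induction and that the relevant set $D$ encodes enough lens data to control $L_{t_j''}$ on a neighborhood large enough to invoke Theorem~\ref{thm_stability} at every interior slice; this is precisely the reason for the assumption in the statement that $D$ contains the initial points of all geodesics tangent to the intersections of the foliating hypersurfaces with $M_0$, so the tangential ``seed'' data propagates inward consistently. A secondary technical point is uniformity of the local constants as the base point of Theorem~\ref{thm_stability} moves along a compact piece of each $\Sigma_t$, which follows from the smoothness of the foliation and the $C^k$ bound on $c_0$.
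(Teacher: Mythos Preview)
The paper is a survey and does not actually supply a proof of Theorem~\ref{thm_stab_global}; it is stated as a ``stable version'' of Theorem~\ref{global nonlinear} with a reference to \cite{SUV_localrigidity} for details. Your outline is exactly the expected argument: iterate the local H\"older estimate of Theorem~\ref{thm_stability} through the same layer-stripping scheme that drives the proof of Theorem~\ref{global nonlinear}, accepting the degradation $\mu\mapsto\mu^N$ after $N$ layers. This is indeed how the original reference proceeds.

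One wording slip to fix: in your inductive step you write that the two metrics ``coincide on $\{\rho\ge t_j''\}$ and differ on $\{\rho<t_j''\}$ by the already-controlled H\"older small amount.'' The inner region $\{\rho\ge t_j''\}$ is precisely what has \emph{not} yet been controlled; the metrics certainly do not coincide there. What you mean, and what your displayed Gronwall estimate correctly encodes, is that the geodesic segments linking $\partial_-SM$ to $\partial_-SM_{t_j''}$ (and the analogous outgoing segments) lie entirely in the \emph{outer} region $\{\rho<t_j''\}$, where the inductive hypothesis controls $\|c-\tilde c\|_{C^2}$; hence the difference of the transition maps between the two lens relations is controlled there. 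The interior region plays no role in that comparison because $L_{t_j''}$ and $\tilde L_{t_j''}$ are black boxes at this stage. With that correction the argument is sound.
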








\end{document}